\numberwithin{equation}{section}  
\DeclareMathOperator*{\bbigwedge}{\text{\raisebox{0.25ex}{\scalebox{0.8}{$\bigwedge$}}}}
\begin{document}



\newcommand{\zxz}[4]{\begin{pmatrix} #1 & #2 \\ #3 & #4 \end{pmatrix}}
\newcommand{\abcd}{\zxz{a}{b}{c}{d}}
\newcommand{\kzxz}[4]{\left(\begin{smallmatrix} #1 & #2 \\ #3 &
#4\end{smallmatrix}\right) }
\newcommand{\kabcd}{\kzxz{a}{b}{c}{d}}




\newcommand{\A}{{\mathbb A}}
\newcommand{\C}{{\mathbb C}}
\newcommand{\F}{{\mathbb F}}
\newcommand{\G}{{\mathbb G}}
\newcommand{\R}{{\mathbb R}}
\newcommand{\Q}{{\mathbb Q}}
\newcommand{\X}{{\mathbb X}}
\newcommand{\Z}{{\mathbb Z}}
\newcommand{\HZ}{\widehat{\Z}}


\newcommand{\rom}[1]{\text{\rm #1}}
\renewcommand{\roman}{\rm}

\newcommand{\Aut}{\text{\rm Aut}}
\newcommand{\CH}{\widehat{\text{\rm CH}}}
\newcommand{\cha}{{\text{\rm char}}}
\newcommand{\CHe}{\text{\rm CHeeg}}
\newcommand{\degh}{\widehat{\text{\rm deg}}}
\newcommand{\degH}{\widehat{\text{\rm deg}}}    
\newcommand{\diag}{{\text{\rm diag}}}
\newcommand{\Diff}{\text{\rm Diff}}
\newcommand{\disc}{\text{\rm discr}}
\renewcommand{\div}{\text{\rm div}}
\newcommand{\divh}{\widehat{\text{\rm div}}}
\newcommand{\DS}{\text{\rm DS}}
\newcommand{\Ei}{\text{\rm Ei}}
\newcommand{\End}{\text{\rm End}}
\newcommand{\ev}{{\text{\rm ev}}}
\newcommand{\Gal}{\text{\rm Gal}}
\newcommand{\GL}{\text{\rm GL}}
\newcommand{\GSpin}{\text{\rm GSpin}}
\newcommand{\Hom}{\text{\rm Hom}}
\newcommand{\hor}{{\text{\rm horiz}}}
\newcommand{\id}{\text{\rm id}}
\newcommand{\im}{\text{\rm im}}
\renewcommand{\Im}{\text{\rm Im}}
\newcommand{\inv}{{\text{\rm inv}}}
\newcommand{\Jac}{\text{\rm Jac}}
\newcommand{\Leray}{{\mathrm L}}
\newcommand{\Lie}{\text{\rm Lie}}
\newcommand{\Mp}{\text{\rm Mp}}
\newcommand{\mult}{\text{\rm mult}}
\newcommand{\MW}{\text{\rm MW}}
\newcommand{\MWt}{\widetilde{\MW}}
\newcommand{\new}{\text{\rm new}}
\newcommand{\Nm}{\text{\rm Nm}}
\newcommand{\ord}{\text{\rm ord}}
\newcommand{\PGL}{\text{\rm PGL}}
\newcommand{\Pic}{\text{\rm Pic}}
\newcommand{\Pich}{\widehat{\text{\rm Pic}}}
\newcommand{\pr}{\text{\rm pr}}
\newcommand{\ra}{\text{\rm ra}}
\newcommand{\Rao}{\mathrm R}
\renewcommand{\Re}{\text{\rm Re}}
\newcommand{\sgn}{\text{\rm sgn}}
\newcommand{\sig}{\text{\rm sig}}
\newcommand{\SL}{\text{\rm SL}}
\newcommand{\SO}{\text{\rm SO}}
\newcommand{\Sp}{\text{\rm Sp}}
\newcommand{\Spec}{\text{\rm Spec}\, }
\newcommand{\Spf}{\text{\rm Spf}}
\newcommand{\supp}{\text{\rm supp}}
\newcommand{\Sym}{{\text{\rm Sym}}}
\newcommand{\tr}{\text{\rm tr}}
\newcommand{\type}{\text{\rm type}}
\newcommand{\Ver}{\text{\rm Vert}}
\newcommand{\vol}{\text{\rm vol}}
\newcommand{\Wald}{\text{\rm Wald}}


\newcommand{\Cal}{\mathcal}     

\newcommand{\AHH}{\hat{\Cal A}}   
\newcommand{\CHH}{\hat{\Cal C}}
\newcommand{\MM}{\Cal D}          
\newcommand{\MMb}{\MM^\bullet}
\newcommand{\ssplit}{\text{\bf split}}
\newcommand{\whcc}{\widehat{\Cal C}}
\newcommand{\CO}{\mathcal O}
\newcommand{\COH}{\widehat{\CO}}
\newcommand{\M}{\Cal M}
\newcommand{\OB}{\Cal O_B}
\newcommand{\XX}{\mathcal X}
\newcommand{\bXX}{\bar\XX}
\newcommand{\wc}{\hat{\Cal C}}
\newcommand{\wch}{\wc^{\text{\rm hor}}}
\newcommand{\ZZ}{\Cal Z}
\newcommand{\ZH}{\widehat{\Cal Z}}   
\newcommand{\Zh}{\widehat{\Cal Z}}
\newcommand{\ZZh}{\ZZ^{\text{\rm hor}}}
\newcommand{\ZZv}{\ZZ^{\text{\rm ver}}}
\newcommand{\ZZhh}{\Zh^{\text{\rm hor}}}
\newcommand{\ZZhv}{\Zh^{\text{\rm ver}}}


\newcommand{\nass}{\noalign{\smallskip}}
\newcommand{\snass}{\noalign{\vskip 2pt}}
\newcommand{\tent}[1]{ \vphantom{\vbox to #1pt{}} }   


\newcommand{\scr}{\scriptstyle}
\newcommand{\disp}{\displaystyle}

\font\cute=cmitt10 at 12pt
\font\smallcute=cmitt10 at 9pt
\newcommand{\kay}{{\text{\cute k}}}
\newcommand{\smallkay}{{\text{\smallcute k}}}

\renewcommand{\a}{\alpha}
\renewcommand{\b}{\beta}
\newcommand{\e}{\epsilon}
\renewcommand{\l}{\lambda}
\renewcommand{\L}{\Lambda}
\renewcommand{\o}{\omega}
\renewcommand{\O}{\Omega}
\renewcommand{\P}{\Phi}
\newcommand{\ph}{\varphi}
\newcommand{\phih}{\widehat{\phi}}
\newcommand{\wphi}{\widehat{\phi}}
\newcommand{\phit}{\widetilde{\phi}}
\newcommand{\s}{\sigma}
\newcommand{\vth}{\vartheta}


%

\newcommand{\Pt}{P}
\newcommand{\Ph}{\P}
\newcommand{\Pht}{\tilde \P}   
\newcommand{\Kt}{K}           
\newcommand{\Mt}{M}

\newcommand{\pht}{\widetilde{\phi}}
\newcommand{\It}{I}
\newcommand{\Jt}{\widetilde{J}}
\newcommand{\lt}{\widetilde{\l}}
\newcommand{\vp}{\varpi}

\newcommand{\bom}{{\boldsymbol{\o}}}
\newcommand{\hbom}{\widehat{\bom}}
\newcommand{\ff}{{\bold f}}
\newcommand{\fsp}{\boldsymbol{f}_{\!\rm sp}}
\newcommand{\fev}{\boldsymbol{f}_{\!\rm ev}}
\newcommand{\fb}{\boldsymbol{f}}
\newcommand{\J}{\und{J}'}
\newcommand{\JJ}{\bold J'}
\newcommand{\V}{\bold V}
\newcommand{\xx}{\bold x}

\newcommand{\g}{{\mathfrak g}}
\renewcommand{\H}{\mathfrak H}


\newcommand{\back}{\backslash}
\newcommand{\CT}[1]{\operatornamewithlimits{CT}_{#1}}
\renewcommand{\d}{\partial}
\newcommand{\db}{\bar\partial}
\newcommand{\dbar}{\bar{\partial}}
\newcommand{\gs}[2]{\langle \,#1,#2\,\rangle}
\newcommand{\Gt}{G}
\newcommand{\hfal}{h_{\text{\rm Fal}}}
\newcommand{\II}{\int^\bullet}
\newcommand{\isoarrow}{\ {\overset{\sim}{\longrightarrow}}\ }
\newcommand{\lisoarrow}{\ {\overset{\sim}{\longleftarrow}}\ }
\newcommand{\limdir}[1]{\underset{\underset{#1}{\rightarrow}}{\lim}}
\newcommand{\lan}{\operatorname{\langle}\hskip .5pt}
\newcommand{\ran}{\,\operatorname{\rangle}}
\newcommand{\lra}{\longrightarrow}
\newcommand{\doublelra}{\ {\overset{\scr\lra}{\scr\lra}}\ }
\newcommand{\nat}{\natural}
\newcommand{\notmid}{\mkern-5mu\not\mkern5mu\mid}
\newcommand{\Optoc}{\text{\rm Opt}(O_{c^2d},O_B)}
\newcommand{\psim}{\psi^{-}}
\newcommand{\qeq}{\ \overset{??}{=}\ }
\newcommand{\sh}{\sharp}
\newcommand{\thCH}{\theta^{\text{\rm ar}}}
\newcommand{\wht}{\widehat{\theta}}     
\newcommand{\triv}{1\!\!1}
\renewcommand{\tt}{\otimes}
\newcommand{\und}[1]{\underline{#1}}
\newcommand{\z}{z}  

\newcommand{\thMW}{\theta^{\text{\rm ar}}}
\newcommand{\tph}{\widetilde{\widehat\phi_1}}
\newcommand{\Pet}{\text{\rm Pet}}





\newcommand{\thing}{ \raisebox{-6.4pt}{$\tilde{\tilde{}}$}  }   
\newcommand{\smallthing}{ \raisebox{-4.4pt}{$\scr\tilde{\tilde{}}$}  }
\newcommand{\ttilde}[1]{\overset{\smash{\thing}}{#1}}
\newcommand{\smallttilde}[1]{\overset{\smash{\smallthing}}{#1}}
\newcommand{\downhookarrow}{\hbox{$\downarrow\hskip -6.1pt\raisebox{6pt}{$\cap$}$}}


\providecommand{\bysame}{\makebox[3em]{\hrulefill}\thinspace}   
\newcommand{\hfb}{\hfill\break}
\newcommand{\margincom}[1]{\marginpar{\bf\raggedright #1}}
\newcommand{\Sec}{\S}


\numberwithin{equation}{section}
\setcounter{section}{0}
\setcounter{MaxMatrixCols}{15}


\newtheorem{theo}{Theorem}[section]
\newtheorem{lem}[theo]{Lemma}
\newtheorem{prop}[theo]{Proposition}
\newtheorem{cor}[theo]{Corollary}
\newtheorem*{main}{Main Theorem}
\newtheorem*{atheo}{Theorem A}
\newtheorem*{btheo}{Theorem B}
\newtheorem{conj}[theo]{Conjecture}
\newtheorem{rem}[theo]{Remark}      
\newtheorem{defn}[theo]{Definition}

\newcommand{\EE}{{\mathbb E}}

\newcommand{\OO}{\text{\rm O}}
\newcommand{\UU}{\text{\rm U}}

\newcommand{\OK}{O_{\smallkay}}
\newcommand{\DI}{\mathcal D^{-1}}

\newcommand{\pre}{\text{\rm pre}}

\newcommand{\Bor}{\text{\rm Bor}}
\newcommand{\Rel}{\text{\rm Rel}}
\newcommand{\rel}{\text{\rm rel}}
\newcommand{\Res}{\text{\rm Res}}
\newcommand{\TG}{\widetilde{G}}

\newcommand{\OL}{O_{\Lambda}}
\newcommand{\OLB}{O_{\Lambda,B}}

\newcommand{\p}{\varpi}

\newcommand{\cutter}{\vskip .1in\hrule\vskip .1in}

\parindent=0pt
\parskip=10pt
\baselineskip=14pt

\newcommand{\PP}{\mathcal P}
\renewcommand{\OO}{\mathcal O}
\newcommand{\BB}{\mathbb B}
\newcommand{\OBB}{O_{\BB}}
\newcommand{\Max}{\text{\rm Max}}
\newcommand{\Opt}{\text{\rm Opt}}
\newcommand{\OH}{O_H}

\newcommand{\phhat}{\widehat{\phi}}
\newcommand{\thetahat}{\widehat{\theta}}

\newcommand{\lbold}{\text{\boldmath$\l$\unboldmath}}
\newcommand{\abold}{\text{\boldmath$a$\unboldmath}}
\newcommand{\cbold}{\text{\boldmath$c$\unboldmath}}
\newcommand{\aabold}{\text{\boldmath$\a$\unboldmath}}
\newcommand{\gbold}{\text{\boldmath$g$\unboldmath}}
\newcommand{\obold}{\text{\boldmath$\o$\unboldmath}}
\newcommand{\fbold}{\text{\boldmath$f$\unboldmath}}
\newcommand{\rbold}{\text{\boldmath$r$\unboldmath}}
\newcommand{\ffbold}{\und{\fbold}}

\newcommand{\deltaBB}{\delta_{\BB}}
\newcommand{\kappaBB}{\kappa_{\BB}}
\newcommand{\aboldBB}{\abold_{\BB}}
\newcommand{\lboldBB}{\lbold_{\BB}}
\newcommand{\gboldBB}{\gbold_{\BB}}
\newcommand{\bbold}{\text{\boldmath$\b$\unboldmath}}

\newcommand{\fff}{\phi}

\newcommand{\spp}{\text{\rm sp}}

\newcommand{\pob}{\mathfrak p_{\bold o}}
\newcommand{\kob}{\mathfrak k_{\bold o}}
\newcommand{\gob}{\mathfrak g_{\bold o}}
\newcommand{\pobp}{\mathfrak p_{\bold o +}}
\newcommand{\pobm}{\mathfrak p_{\bold o -}}


\newcommand{\bb}{\frak b}

\newcommand{\bbbold}{\text{\boldmath$b$\unboldmath}}

\renewcommand{\ll}{\,\frak l}
\newcommand{\uC}{\underline{\Cal C}}
\newcommand{\uZZ}{\underline{\ZZ}}
\newcommand{\B}{\mathbb B}
\newcommand{\CL}{\text{\rm Cl}}

\newcommand{\pp}{\frak p}

\newcommand{\OKp}{O_{\smallkay,p}}

\renewcommand{\top}{\text{\rm top}}

\newcommand{\bF}{\bar{\mathbb F}_p}

\newcommand{\beq}{\begin{equation}}
\newcommand{\eeq}{\end{equation}}


\newcommand{\Dl}{\Delta(\l)}
\newcommand{\mm}{{\bold m}}

\newcommand{\FD}{\text{\rm FD}}
\newcommand{\LDS}{\text{\rm LDS}}

\newcommand{\dcM}{\dot{\Cal M}}
\newcommand{\bpm}{\begin{pmatrix}}
\newcommand{\epm}{\end{pmatrix}}

\newcommand{\GW}{\text{\rm GW}}

\newcommand{\uk}{\bold k}
\newcommand{\uo}{\text{\boldmath$\o$\unboldmath}}

\newcommand{\uz}{\und{\zeta}}
\newcommand{\duz}{\und{\dot\zeta}}
\newcommand{\ub}{\und{b}}
\newcommand{\uB}{\und{B}}
\renewcommand{\uC}{\und{C}}
\newcommand{\xp}{x_+}
\newcommand{\xm}{x_-}
\newcommand{\tu}{\tilde u}
\newcommand{\sspan}{\text{\rm span}}

\newcommand{\kk}{\mathfrak k}
\newcommand{\tio}{\tilde\o}
\newcommand{\Fock}{\text{\rm Fock}}
\newcommand{\AO}{{\bf AO}}
\newcommand{\qq}{{\bold q}}
\newcommand{\Ps}{\Psi}

\newcommand{\dbs}[1]{\frac{\d \uB_{#1}}{\d s}}
\newcommand{\dbt}[1]{\frac{\d \uB_{#1}}{\d t}}

\newcommand{\OFD}{\text{\rm OFD}}
\newcommand{\Erf}{\text{\rm Erf}}
\newcommand{\Erfc}{\text{\rm Erfc}}
\newcommand{\Arctan}{\text{\rm Arctan}}

\newcommand{\CC}{\mathcal C}
\newcommand{\bC}{C}

\newcommand{\vhy}{V_y}
\newcommand{\dhy}{D'_y}
\newcommand{\Bigwedge}{\mathord{\adjustbox{valign=B,totalheight=.6\baselineskip}{$\bigwedge$}}}

\newcommand{\Psq}{\P^{\square}}
\newcommand{\Ptri}{\P^{\triangle}}

\newcommand{\now}{\count0=\time 
\divide\count0 by 60
\count1=\count0
\multiply\count1 by 60
\count2= \time
\advance\count2 by -\count1
\the\count0:\the\count2}




\title{Theta integrals and generalized error functions, II}

\author{Jens Funke  and  Stephen Kudla}

\address{Department of Mathematical Sciences, Durham University,
South Road, Durham, DH1 3LE, UK}
\email{jens.funke@durham.ac.uk}
\address{
Department of Mathematics,
University of Toronto,
40 St. George St.,
Toronto, ON M5S 2E4
Canada}

\email{skudla@math.toronto.edu}

\maketitle

\section{Introduction}  
The theory of theta series attached to integral lattices $L$ in rational quadratic spaces $L\tt_\Z\Q$ with bilinear form $(\ ,\ )$ of signature $(p,q)$, $pq>0$, 
has a long history including fundamental work of Hecke, Siegel, Maass,  and others. Siegel constructed theta series 
for such indefinite lattices by using majorants and hence obtained functions depending on both an elliptic modular variable $\tau$ 
and a point $z\in D$, the space of oriented negative $q$-planes in $V = L\tt_\Z\R$. These Siegel theta series 
have weight $\frac{p-q}2$ in $\tau$, but, unlike the classical theta series for positive definite lattices, 
they are non-holomorphic. 
In joint work of the second author and John Millson, \cite{KM.I}, \cite{KM.II}, and \cite{KM.IHES}, 
a family of theta series valued in closed differential forms on $D$ was constructed; we will refer to these as theta forms. 
The series obtained by passing to classes in the cohomology of  
the locally symmetric space $\Gamma\back D$, where $\Gamma$ is a subgroup of finite index in the 
isometry group of $L$, were shown to be holomorphic modular forms of weight $\frac{p+q}2$ valued in $H^q( \Gamma\back D)$. 

The resulting theory provides one analogue of the classical holomorphic theta series in the indefinite case.
However, it is still an attractive
challenge to define theta series for indefinite lattices more directly by restricting the summation to lattice vectors in suitable subsets $\mathcal W$ of 
$V$ where the quadratic form 
is positive so that the series
\beq\label{hol-indef-series}
\sum_{x\in h+L}  \P(x;\mathcal W)\, \qq^{Q(x)}, \qquad  \qquad \qq= e(\tau) = e^{2\pi i \tau},\ Q(x) = \frac12(x,x),
\eeq
is termwise absolutely convergence and hence defines a holomorphic function of $\tau$.  Here $\P(\cdot,\mathcal W)$ is 
supported on $\mathcal W$, perhaps valued in $\pm1$.  Unfortunately, such series are typically not modular. 

In his thesis, Zwegers \cite{zwegers} introduced a series of this type for $V$ of signature $(m-1,1)$, where 
$$\P(x;\mathcal W) = \frac12\,(\sgn(x,C') - \sgn(x,C)\,), $$
for $C$ and $C'\in V$ negative vectors in the same component of the cone of negative vectors. 
He showed that the resulting holomorphic series is not modular in general, but that it can be competed to a (non-holomorphic) 
modular form of weight $\frac{m}2$ by adding a suitable series constructed using the error function. 

Recently, Alexandrov, Banerjee, Manschot and Pioline, \cite{ABMP}, proposed a generalization of Zwegers' construction to the 
case of arbitrary signature $(m-q,q)$ where $\P(x;\mathcal W) = \Psq_q(x;\CC)$ is given by 
\beq\label{def-P-sq}
\Psq_q(x;\CC) = 2^{-q}\prod_{j=1}^q (\,\sgn(x,C'_j) - \sgn(x,C_j)\,),
\eeq
for a collection 
$$\CC=\CC^{\square}=\{\{C_1,C'_1\},\{C_2,C'_2\},\dots, \{C_q,C'_q\}\},$$
of pairs of negative vectors satisfying certain incidence relations.  They introduced generalized error functions 
and, in the case $q=2$, used them to construct a (non-holomorphic) modular 
completion of the series (\ref{hol-indef-series}).   Shortly thereafter, Nazaroglu \cite{nazar} handled the case of general signature
along the lines suggested in \cite{ABMP}.  In both \cite{ABMP} and \cite{nazar}, the modularity of the 
non-holomorphic completion is established by using a result of Vigneras, \cite{vigneras}, which asserts the modularity of 
theta like series built from a certain class of functions.  The essential step is to show that suitable combinations of 
generalized error functions define functions in this class and, at the same time, are suitably linked to the 
function $\Psq_q(\cdot,\CC)$. 
Sums of lattice vectors in more general positive polyhedral cones were considered by Westerholt-Raum, 
\cite{raum}; he again uses Vigneras to deduce modularity and also discusses the degenerate case where edges of the cone are allowed to be rational isotropic vectors.

In this paper, we show that the indefinite theta series 
of \cite{zwegers}, \cite{ABMP} and \cite{nazar} can be obtained by integrating the theta forms for $V$ of signature $(p,q)$ 
over certain singular $q$-cubes determined by a collection $\CC$ which is in `good position'. As indicated by the title, this paper is a sequel to 
\cite{kudla.thetaint} where such a result is proved for the case $q=2$. 
We also consider the analogous integrals over singular simplices, where the input data is now a collection 
$\CC = \CC^{\triangle}= \{C_0,C_1,\dots, C_q\}$ of negative vectors in $V$ in `good position'. In particular, any $q$ of them span a negative $q$ plane in $V$ and these
$q$-planes give 
the vertices of a singular simplex in $D$.

To state the results more precisely, we need some notation. 
Let $L$ be an even integral lattice in $V$ with dual lattice $L^\vee$. 
For $\tau = u+iv\in \H$ and $\mu\in L^\vee/L$, the theta form is the closed $\Gamma_L$-invariant  $q$-form on $D$ given by 
$$\theta_\mu(\tau,\ph_{KM}) =  \sum_{x\in \mu+L} \ph_{KM}(\tau,x).$$
Here the Schwartz form
$$\ph_{KM}(\tau,x) = v^{-\frac{p+q}4}\,(\o(g'_\tau)\ph_{KM})(x).$$
is obtained by the action $\o(g'_\tau)$ of the Weil representation on the basic Schwartz form $\ph_{KM}(x)$, cf, section~\ref{section2.2}.
A precise formula for $\ph_{KM}(x)$ is given in section~\ref{section5}. 

First consider the `cubical' case. For a collection $\CC=\CC^{\square}$ of $q$ pairs of negative vectors, we can define a $q$-tuple of vectors 
$$B(s) = [(1-s_1)C_1+s_1 C_1', \dots, (1-s_q)C_q+ s_q C_q'] \in V^q,$$
for each $s= [s_1, \dots, s_q]\in [0,1]^q$.  We say that $\CC$ is in good position
if the collection $B(s)$ spans a negative $q$-plane for all $s\in [0,1]^q$. If $\CC$ is in good position, we obtain an oriented 
singular $q$-cube 
$$\phi_\CC: [0,1]^q \lra D,\quad s\mapsto \sspan\{B_1(s_1), \dots, B_q(s_q)\}_{\text{p.o.}},$$
where the subscript `p.o.' indicates that the given $q$-tuple defines the orientation.
Let $S^{\square}(\CC)$ be the resulting singular $q$-cube. 

Next consider the simplicial case. In this case, we suppose that the set of vectors $\CC=\CC^{\triangle}$ is linearly 
independent over $\R$ and that any $q$ of them span a negative $q$-plane. 
Their span $U$ is an oriented $q+1$-plane of signature $(1,q)$ and the dual basis $\CC^\vee = \{C_0^\vee, \dots, C_q^\vee\}$ consists of positive vectors. 
We say that $\CC$ is in good position if, for all 
$$s=[s_0, \dots, s_q]\in \Delta^q= \{\,s\in [0,1]^{q+1}\mid \sum_{i=0}^q s_i=1\,\},$$
the vector 
$$C^\vee(s) = \sum_i s_i C^\vee_i$$ 
is positive. For example, it suffices to require that all entries of the Gram matrix $((C^\vee_i,C^\vee_j))$ are non-negative\footnote{This was pointed out to the 
second author by Sanders Zwegers at the Dublin Conference in June, 2017.}.  For $\CC$ in good position, we obtain a map
$$\phi_\CC: \Delta^q \lra D,\qquad s\mapsto C^\vee(s)^\perp,$$
where the $\perp$ is taken in $U$ and the orientation of $\phi_\CC(s)$ is determined by the normal vector $C^\vee(s)$. 
We write $S(\CC)$ for the resulting singular simplex. We also define
\beq\label{def-phi-tri}
\P_q^{\triangle}(x,\CC) = 2^{-q-1}\bigg(\prod_{j=0}^q(1-\sgn(x,C_j)) + (-1)^q\prod_{j=0}^q(1+\sgn(x,C_j)\ \bigg).
\eeq

We consider the theta integrals
\beq\label{theta-int-def-intro}
I_\mu(\tau,\CC) = \int_{S(\CC)} \theta_\mu(\tau, \ph_{KM}).
\eeq
Note that, by construction,  $I_\mu(\tau,\CC)$ is a (typically non-holomorphic) modular form of weight $\frac{p+q}2$ with transformation law inherited from that of the 
theta form. 

For $1\le r\le q$ and for a collection of vectors $\cbold=\{c_1,\dots,c_r\}$ spanning an oriented negative $r$-plane, let
$E_r(\cbold, x)$, $x\in V$,  be the generalized error function defined by (\ref{def.gen.error}). 
Finally, for $x\in V$, $x\ne0$, let 
$$D_x= \{ z\in D\mid x\perp z\ \},$$
and note that, if $Q(x)>0$, then $D_x$ is a totally geodesic subsymmetric space in $D$ of codimension $q$. Otherwise, $D_x$ is empty. 

Our main result is then the following.  

\begin{main} Assume that $\CC$ is in good position and let $\P_q(x,\CC)$ be $\Psq_q(x,\CC)$ (resp. $\Ptri_q(x,\CC)$)
is the cubical (resp. simplicial) case.  \hfb
(i) The series 
\beq\label{hol-series}
\sum_{x\in \mu+L} \P_q(x,\CC)\,\qq^{Q(x)}
\eeq
is termwise absolutely convergent. \hfb 
(ii) If $\P_q(x,\CC)\ne 0$, then 
$$D_x\cap S(\CC)  = \phi_{\CC}(s(x))$$ 
for a unique point $s(x)\in [0,1]^q$ (resp. $\Delta^q$), the map $\phi_\CC$ is immersive at $s(x)$, and 
$$\P_q(x,\CC) = I(S(\CC),D_x)$$ 
is the intersection number\footnote{If $s(x)$ is on the boundary of $[0,1]^q$, this quantity is defined in 
(\ref{def-inter}) in section~\ref{append-2}.} of $S(\CC)$ and $D_x$ at $\phi_\CC(s(x))$.  \hfb  
(iii) In the cubical case, the theta integral is given explicitly by  
\beq\label{theta-int-formula}
I_\mu(\tau,\CC)  = \sum_{x\in \mu+L} 
(-1)^q\,2^{-q}\sum_I (-1)^{|I|}\, E_q(\bC^I;x\sqrt{2 v})\,\qq^{Q(x)},
\eeq
where for a subset $I\subset \{1,\dots,q\}$, 
$\bC^I$ is the $q$-tuple with $\bC^I_j=C_j$ if $j\notin I$ and $\bC^I_j=C'_{j}$ if $j\in I$, ordered by the index $j$.\hfb
Moreover, $I_\mu(\tau,\CC)$ is the modular completion of the series (\ref{hol-series}). \hfb
(iii) In the simplicial case, the theta integral is given by 
\beq\label{theta-int-formula-tri}
I_\mu(\tau,\CC)  = \sum_{x\in \mu+L} (-1)^q 2^{-q}\sum_{r=0}^{[q/2]}\sum_{\substack{I \\ |I| = 2r+1}} E_{q-2r}(\CC^{(I)}; x\sqrt{2 v})\,\qq^{Q(x)}.
\eeq
where, for a subset $I\subset \{0, 1, \dots, q\}$, let
$\CC^{(I)}$ be the collection of $q+1-|I|$ elements where the $C_i$ with $i\in I$ have been omitted. 
Here $E_0(\dots)=1$. 
\end{main}

\begin{rem}
(1) The series on the right side of (\ref{theta-int-formula}) coincides with that in \cite{ABMP} and \cite{nazar}, at least when 
the collection $\CC$ satisfies their incidence conditions. The incidence conditions they impose on $\CC$, i.e., conditions expressed as requirements on the entries of the 
Gram matrix of $\CC$, imply that $\CC$ is in good position.  On the other hand, the `good position' condition, 
which is a condition on the Gram matrix of the collection $B(s)$ for all $s\in [0,1]^q$,  is sufficient for our results. We leave aside the, perhaps 
subtle, problem of expressing this condition in terms of incidence.  \hfb
(2) Part (ii) of the theorem provides a geometric interpretation of the coefficients 
of the holomorphic generating series as intersection numbers. It would be interesting to see if this interpretation has any significance in the 
physics context which was the original motivation for \cite{ABMP}. \hfb
(3) The proof of (i) is already given in the general case in \cite{kudla.thetaint}.  That the right side of (\ref{theta-int-formula}) is the modular completion of the
series (\ref{hol-series}) is, of course, a main result of \cite{zwegers}, \cite{ABMP}, and \cite{nazar}. \hfb
(4) It is interesting that generalized error functions for negative $r$-planes with $r<q$ occur in the explicit formula in the simplicial case. 
This phenomenon was pointed out by Westerholt-Raum for more general cones, \cite{raum}. 
The indefinite theta series associated to collections $\CC^{\triangle}$ were also discussed by Zwegers in his talk at the 
Dublin conference on Indefinite Theta Functions in June 2017.
\end{rem}

Since the theta integral (\ref{theta-int-def-intro}) can be computed termwise, 
the formulas of parts (iii) and (iv) follow immediately from the formulas for the integral of $\ph_{KM}(x)$ over $S(\CC)$ given in Theorem~\ref{main.theo.1} 
and Theorem~\ref{tetra-theo} respectively.  
These results are, in turn, proved by induction on $q$, where the case $q=1$ is an elementary calculation. 
The key points are the following. First note that both sides of the identities in 
Theorem~\ref{main.theo.1} and Theorem~\ref{tetra-theo} are smooth functions of $x$ and $\CC$, 
so that it suffices to consider the case where $x$ is regular with respect to $\CC$, i.e., where $(x,C)\ne 0$ for all $C\in \CC$. 
As already noted in \cite{FK-I}, the Schwartz form $\ph_{KM}(x)$
comes equipped with an explicit primitive $\Psi(x)$, defined on the set $D-D_x$.  Taking care of the possible singularity, which under the regularity 
assumption occurs at most at a unique interior point of $S(\CC)$, we can apply Stokes' theorem. 
The boundary of $S(\CC)$ consists of singular $(q-1)$-cubes (resp. simplices) in totally geodesic subsymmetric spaces of the form 
$$D'_y = \{ \ z\in D\mid y\in z\,\}$$
for $y= C_j$ or $C'_j$ in $\CC$. Note that $D'_y$ will then be isomorphic to the space of oriented negative $(q-1)$-planes in the space $V_y = y^{\perp}$, of signature
$(p,q-1)$. 
Now the crucial (and remarkable!) fact is that the pullback of the primitive $\Psi(x)$ to such a subspace $D'_y$ 
can be written as an integral transform of the Schwartz $(q-1)$-form $\ph_{KM}^{V_y}(\pr_{V_y}x)$ for $V_y$, cf. Proposition~\ref{lem1.5}. 
By induction, we obtain an expression for the boundary integral as a sum of the corresponding signature $(p,q-1)$ theta integrals. 
Finally, we invoke an inductive identity for generalized error functions from \cite{nazar}, Proposition~\ref{key.nazar}, to conclude the proof. 

\begin{rem} 
(1) One can consider the theta integral $I(\tau,S)$ over any oriented $q$-chain $S$ in $D$, and, if $S$ is compact, this can again be 
computed termwise.  If, moreover, the boundary of $S$ consists of $(q-1)$ chains lying in $D'_y$'s, one can proceed by induction. 
In particular, our result gives an explicit formula for any $q$-chain written as a sum of simplices of the form $S(\CC^{\triangle})$. 
Moreover, since the theta forms are $\Gamma_L$-invariant, their integrals over $\Gamma_L$ equivalent $q$-chains coincide.  \hfb
(2) We can also consider the theta integral $I(\tau,\CC)$ in the degenerate case, when some of the elements in $\CC$ are rational isotropic vectors. Geometrically, this amounts to the $q$-chain $S(\CC)$ going out to some of the rational cusps (of the arithmetic quotient) of $D$. However, while the theta integral over the non-compact region $S(\CC)$ still is convergent by the results of \cite{FMres} (for signature $(m-1,1)$, see \cite{FM.I}), it is in general no longer termwise absolutely convergent (unless one imposes a ``non-singularity" condition as in \cite{K1981}, see also \cite{raum}). One interesting example is signature $(1,2)$, where one can realize the fundamental domain for $\SL_2(\Z)$ as a surface $S(\CC)$ for a certain $\CC$, and the associated theta integral $I(\tau,\CC)$ gives Zagier's non-holomorphic Eisenstein series of weight $3/2$, see \cite{F-thesis,BF2}. 
%
\hfb
(3) In the companion paper \cite{FK-I}, we consider the theta integral $\int_D  \eta \wedge \theta_\mu(\tau, \ph_{KM})$ against a compactly supported $(p-1)q$ differential form $\eta$ on $D$. In particular, we establish the properties of the primitive $\Psi(x)$ as a current on $D$. 
\end{rem}

Our construction yields a formula for the image of the (typically non-holomorphic) modular form $I_\mu(\tau,\CC)$ under the lowering operator 
$-2i v^2 \frac{\d}{\d \bar \tau}$ or, alternatively, for its shadow given by taking the complex conjugate of this. This formula implies the following, 
cf. section~\ref{section8}.

\begin{cor}\label{cor1.3}  Suppose that $\CC$ is rational collection, i.e., that $C\in L\tt_\Z\Q$ for all $C\in \CC$. Then the shadow of 
$I_\mu(\tau,\CC)$ is a linear combination of products of unary theta series of weight $\frac32$ and complex conjugates of 
indefinite theta series for the spaces $V_C = C^\perp$ for $C\in \CC$. 
\end{cor}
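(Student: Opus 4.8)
The plan is to read off the shadow directly from the explicit formulas (\ref{theta-int-formula}) and (\ref{theta-int-formula-tri}) of the Main Theorem. Since both series converge termwise and the only $\bar\tau$-dependence sits in the generalized error functions $E_r(\cdot\,;x\sqrt{2v})$ — the factor $\qq^{Q(x)}=e^{2\pi i\tau Q(x)}$ being holomorphic — I would first apply the lowering operator $-2iv^2\,\frac{\d}{\d\bar\tau}$ term by term, using $\frac{\d v}{\d\bar\tau}=\tfrac{i}{2}$ and the chain rule. This reduces the whole computation to understanding $\frac{\d}{\d v}E_r(\cbold;x\sqrt{2v})$, after which the shadow is obtained by the outer complex conjugation.

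The key input is the derivative formula for the generalized error functions defined in (\ref{def.gen.error}). Differentiating $E_r$ along its scaling parameter splits off, for each of the $r$ spanning vectors $c_j$, a one-dimensional Gaussian in the $c_j$-direction multiplied by the generalized error function $E_{r-1}$ for the projection $\pr$ of $x$ into the orthogonal complement of $c_j$ inside $\sspan(\cbold)$. This is the infinitesimal counterpart of the relation between $\ph_{KM}$ and $\ph_{KM}^{V_y}$ that underlies Proposition~\ref{lem1.5}, and it is compatible with the inductive identity of Proposition~\ref{key.nazar}; I would use the latter to keep the bookkeeping uniform across the cubical and simplicial formulas.

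Next I would reorganize the resulting lattice sum using the rationality hypothesis on $\CC$. For each $C\in\CC$, rationality guarantees that $\R C$ is a rational line in $V=L\tt_\Z\R$, so the orthogonal splitting $V=\R C\oplus V_C$ with $V_C=C^\perp$ induces a decomposition of $\mu+L$ into a finite union of product cosets of the projected lattices in $\R C$ and in $V_C$. Under this splitting the Gaussian factor in the $C$-direction carries the linear factor $(x,C)$ produced by the differentiation; summed over the rank-one component (together with the matching part of $\qq^{Q(x)}$) it yields a unary theta series of weight $\frac32$ attached to the line $\R C$, the linear factor being exactly what raises the weight from $\frac12$ to $\frac32$. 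The complementary $E_{r-1}$ factor, summed over the $V_C$-component, reassembles through the signature $(p,q-1)$ instance of the explicit formula into the indefinite theta series for $V_C$; the conjugation defining the shadow then leaves the unary factor holomorphic and turns the $V_C$-series into its complex conjugate. Collecting the contributions over all $C\in\CC$ produces the asserted finite linear combination.

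The main obstacle is the last reorganization step: matching the Gaussian-times-$E_{r-1}$ expression against the explicit formula for the $V_C$-theta integral, and doing so uniformly over the cosets of $\mu+L$ and across both the cubical collections $\CC^{\square}$ and the simplicial collections $\CC^{\triangle}$. Tracking the finite-index data of the projected lattices, the orientation and normalization constants $(-1)^q 2^{-q}$, and verifying that the residual sum is \emph{exactly} the signature $(p,q-1)$ series — rather than merely of the same shape — is where the genuine work lies. The simplicial case is the more delicate one, because the lower error functions $E_{q-2r}$ already occur in (\ref{theta-int-formula-tri}), so several of them feed into the same $V_C$ after differentiation and their contributions must be combined before they assemble into a single $V_C$-theta integral.
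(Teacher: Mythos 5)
Your argument is correct, and it reaches the same intermediate formula as the paper, but by a genuinely different route for the first half. The paper does not differentiate the explicit error-function expansion at all: it applies the lowering operator at the level of the theta form, using Lemma~\ref{lem2.3}(i) to write $-2iv^2\frac{\d}{\d\bar\tau}\ph_{KM}(\tau,x)=d\psi_{KM}(\tau,x)$, then Stokes' theorem to reduce to $\int_{\d S(\CC)}\psi(\tau,x)$ as in (\ref{shadow-1}), and finally the pullback identity of Corollary~\ref{lem1.6}(ii), which exhibits $\kappa_y^*\psi_{KM}(\tau,x)$ directly as a one-dimensional Gaussian times $\ph_{KM}^{V_y}(\tau,x_{\perp y})$; this yields (\ref{shadow-2}) with the signature $(p,q-1)$ integrals $I(\tau,x_{\perp j},\CC[j])$ appearing immediately and with no recombination of error functions needed. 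Your route instead differentiates the closed formulas of the Main Theorem and invokes the gradient identity (\ref{naza-1}) (the differential version of Proposition~\ref{key.nazar}) to peel off the Gaussian factors; this is legitimate and must give the same answer, since the Main Theorem already identifies the two sides, but it transfers to the shadow computation exactly the combinatorial reassembly (matching the signs $(-1)^{|I|}$ across the $2^q$ vertices in the cubical case, and the several $E_{q-2r}$ feeding one face in the simplicial case) that the geometric argument avoids. The endgame is identical in both treatments: rationality of $C_j$ makes $L_j^0\oplus L_j^1$ a finite-index sublattice (the paper uses the intersection lattices $L\cap\Q C_j$ and $L\cap V_j$ rather than projections, with coset data in the duals), the rank-one sum produces the weight $\frac32$ unary theta series via the linear factor $(x^0,\uC_j)$, and the outer conjugation distributes as you say. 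What the paper's approach buys is that (\ref{shadow-2}) is obtained in closed form without induction; what yours buys is independence from the primitive $\Psi_{KM}$ and the boundary geometry, at the cost of the bookkeeping you correctly identify as the main obstacle.
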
 

Here is an outline of the contents of the various sections. Section 2 contains an overview of the construction of theta forms, their 
modular transformation properties, and their relation to geodesic cycles. There is considerable overlap with the material in 
\cite{FK-I}, although our notation and perspective here differs somewhat. Section 3 explains the singular $q$-cubes associated to 
collections $\CC$ in good position and their intersection with the cycles $D_x$ in the regular case. It should be noted that the role of the 
symmetric space $D$ and the singular $q$-cubes is not so evident in \cite{ABMP} and \cite{nazar}.  The use of the `good position' condition
streamlines the treatment, although the important problem of finding equivalent incidence relations is left open. The explicit formula for the 
`cubical' integrals of $\ph_{KM}(x)$ is given in Theorem~\ref{main.theo.1} of Section 4.  In Section 5, we give a more detailed discussion of the
Schwartz forms $\ph_{KM}$ and their primitives. In Section 6  we prove the key formulas for the pullbacks of these forms to the 
spaces $D'_y$.  Section 7 contains the proof of Theorem~\ref{main.theo.1}.  Section 8 contains the computation of the shadows. 
Section 9 contains the analogous computations in the simplicial case, where the are several crucial and interesting differences. 
Some technical details are provided in Appendix I. 

\subsection{Thanks}  The second author benefited from the Banff workshop on Modular forms in String Theory in September 2016, 
as well as from discussions with B. Pioline and S. Zwegers 
at the conference, Indefinite Theta Functions and Applications in Physics and Geometry, at Trinity College, Dublin in June of 2017.

\subsection{Notation}

For vectors $x$ and $y$ in a non-degenerate inner product space $V$, $(\ ,\ )$ with $Q(y)\ne 0$, we write
$$x_{\perp y} = x - \frac{(x,y)}{(y,y)} y.$$
Note that 
$$(x_{\perp y}, x'_{\perp y}) = (x,x') -\frac{(x,y)(x',y)}{(y,y)}.$$

We write $e(x) = e^{2\pi i x}$. 

\section{Theta forms and their integrals}
\subsection{Preliminaries}
We begin by reviewing some standard notation and constructions. A good reference is \cite{shintani}.
Suppose that $L$ $(\ ,\ )$ is a lattice of rank $m=p+q$ with an even integral symmetric bilinear form of signature $(p,q)$ with $pq>0$. 
Let $L^\vee\supset L$ be the dual lattice and set $Q(x) = \frac12(x,x)$.
Let $G=O(L\tt_\Z\R)$ be the orthogonal group and let 
$$\Gamma_L = \{ \ \gamma \in G\mid \gamma L=L, \  \gamma\vert_{L^\vee/L}= \text{\id}\ \}.$$
Let $V = L\tt_\Z\R$ and  let 
$$D =D(V)= \{ \ z\in \text{Gr}_q(V)\mid\  (\ ,\,)\vert_z<0,\, \text{$z$ oriented}\,\}$$
be the space of oriented negative $q$-planes in $V$. 
For $z\in D$, the associated Gaussian is 
$$\ph_0(x,z) = e^{-\pi (x,x)_z},$$
where
$$R(x,z)=-(\pr_z(x),\pr_z(x)),$$
and
$$
(x,x)_z= (x,x)+ 2 R(x,z) 
$$ 
is the majorant determined by $z$. 
%
%
For fixed $z$, $\ph_0(\cdot, z)=\ph_0(z) \in \mathcal S(V)$ is a Schwartz function on $V$, while, for fixed $x\in V$, $\ph_0(x,\cdot) = \ph_0(x)\in A^0(D)$
is a smooth function on $D$ satisfying the equivariance 
$$\ph_0(g x,g z) = \ph_0(x,z)$$
for $g\in G$, or equivalently
$$g^*\ph_0(x) = \ph_0(g^{-1}x) =:\o(g)\ph_0(x),$$
where $g^*$ denotes the pullback of functions on $D$ and $\o(g)$ denotes the action of $g$ on $\mathcal S(V)$. 
Thus
\beq\label{Sch-equiv}
\ph_0 \in [\,  \mathcal S(V)\tt A^0(D)\,]^G.
\eeq

The action $\o$ of $G$ on $\mathcal S(V)$ commutes with the Weil representation action of the 
two-fold cover $G'=\Mp_2(\R)$ of $\SL_2(\R)$ on $\mathcal S(V)$. Recall that for $b\in \R$ and $a\in \R^\times$, there are elements $n'(b)$,  
$m'(a)$, and $w'$ in $G'$ projecting to 
$$n(b) = \bpm 1&u\\{}&1\epm, \quad  m(a)=\bpm a&{}\\{}&a^{-1}\epm, \quad\text{and}\quad w= \bpm {}&1\\-1&{}\epm$$ 
in $\SL_2(\R)$ whose Weil representation action is given by
\begin{align*}
\o(n'(b)) \ph(x) &= e(u Q(x))\,\ph(x)\\
\nass
\o(m'(a))\ph(x) &= |a|^{\frac{m}2}\ph(ax)\\
\nass
\o(w')\ph(x)&= e({\scr\frac{p-q}8})\,\hat{\ph}(x) =e({\scr\frac{p-q}8})\, \int_V \ph(y) \,e(-(x,y))\,dy.
\end{align*}
Then, for $\tau=u+iv\in \H$ and $g'_\tau = n'(u)m'(v^{\frac12})$, we have
$$\o(g'_{\tau})\ph_0(x,z)= v^{\frac{p+q}4} e^{-2\pi v R(x,z)} \,\qq^{Q(x)},\qquad \qq= e(\tau) = e^{2\pi i \tau}.$$

The following invariance property gives rise to the modularity of the theta series. 
Define a vector valued tempered distribution 
$$\Theta_L: \mathcal S(V) \lra \C[L^\vee/L], \qquad \ph\mapsto \Theta(\ph;L) = \sum_{\mu\in L^\vee/L} \theta_\mu(\ph)\,e_\mu,$$
where $e_\mu\in \C[L^\vee/L]$ is the characteristic function of the coset $\mu+L$ and 
$$\theta_\mu(\ph) = \sum_{x\in \mu+L}\ph(x).$$
Let $\Gamma'$ be the inverse image of $\SL_2(\Z)$ in $G'$. Then there is a finite Weil representation $\rho_L$ of $\Gamma'$ acting on 
$\C[L^\vee/L]$, and the theta distribution $\Theta_L$ satisfies
$$\Theta_L(\o(\gamma' )\ph) = \rho_L(\gamma') \Theta_L(\ph).$$

Let $K'$ be the inverse image of $\SO(2)$ in $G'$, and suppose that $\ph$ is eigenfunction of weight $\ell\in \frac12 \Z$ for the Weil representation action of $K'$, i.e., 
$$\o(k'_\theta)\ph = e(\ell \theta)\,\ph, \qquad k_\theta = \bpm \cos \theta&\sin \theta\\-\sin\theta&\cos\theta\epm.$$

%
%
%
%
Then the invariance of the theta distribution together with a standard calculation, \cite{shintani}, pp. 90--98, 
implies that the $\C[L^\vee/L]$-valued theta series 
$$\sum_{\mu\in L^\vee/L} \theta_\mu(\tau,z;\ph)\,e_\mu = v^{-\frac{\ell}{2}} \,\Theta_L(\o(g'_\tau)\ph)$$
is a (non-holomorphic)  vector-valued modular form of weight $\ell$ and type $(\rho_L,\C[L^\vee/L])$. 

The Gaussian is an eigenfunction of $K'$ of weight $\frac{p-q}2$ so that the Siegel theta series 
$\theta_\mu(\tau,z;\ph_0)$ are components of vector valued modular forms and, moreover, via equivariance (\ref{Sch-equiv}), 
are $\Gamma_L$-invariant as functions of $z$, i.e., 
$$\theta_\mu(\tau;\ph_0)\in A^0(D)^{\Gamma_L}.$$
For this semi-classical reformulation of Weil's construction of theta functions we are following Shintani \cite{shintani}, cf. also  \cite{BF}.

%

\subsection{Theta forms}\label{section2.2}

The basic idea is to replace equivariant families of Schwartz functions by equivariant families of {Schwartz forms}, i.e., Schwartz functions 
valued in differential forms on $D$.
Let 
$A^r(D)$ be the space of smooth $r$-forms on $D$.
A main result of \cite{KM.I}, \cite{KM.II} is the explicit construction of a family of Schwartz forms 
\beq\label{KM-equi}
\ph_{KM} \in [\, \mathcal S(V) \tt A^q(D)\,]^G.
\eeq
Thus, for $x\in V$ and $g\in G$, 
$$g^*\ph_{KM}(x)= \ph_{KM}(g^{-1}x) \in A^q(D).$$
In particular, for fixed $x\in V$, $\ph_{KM}(x)$ is a $G_x$-invariant $q$-form on $D$.   For example: $\ph_{KM}(0)$ is a $G$-invariant form.
Under $K'$, 
$$\o(k_\theta)\ph_{KM} = e({\scr\frac{p+q}2}\,\theta)\,\ph_{KM}.$$
Note the shift in weight!  
Moreover, the $q$-form $\ph_{KM}(x)$ is {closed},
$$d\ph_{KM} = 0,$$
where $d: A^q(D)\rightarrow A^{q+1}(D)$ is the exterior derivative. 

Define the {\bf theta form} 
$$\theta_\mu(\tau,\ph_{KM}) := v^{-\frac{\scr (p+q)}4} \theta_\mu(\o(g'_\tau)\ph_{KM}).$$
Then, by construction, 
$\theta_\mu(\tau,\ph_{KM})$ is a closed $\Gamma_L$-invariant $q$-form on $D$ and
hence defines a closed $q$-form on the (orbifold) quotient $M_L=[\Gamma_L\back D]$. 
Moreover, as a function of $\tau$,   $\theta_\mu(\tau,\ph_{KM})$ is a component of a (non-holomorphic) modular form of weight $\frac{p+q}2$
and type $(\rho_L,\C[L^\vee/L])$.

\subsection{Relation to geodesic cycles}

The theta forms define cohomology classes for the locally symmetric space $M_L$ which are related to totally geodesic cycles. 
This was the original motivation for their constuction. 
We recall briefly the basic facts. 
For $x\in V$ with $x\ne 0$, let $V_x = x^\perp$, and let
$$D_x=\{z\in D\mid R(x,z)=0, \text{i.e., $z\subset V_x$}\}.$$
In particular,   $D_x \simeq D(V_x)$ so that $D_x$ is empty if $Q(x)\le 0$, and is a totally geodesic sub-symmetic space of 
codimension $q$ if $Q(x)>0$.

Let 
$\pr_{\Gamma_L}:  D \rightarrow \Gamma_L\back D = M_L,$
and, for $x$ with $Q(x)>0$,  let 
\begin{align*}
Z(x) &= \pr_{\Gamma_L}(D_x),\\
\noalign{a totally geodesic codimension $q$-cycle in $M_L$ with an immersion}
i_x:\Gamma_x\back D_x &\lra Z(x) \subset \Gamma\back D.
\end{align*}
Notice that $Z(x)$ depends only on the $\Gamma_L$-orbit of $x$.

The following results are special cases of those obtained in \cite{KM.I}, \cite{KM.II} and \cite{KM.IHES}:
\begin{itemize}
\item[(i)] Suppose that $\eta$ is a closed and compactly supported $(p-1)q$-form on $M_L$. 
Then 
$$\int_{M_L} \eta\wedge \theta_\mu(\tau,\ph_{KM}) = \int_{M_L}\eta\wedge \ph_{KM}(0) + \sum_{\substack{x\in \mu+L\\ Q(x)>0\\ \mod \Gamma_L}}
\bigg(\int_{Z(x)}\eta \,\bigg)\,\qq^{Q(x)}.$$
\item[(ii)] Suppose that $S$ is a compact closed (i.e., $\d S=0$) oriented $q$-cycle on $M_L$. Then 
$$\int_S \theta_\mu(\tau,\ph_{KM})  = \int_{S}\ph_{KM}(0) + \sum_{\substack{x\in \mu+L\\ Q(x)>0\\ \mod \Gamma_L}}  I(S,Z(x))\,\qq^{Q(x)},$$
where $I(S,Z(x))$ is the {intersection number} of the cycles $S$ and $Z(x)$. In particular, both series are termwise absolutely convergent and define 
{\it holomorphic} modular forms of weight $\frac{p+q}2$.
\end{itemize}

Note that these results exactly fit into the framework of \eqref{hol-indef-series}.  Additional discussion is given in \cite{FK-I}.
Many interesting variations are possible! For example, the case of certain non-compact cycles $S$ in $M_L$ is 
considered in joint work of the first author with John Millson, \cite{FM.I}, \cite{FM.II}, \cite{FM.III}.

\subsection{Non-closed compact cycles.}
Suppose that $S$ is a piecewise smooth oriented $q$-chain in the symmetric space $D$. Then, from the general machinery sketched in the previous sections, we obtain 
(non-holomorphic) modular forms, which we will refer to as indefinite theta series, 
\beq\label{indef-theta}
I_\mu(\tau;S) := \int_S \theta_\mu(\tau;\ph_{KM})
\eeq
of weight $\frac{m}2$. 
Since $S$ is compact, we can compute such integrals termwise. Define an operator
\beq\label{IS-op}
I_S: \mathcal S(V) \tt A^q(D) \lra \mathcal S(V),\qquad \ph\mapsto \int_S\ph,
\eeq
from Schwartz forms to Schwartz functions by integrating out the form part. This operator commutes with the Weil representation action of $G'$. 
Thus, we have
\begin{align*}
I_\mu(\tau,S) &= \int_S  v^{-\frac{\scr (p+q)}4} \Theta_\mu(\o(g'_\tau)\ph_{KM})\\
\nass
{}&= v^{-\frac{\scr (p+q)}4} \Theta_\mu(\o(g'_\tau)\big(\,I_S(\ph_{KM})\big),
\end{align*}
so that the indefinite theta series (\ref{indef-theta}) is just the theta series defined by the Schwartz function $I_S(\ph_{KM})$. 
We obtain explicit formulas for the modular forms  $I_\mu(\tau,S)$ whenever we can compute the Schwartz function 
\beq\label{S-int-KM}
I_S(\ph_{KM})\in \mathcal S(V)
\eeq
for a given $q$-chain $S$. 

The remainder of this paper is devoted to the computation in the case of the singular $q$-cubes defined in the next section.
The resulting indefinite theta series are those defined by Zwegers \cite{zwegers} in the case of signature $(m-1,1)$,  by 
Alexandrov, Banerjee, Manschot, and Pioline \cite{ABMP} in the case of signature
$(m-2,2)$ and by Nazarolgu, \cite{nazar}, completing the proposal in \cite{ABMP}, in the case of general signature.

%
%
%
%
%
%

\section{Singular $q$-cubes} 

The data $\CC$, cf. (\ref{CC-data}),  introduced in \cite{ABMP} section 6 determines a singular $q$-cube $S(\CC)$ in $D$, whose geometry we discuss in this section. 
We give an explicit formula in terms of generalized error functions for the integral (\ref{S-int-KM}) in the case 
when $S=S(\CC)$ for $\CC$ in `good position'.  However, we have left open the problem of determining explicit conditions on the matrix of inner products of the 
vectors in $\CC$ that are equivalent to good position.

\subsection{The singular $q$-cube $S(\CC)$ and its faces}\label{subsec4.1}

Let
\beq\label{CC-data}
\CC = \{\{C_1,C_{1'}\}, \{C_2,C_{2'}\},\dots,\{C_q,C_{q'}\}\}
\eeq
be a collection of $q$ pairs of negative vectors in $V$.  
For a subset $I\subset \{1, \dots,q\}$, let $C^I$ be the ordered set $\{C_1^I,\dots,C_q^I\}$ of $q$ vectors where we take $C^I_j= C_j$ if $j\notin I$ and $C^I_j= C_{j'}$ if $j\in I$.
The vectors are ordered according to the index $j$. 
Thus, $C^{\emptyset} = \{C_1,\dots,C_q\}$, etc.  
We would like to have the following `incidence relations':
\begin{itemize}
\item[(Inc-1)]
Each collection $C^I$ spans an oriented negative $q$-plane
$$z^I = \sspan\{C^I\}_{\text{p.o.}}.$$
\item[(Inc-2)] The oriented negative $q$-planes 
$z^I $
all lie on the same component of $D$. 
\end{itemize}
These relations, which can be achieved by imposing conditions on the determinants of minors of Gram matrices, should allow us to 
construct a singular $q$-cube with the points $z^I$ as the vertices.  However, as already seen in \cite{kudla.thetaint}, it will be more convenient to 
work with the following formalism.

For $s=[s_1,\dots,s_q]\in [0,1]^q$, 
let
$$B(s) = [B_1(s_1), \dots, B_q(s_q)],$$
where
$$B_j(s_j) = (1-s_j)C_j+s_j C_{j'}.$$
\begin{defn}  A collection $\CC$ is said to be in {\bf good position} if 
for all $s\in [0,1]^q$, 
$$\sspan\{B(s)\}_{\text{p.o.}} = \sspan\{B_1(s_1), \dots, B_q(s_q)\}_{\text{p.o.}}\ \in D.$$
\end{defn}

If $\CC$ is in good position, then relations (Inc-1) and (Inc-2) hold, and we obtain an oriented singular $q$-cube
\begin{equation}\label{S-param}
\rho_\CC:[0,1]^q \lra D, \qquad s = [s_1,\dots,s_q]\mapsto \sspan\{B_1(s_1), \dots, B_q(s_q)\}_{\text{p.o.}}\ \in D.
\end{equation}
with the $z^I$ as its vertices. 
Let $S(\CC)=\rho_\CC([0,1]^q)$ be its image in $D$.  Note that the most degenerate case, in which $C_j = C_{j'}$ for all $j$ and $S(\CC)$ is a point, is allowed.

From now on, unless stated otherwise, we assume that $\CC$ is in good position, so that $\rho_\CC$ and $S(\CC)$ are defined. 


As in \cite{massey}, we define the front  $j$-face
$$\a_j\rho_{\CC}: [0,1]^{q-1} \lra D, \qquad \a_j\rho_{\CC}(s_1,\dots,s_{q-1}) = \rho_{\CC}(s_1, \dots, s_{j-1}, 0,s_j, \dots, s_{q-1}),$$
and back $j$-face
$$\b_j\rho_{\CC}: [0,1]^{q-1} \lra D, \qquad \b_j\rho_{\CC}(s_1,\dots,s_{q-1}) = \rho_{\CC}(s_1, \dots, s_{j-1}, 1,s_j, \dots, s_{q-1}).$$
We write $\d_j^+S(\CC)$ (resp. $\d_j^-S(\CC)$) for the image of $\a_j\rho_{\CC}$ (resp. $\b_j\rho_{\CC}$), viewed as an oriented $(q-1)$-cube. 
With this convention, the boundary of the oriented $q$-cube $S(\CC)$ is given by 
\beq\label{bd-S}
\d S(\CC) = \sum_{j=1}^q (-1)^{j}\big(\d_j^+S(\CC) -\d_j^-S(\CC)).
\eeq

We define collections
\beq\label{C[j]}
\CC[j] =  \{\{C_{1\perp j},C_{1'\perp j}\}, \dots, \widehat{\{C_j,C_{j'}\}},\dots,\{C_{q\perp j},C_{q'\perp j}\}\}
\eeq
and 
\beq\label{C[j']}
\CC[j'] =  \{\{C_{1\perp j'},C_{1'\perp j'}\}, \dots, \widehat{\{C_j,C_{j'}\}},\dots,\{C_{q\perp j'},C_{q'\perp j'}\}\}
\eeq
of $(q-1)$ pairs of negative vectors in $V_j = C_j^\perp$ and $V_{j'}= C_{j'}^\perp$ respectively. 
\begin{lem}  If the collection $\CC$ is in good position for $V$ and $D$, then the collections $\CC[j]$ and $\CC[j']$ are in good position for 
$V_j$, $D(V_j)$ and $V_{j'}$, $D(V_{j'})$ respectively. 
\end{lem}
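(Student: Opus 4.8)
The plan is to fix an index $j$ and prove that $\CC[j]$ is in good position for $V_j = C_j^\perp$; the case of $\CC[j']$ will be entirely analogous, with $C_j$ replaced by $C_{j'}$ (equivalently, inserting $s_j = 1$ instead of $s_j = 0$ in the argument below). The first thing I would record is the key linearity observation: since the orthogonal projection $x \mapsto x_{\perp j}$ onto $V_j$ is linear, for each $i \neq j$ and $t_i \in [0,1]$ the spanning vectors of the singular cube attached to $\CC[j]$ satisfy
$$(1-t_i)\,C_{i\perp j} + t_i\,C_{i'\perp j} = \big((1-t_i)C_i + t_i C_{i'}\big)_{\perp j} = B_i(t_i)_{\perp j}.$$
Thus the tuple parametrizing $\CC[j]$ is exactly the orthogonal projection to $V_j$ of the corresponding tuple for $\CC$.

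Next I would fix an arbitrary $t = [t_1,\dots,\widehat{t_j},\dots,t_q] \in [0,1]^{q-1}$ and form $s \in [0,1]^q$ by inserting $s_j = 0$ in the $j$-th slot, so that $B_j(s_j) = C_j$ and $B_i(s_i) = B_i(t_i)$ for $i \neq j$. By the good position hypothesis for $\CC$, the tuple $B(s)$ spans a negative $q$-plane $z := \sspan\{B(s)\} \in D$, and by construction $C_j \in z$. I would then study the orthogonal projection $z \to z \cap C_j^\perp$, $w \mapsto w_{\perp j}$: since $Q(C_j)\neq 0$ and $C_j\in z$ we have the orthogonal decomposition $z = (z\cap C_j^\perp)\oplus \R C_j$, so this projection has kernel $\R C_j$ and image exactly the $(q-1)$-dimensional subspace $z\cap C_j^\perp$, which is negative definite (a subspace of the negative-definite $z$) and lies in $V_j$. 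Applying it to the basis $\{B_i(t_i): i\neq j\}\cup\{C_j\}$ of $z$ sends $C_j\mapsto 0$ and $B_i(t_i)\mapsto B_i(t_i)_{\perp j} = B_i^{[j]}(t_i)$; hence the $q-1$ vectors $\{B_i^{[j]}(t_i): i\neq j\}$ span $z\cap C_j^\perp$ and, being of the right number, are linearly independent. Therefore they span a negative $(q-1)$-plane in $V_j$, which is precisely the defining condition for good position of $\CC[j]$ at the point $t$. Specializing to $t_i\in\{0,1\}$ in particular shows that the projected endpoints $C_{i\perp j}, C_{i'\perp j}$ are genuine negative vectors, so that $\CC[j]$ really is a collection of pairs of negative vectors in $V_j$.

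The orientation and same-component requirements come for free: the assignment $t \mapsto \sspan\{B_i^{[j]}(t_i)\}_{\text{p.o.}}$ is continuous on the connected cube $[0,1]^{q-1}$, so its image lies in a single component of $D(V_j)$, and the ordering of the tuple pins down the orientation uniformly. The one step demanding genuine care is the identification $\sspan\{B_i^{[j]}(t_i): i\neq j\} = z\cap C_j^\perp$ — specifically the linear independence of the projected vectors — and this is exactly where the good position of $\CC$ is used, since it guarantees that $\{B_i(t_i): i\neq j\}\cup\{C_j\}$ is an honest basis of $z$. Everything else is a formal consequence of the linearity of projection and of restricting a negative-definite form to a subspace.
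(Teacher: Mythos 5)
Your proof is correct and follows essentially the same route as the paper: both arguments set $s_j=0$ (resp.\ $s_j=1$), use the good position of $\CC$ to see that $\{B_i(t_i)\}_{i\ne j}\cup\{C_j\}$ spans a negative $q$-plane containing $C_j$, and then observe that replacing the $B_i(t_i)$ by their projections $B_i(t_i)_{\perp j}$ does not change this span, so the projected vectors span the negative $(q-1)$-plane $z\cap C_j^\perp$ in $V_j$. Your write-up just makes the linear-independence and orientation points more explicit than the paper does.
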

\begin{proof}
Note that, if we set $s' = [s_1, \dots,  s_{q-1}] \in [0,1]^{q-1}$ and write
$\a_js'= [s_1, \dots, s_{j-1}, 0,s_j, \dots, s_{q-1}]$, then, since $\CC$ is in good position, 
\begin{align*}
\a_j\rho_{\CC}(s') &= \rho_{\CC}(\a_j s')\\
\nass
{}&= \sspan\{B_1(s'_1), \dots, B_{j-1}(s'_{j-1}),C_j, B_{j+1}(s'_j),\dots, B_q(s'_{q-1})\}_{\text{p.o.}}\\
\nass
{}&=\sspan\{B_1(s'_1)_{\perp j}, \dots, B_{j-1}(s'_{j-1})_{\perp j},C_j, B_{j+1}(s'_j)_{\perp j},\dots, B_q(s'_{q-1})_{\perp j}\}_{\text{p.o.}}\in D,
\end{align*}
which implies that $\CC[j]$ is in good position for $V_j$ and $D(V_j)$.  Similarly for $\CC[j']$. 
\end{proof}
We write $S(\CC[j])$ and $S(\CC[j'])$ for the corresponding oriented singular $(q-1)$-cubes in $D(V_j)$ and $D(V_{j'})$ with 
parametrizations analogous to (\ref{S-param}),
$$\rho_{\CC[j]}:[0,1]^{q-1}\lra D(V_j)$$ 
and 
$$\rho_{\CC[j']}:[0,1]^{q-1}\lra D(V_{j'}).$$   

In the notation defined in (\ref{defkappaj-jay}),  we let $\kappa_j = \kappa_{C_j}[j]$ and $\kappa_{j'} = \kappa_{C_{j'}}[j]$ so that 
\beq\label{wall-front}
\kappa_j\circ \rho_{\CC[j]} = \a_j\rho_{\CC},
\eeq
and
\beq\label{wall-back}
\kappa_{j'}\circ \rho_{\CC[j']} = \b_j\rho_{\CC}.
\eeq
Here the key point to note is that 
\begin{align*}
&\,\sspan\{B_1(s_1),\dots,B_q(s_q)\}_{\text{p.o.}}\vert_{s_j=0}\\
\nass
{}=&\, \sspan\{B_{1\perp j}(s_1), \dots , B_{(j-1)\perp j}(s_{j-1}), \uC_j,B_{(j+1)\perp j}(s_{j+1}),\dots,
B_{q\perp j}(s_q)\}_{\text{p.o.}}\\
\nass
{}=&\,\kappa_j\circ \rho_{\CC[j]}(s_1,\dots,\widehat{s_j},\dots,s_q),
\end{align*}
where, for example, 
$$B_{1\perp j}(s_1) = (1-s_1) C_{1\perp j}+s_1 C_{1'\perp j}.$$

\subsection{The regular case}

Following (6.5) of \cite{ABMP}, for a vector $x\in V$, let $\P_q(x,\CC)=\Psq_q(x,\CC)$ be as in (\ref{def-P-sq}), 
Recall that $\sgn(0)=0$.

Recall from \cite{kudla.thetaint} that a vector $x\in V$ is said to be {\bf regular} with respect to $\CC$ if $(x,C)\ne0$ for all $C\in \CC$. 
Parts (i) and (ii) of the following are an analogue of Lemma~4.2 in loc. cit. and the proofs given there extend immediately to the general case. 
Part (iii) will be proved in Appendix I, where the definition of the local intersection number will also be reviewed.  
\begin{lem}\label{old-lemma} Let $\CC$ be a collection in good position. \hfb
(i) If $x\in V$ is regular with respect to $\CC$, then $D_x\cap S(\CC)$ is non-empty if and only if $\P_q(x,\CC)\ne0$, and, in this case
$D_x\cap S(\CC)= \rho_\CC(s(x))$ 
for a unique point $s(x)\in (0,1)^q$ given by 
\beq\label{int-point}
s(x)_j = \frac{(x,C_j)}{(x,C_j)-(x,C_{j'})}.
\eeq
(ii) If $x\in V$ is any vector with $\P_q(x,\CC)\ne0$, then $D_x\cap S(\CC)$ consists of a single point $\rho_\CC(s(x))$ with 
$s(x)\in [0,1]^q$ given by (\ref{int-point}).\hfb
(iii)  If $x\in V$ is any vector with $\P_q(x,\CC)\ne0$ and $s(x)$ is as in (ii), then the map $\rho_\CC$ is immersive at $s(x)$, 
and the quantity $\P_q(x)$ is the local intersection number
of $D_x$ and $S(\CC)$ at $s(x)$. 
A precise definition of this quantity is given in (\ref{def-inter}) in section~\ref{append-2}. 
\end{lem}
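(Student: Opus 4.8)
The plan is to reduce the whole statement to $q$ independent one-variable affine equations and then to read off the combinatorics of $\Psq_q(x,\CC)$ from them.

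First I would observe that a point $\rho_\CC(s)\in S(\CC)$ lies in $D_x$ if and only if $x\perp \sspan\{B_1(s_1),\dots,B_q(s_q)\}$. Since $\CC$ is in good position, the vectors $B_j(s_j)$ are a basis of this plane, so the condition decouples into the $q$ scalar equations
$$f_j(s_j):=(x,B_j(s_j))=(1-s_j)(x,C_j)+s_j(x,C_{j'})=0,\qquad j=1,\dots,q,$$
each affine-linear in the single variable $s_j$ with slope $(x,C_{j'})-(x,C_j)$.

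For parts (i) and (ii) I would solve $f_j=0$. When the slope is nonzero the unique root is $s(x)_j=(x,C_j)/((x,C_j)-(x,C_{j'}))$, and a short sign analysis shows this root lies in $[0,1]$ precisely when $\sgn(x,C'_j)\ne\sgn(x,C_j)$ (the endpoints $0,1$ being hit exactly when $(x,C_j)$, resp. $(x,C_{j'})$, vanishes), and in the open interval $(0,1)$ when moreover neither inner product vanishes, i.e. when $x$ is regular. Since the $j$-th factor of $\Psq_q(x,\CC)$ is nonzero under exactly this sign condition, taking the product over $j$ gives $D_x\cap S(\CC)\ne\emptyset \iff \Psq_q(x,\CC)\ne0$, and in that case the intersection is the single point $\rho_\CC(s(x))$. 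Good position guarantees $\rho_\CC(s(x))\in D$, and since $\rho_\CC(s(x))^\perp$ is positive definite and contains $x\ne0$, one gets $Q(x)>0$ for free, so $D_x$ is nonempty as required.

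For part (iii), writing $z_0=\rho_\CC(s(x))$ and identifying $T_{z_0}D\cong\mathrm{Hom}(z_0,z_0^\perp)$, the partial derivative $\partial_{s_j}\rho_\CC$ at $s(x)$ is the homomorphism $B_j^\ast\otimes \pr_{z_0^\perp}(C_{j'}-C_j)$ sending $B_j(s(x)_j)\mapsto \pr_{z_0^\perp}(C_{j'}-C_j)$ and the other basis vectors to $0$. Because the slope $(x,C_{j'}-C_j)\ne0$ whenever $\Psq_q(x,\CC)\ne0$, we have $C_{j'}-C_j\notin x^\perp\supseteq z_0$, hence $\pr_{z_0^\perp}(C_{j'}-C_j)\ne0$; the $q$ partials involve distinct factors $B_j^\ast$ and nonzero targets, so they are linearly independent and $\rho_\CC$ is immersive at $s(x)$. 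Projecting further onto the normal bundle $N_{z_0}D_x\cong\mathrm{Hom}(z_0,\langle x\rangle)$ sends $\partial_{s_j}\rho_\CC\mapsto \frac{(x,C_{j'}-C_j)}{(x,x)}\,(B_j^\ast\otimes x)$, a diagonal isomorphism, so $S(\CC)$ and $D_x$ meet transversally at $z_0$.

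Finally, the local intersection number is the sign of the determinant of this isomorphism $T_{z_0}S(\CC)\xrightarrow{\ \sim\ }N_{z_0}D_x$, computed against the orientation $\partial_{s_1}\wedge\cdots\wedge\partial_{s_q}$ of $S(\CC)$ and the co-orientation of $D_x$ furnished by $x$. That determinant equals $\prod_j (x,C_{j'}-C_j)$ up to the positive factor $(x,x)^{-q}$, so its sign is $\prod_j \sgn((x,C_{j'})-(x,C_j))=\prod_j \tfrac12(\sgn(x,C'_j)-\sgn(x,C_j))=\Psq_q(x,\CC)$, which is $\pm1$ in the regular (interior) case. When $s(x)$ lies on $\partial[0,1]^q$ — exactly when some $(x,C_j)$ or $(x,C_{j'})$ vanishes — the corresponding factor degenerates to $\pm\tfrac12$, and one checks this matches the boundary-weighted local intersection number of (\ref{def-inter}). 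I expect the main obstacle to be precisely this last step: fixing the orientation conventions on $D$, $D_x$, and its normal bundle so that the determinant sign comes out to $\Psq_q(x,\CC)$ on the nose, together with the careful bookkeeping in the boundary case.
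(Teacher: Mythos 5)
Your proposal is correct and follows essentially the same route as the paper: parts (i)--(ii) by decoupling $x\perp\rho_\CC(s)$ into the $q$ affine conditions $(x,B_j(s_j))=0$ (the argument the paper imports from Lemma~4.2 of \cite{kudla.thetaint}), and part (iii) by pairing the partials of $\rho_\CC$ with the normal frame $[0,\dots,x,\dots,0]$ of $D_x$ to obtain the same diagonal matrix with entries $(x,C_{j'})-(x,C_j)$ that appears in Appendix~I, the only difference being that you work in $\mathrm{Hom}(z_0,z_0^\perp)$ where the paper uses the orthonormal frame bundle. The one step you leave open, the $2^{-r}$ weighting when $s(x)$ lies on the boundary of $[0,1]^q$, is in the paper simply the content of the definition (\ref{def-inter}), so nothing further needs to be checked there.
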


Again, as in \cite{kudla.thetaint}, we say that $\CC$ is in {very good position} if it is in good position and $\rho_\CC$ is an embedding, i.e., an injective immersion. 
The following is easily checked by the general version of the calculation in section~6.3 of \cite{kudla.thetaint}.
\begin{lem}\label{embedding-lemma}
If $\CC$ is in good position and the $2^q$ vectors in $\CC$ are linearly independent, then $\CC$ is in very good position. 
\end{lem}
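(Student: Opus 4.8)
The plan is to establish the two defining properties of an embedding---injectivity and immersivity of $\rho_\CC$ on the closed cube $[0,1]^q$---separately. The one structural input I would extract from the hypothesis is that linear independence of the vectors in $\CC$ makes the two-planes $P_j = \sspan\{C_j, C_{j'}\}$ linearly independent, so that $P_1\oplus\cdots\oplus P_q \subset V$ is a direct sum. The key consequence I record first is that, for every $s\in[0,1]^q$, the negative $q$-plane $z=\rho_\CC(s)$ meets each $P_j$ in exactly the line $\sspan\{B_j(s_j)\}$: since $B_i(s_i)\in P_i$, any element $\sum_i a_i B_i(s_i)$ of $z$ lying in $P_j$ must have vanishing $P_i$-components for $i\ne j$, and as each $B_i(s_i)$ is a nonzero combination of the independent pair $C_i,C_{i'}$, this forces $a_i=0$ for $i\ne j$.

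For injectivity I would suppose $\rho_\CC(s)=\rho_\CC(t)$ as (oriented) negative $q$-planes. Then $B_j(t_j)\in \rho_\CC(s)\cap P_j = \sspan\{B_j(s_j)\}$ by the structural fact, so $B_j(t_j)=\lambda_j B_j(s_j)$ for some scalar $\lambda_j$. Expressing both sides in the basis $C_j, C_{j'}$ of $P_j$ and using that the two coordinates of any $B_j(s)=(1-s)C_j+sC_{j'}$ sum to $1$, one gets $\lambda_j=1$ and hence $t_j=s_j$. This yields $s=t$; note that the argument uses only the underlying unoriented spans, so the orientation plays no role.

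For immersivity I would compute the differential via the standard identification $T_zD\cong\operatorname{Hom}(z,z^\perp)$. Varying only $s_j$ moves the single frame vector $B_j(s_j)$ with velocity $B_j'(s_j)=C_{j'}-C_j$, so $d\rho_\CC(\partial_{s_j})$ is the homomorphism sending $B_j(s_j)$ to the orthogonal projection of $C_{j'}-C_j$ onto $z^\perp$ and annihilating $B_i(s_i)$ for $i\ne j$. These $q$ homomorphisms are linearly independent exactly when each such projection is nonzero, i.e.\ when $C_{j'}-C_j\notin z$. But $C_{j'}-C_j\in P_j$, so $C_{j'}-C_j\in z$ would force $C_{j'}-C_j\in\sspan\{B_j(s_j)\}$; comparing coordinates in $P_j$ (the coordinate sum of $C_{j'}-C_j$ is $0$, while that of any multiple $\mu B_j(s_j)$ is $\mu$) gives $\mu=0$ and the contradiction $C_{j'}-C_j=0$. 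Hence $d\rho_\CC$ has rank $q$ at every $s\in[0,1]^q$, boundary points included.

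The argument is elementary once the direct-sum structure is available; the step needing the most care is the differential computation, namely pinning down the tangent-space identification for the oriented negative Grassmannian and the resulting linear-independence criterion, and checking that the normalization $B_j(s)=(1-s)C_j+sC_{j'}$ is precisely what forces both $\lambda_j=1$ in the injectivity step and the non-collinearity of $C_{j'}-C_j$ with $B_j(s_j)$ in the immersivity step. Injectivity together with immersivity on the compact cube show that $\rho_\CC$ is an embedding, so $\CC$ is in very good position.
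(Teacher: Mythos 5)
Your argument is correct and complete. Note that the paper itself gives no proof of this lemma, deferring to ``the general version of the calculation in section~6.3 of \cite{kudla.thetaint}''; your write-up supplies a self-contained argument. Your immersivity computation is essentially the one the paper carries out in Appendix~I for part (iii) of Lemma~\ref{old-lemma}: there the tangent vector $(\rho_\CC)_*(\partial/\partial s_j)$ is likewise identified (up to the harmless frame change $P^{-1}$) with the tuple having $\pr_{U(z)}(C_{j'}-C_j)$ in the $j$th slot, and linear independence reduces to the nonvanishing of these projections. The difference is in how that nonvanishing is certified: the paper's appendix gets it (at the special point $s(x)$) by pairing against $\eta(x,j)$ and using regularity of $x$, whereas you get it at \emph{every} $s$ from the hypothesis of linear independence, via the direct sum $P_1\oplus\cdots\oplus P_q$ and the observation that the $\{C_j,C_{j'}\}$-coordinates of $B_j(s_j)$ sum to $1$ while those of $C_{j'}-C_j$ sum to $0$. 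That same coordinate-sum device cleanly handles injectivity, which is the part the lemma actually adds over immersivity. Two small points worth keeping explicit: differentiability at boundary points of $[0,1]^q$ uses the extension of $\rho_\CC$ to a neighborhood of the cube (as the paper does in Appendix~I), and the ``$2^q$'' in the statement should be read as the $2q$ vectors $C_1,C_{1'},\dots,C_q,C_{q'}$, which is the reading you adopt.
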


\section{Cubical Integrals and generalized error functions}

In this section, we state our main result, an explicit expression for the Schwartz function (\ref{S-int-KM}) defined by the integral 
$$I(x;\CC) := \int_{S(\CC)}\ph_{KM}(x)$$
of the $q$-form $\ph_{KM}(x)$ over the singular $q$-cube $S(\CC)$ in $D$  in terms of generalized error functions,  as suggested in section 5 of \cite{kudla.thetaint}.
Recall, \cite{ABMP}, (6.1), that, for a collection of vectors negative $\bC=\{C_1,\dots,C_q\}$ spanning an oriented $q$-plane $z\in D$, 
and for $x\in V$, the generalized error function is given by the integral
\beq\label{def.gen.error}
E_q(\{C_1,\dots,C_q\};x)  = \int_{z} e^{\pi (y-\pr_z(x),y-\pr_z(x))}\,\sgn(C_1,y)\,\sgn(C_2,y)\dots\sgn(C_q,y)\,dy,
\eeq
where the measure $dy$ is normalized so that
$$\int_z e^{\pi (y,y)}\,dy =1.$$
We will frequently abbreviate this to $E_q(\bC;x)$ and write
\beq\label{def.sgnCx}
\sgn(\bC;y) = \sgn(C_1,y)\,\sgn(C_2,y)\dots\sgn(C_q,y).
\eeq

Our main result is the following explicit formula for $I(x;\CC)$. 
\begin{theo}\label{main.theo.1} Suppose that $\CC$ is in good position. Then 
\beq\label{inductive.conjecture}
I(x;\CC) =  (-1)^q\,2^{-q}\sum_I (-1)^{|I|}\, E_q(\bC^I;x\sqrt{2})\,e^{-\pi (x,x)},
\eeq
where, as in section~\ref{subsec4.1}, for a subset $I\subset \{1,\dots,q\}$, 
$\bC^I$ is the $q$-tuple with $\bC^I_j=C_j$ if $j\notin I$ and $\bC^I_j=C_{j'}$ if $j\in I$, ordered by the index $j$.
\end{theo}

The $2^q$ terms in the sum on the right side of (\ref{inductive.conjecture}) are generalized error functions associated to the vertices 
$z^I =\sspan\{\bC^I\}_{\text{p.o.}}$ of $S(\CC)$ of the singular $q$-cube evaluated on the projections of $x$ to those $q$-planes. 

\begin{rem}
In the case $q=2$, the expression given in Theorem~\ref{main.theo.1} is the negative of the expression found in \cite{kudla.thetaint}.   But there is a simple explanation, namely the 
orientation of $S(\CC)$ used there is defined by the `loop' in (3.11), but this is the opposite of the orientation we use here, defined by the singular square $\rho_\CC$.
\end{rem}

The proof of Theorem~\ref{main.theo.1} by induction on $q$ is given in section~\ref{proof-Thm-4.1}.  

\section{Review of the Schwartz form $\ph_{KM}$ and its relatives}\label{section5}


In this section, we review the basic facts about the Schwartz forms $\ph_{KM}(x)$ which we need. 

\subsection{Local formulas}\label{local-formulas}
We fix a base point $z_0\in D$ and 
an orthonormal basis $\{e_1,\dots,e_m\}$, $m=p+q$,
$(e_r,e_s) = \e_r \delta_{rs}$, $\e_r =+1$ for $1\le r\le p$ and $\e_r=-1$ for $r>p$, 
with 
$$z_0 = \sspan\{e_{p+1}, \dots, e_m\}_{\text{p.o.}}.$$
In particular
$V\simeq \R^m$, and the Gaussian is given by 
\beq\label{fix-coord}
\ph_0(x)=\ph_0(x,z_0) = e^{-\pi \sum_j x_j^2}\ \in \mathcal S(V),\qquad x = \sum_i x_i e_i.
\eeq
Let $K$ be the stabilizer of $z_0$ in $G$ and write $\g_o = \Lie(G) = \kk_o+\pp_o$ 
where $\kk_o=\Lie(K)$ and $\pp_o$ are the $+1$ and $-1$ eigenspace for the Cartan involution at $z_0$. 
There is a canonical isomorphism $T_{z_0}(D) \simeq \pp_0$.
Under the idenitfication 
$$V\tt V \isoarrow \End(V), \quad (v_1\tt v_2)(v) = (v_2,v) v_1,$$
a basis for $\pp_0$ is given by
$$X_{\a\mu} = e_\a\tt e_\mu + e_\mu\tt e_\a, \qquad 1\le \a\le p< \mu\le p+q.$$
Let $\o_{\a\mu}$ be the dual basis for $\pp_o^*$. 

By the equivariance property (\ref{KM-equi}), $\ph_{KM}(x)$ is determined by the element of the complex
$$[\mathcal S(V)\tt \sideset{}{^{\bullet}}{\bbigwedge}(\pp_o^*)]^K$$
obtained by restriction to the point $z_0$. 

For $1\le s, t\le q$, let
$$\o(s) = \sum_{j=1}^p x_j \,\o_{j,p+s}\ \in \pp_o^*,$$
and 
$$\O(s,t) = \sum_{j=1}^p \o_{j,p+s}\wedge \o_{j,p+t}\ \in \sideset{}{^{2}}\bbigwedge(\pp_o^*).$$
For  $\l$ with $0\le \l\le [q/2]$, we define $q$-forms
\beq\label{AOlambda}
\AO_\l(q)=A\big[\,\o(1)\wedge\dots \wedge \o(q-2\l)\wedge \O(q-2\l+1,q-2\l+2)\wedge \dots\wedge \O(q-1,q)\,\big],
\eeq
where 
$A$ is the alternation
\beq\label{alternation}
A\big[\,\o(1)\wedge\dots\wedge \O(t-1,t)\,\big] = \frac{1}{t!} \sum_{\s\in S_t} \sgn(\s)\, \o(\s(1))\wedge\dots \wedge \O(\s(t-1),\s(t)).
\eeq
Note that these are homogeneous of degree $q-2\l$ in $x$, and it will sometimes be useful to write $\AO_\l(q)(x)$ to indicate this dependence.
With this notation, we have the following formula for the restriction of $\ph_{KM}(x)$ at the point $z_0$, cf. \cite{KM.I} p. 371,
\begin{equation}\label{ph-formula}
\ph_{KM}(x) = 2^{q/2}\sum_{\l=0}^{[q/2]} C(q,\l)\,\AO_\l(q)(x)\,\ph_0(x),
\end{equation}
where 
\beq\label{Ctl}
C(t,\l) = (-\frac{1}{4\pi})^\l\frac{t!}{2^\l \l! (t-2\l)!}.
\eeq

There are two auxiliary $q-1$ forms associated to $\ph_{KM}(x)$ which will play a fundamental role in our calculations. 
We will recall their relation to $\ph_{KM}(x)$ in a moment. 
The first of these
is given by
\beq\label{psi0-formula}
\psi_{KM}(x) = 2^{q/2-1}\sum_{\l=0}^{[(q-1)/2]} \sum_{s=1}^q {(-1)^{s} x_{p+s}}\,C(q-1,\l)\,\AO_\l(q;s)(x)\,\ph_0(x),
\eeq
where the $(q-1)$-form $\AO_\l(q;s)$ is defined by the alternation analogous to $\AO_\l(q-1)$ but for the index set 
$\{1, \dots, \hat{s}, \dots, q\}$ replacing $\{1, \dots,q-1\}$. For example, $\AO_\l(q,q) = \AO_\l(q-1)$. 

Now we include the parameter $\tau = u+iv$. 
Writing
\begin{align*}
\ph_{KM}(x) &= \ph_{KM}^0(x)\,e^{-\pi(x,x)},\\
\nass
\psi_{KM}(x)& = \psi^0_{KM}(x)\,e^{-\pi(x,x)},
\end{align*}
we have, for $\qq= e(\tau)$ and $Q(x) = \frac12(x,x)$,  
\begin{align}
\ph_{KM}(\tau,x) &= \ph_{KM}^0(v^{\frac12}x)\,\qq^{Q(x)}= v^{-\frac{p+q}4} \o(g'_\tau)\ph_{KM}(x),\\
\noalign{and}
\psi_{KM}(\tau,x) & = 
v\,\psi_{KM}^0(v^{\frac12}x)\,\qq^{Q(x)}.\label{psi0-to-psi}
\end{align}

Note that 
$$-2i\,\frac{\d}{\d \bar\tau}\ph_{KM}(\tau,x)  = \frac{\d}{\d v}\big\{\ph_{KM}^0(v^{\frac12}x)\big\}\,\qq^{Q(x)}.$$

On the set of $x$ such that $R(x,z_0)\ne 0$, let
\beq\label{def-Psi-0}
\Ps_{KM}^0(x) = -\int_{1}^\infty \psi^0(t^{\frac12}x)\,t^{-1}\,dt.
\eeq
The point here is that
$$\psi^0(t^{\frac12}x) =\big(\,\text{form valued poly in $t^{\frac12}x$}\,\big) \cdot e^{-2\pi t R(x,z_0)},$$
so that the integral only makes sense when $R(x,z_0)>0$. 
For $x$ with $R(x,z_0)>0$, let
\beq\label{def-Psi}
\Psi_{KM}(\tau,x) := \Psi^0_{KM}(v^{\frac12}x)\,\qq^{Q(x)}=-\int_v^{\infty}\psi^0(t^{\frac12}x)\,t^{-1}\,dt\ \qq^{Q(x)}.
\eeq

The following basic relations between the primitives $\psi_{KM}(\tau,x)$, $\Psi_{KM}(\tau,x)$ and the form $\ph_{KM}(\tau,x)$
are given in   \cite{FK-I}, Section 3, Proposition~3.2.
\begin{lem}\label{lem2.3} (i) 
$$
-2i v^2 \frac{\d}{\d \bar\tau}\ph_{KM}(\tau,x) = d \psi(\tau,x) = v\,d\psi^0_{KM}(v^{\frac12}x)\,\qq^{Q(x)}.
$$
(ii)
$$d\Psi_{KM}(\tau,x) = \ph_{KM}(\tau,x),\qquad R(x,z_0)\ne 0,$$
and
$$d\Psi_{KM}^0(x) = \ph_{KM}^0(x),\qquad R(x,z_0)\ne 0.$$
\end{lem}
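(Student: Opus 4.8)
The plan is to reduce all three assertions to a single transgression identity relating $\psi^0_{KM}$ to the radial (Euler) scaling of $\ph^0_{KM}$, and then to obtain (i) and (ii) by formal manipulations with the substitution $x\mapsto t^{1/2}x$. The one computation that does the real work is the claim that, on the locus $R(x,z_0)\ne 0$ (equivalently, away from $D_x$),
$$d\,\psi^0_{KM}(x) = \tfrac12\,\mathcal E\,\ph^0_{KM}(x), \qquad \mathcal E = \sum_i x_i\,\frac{\d}{\d x_i}.$$
Here $d$ is the exterior derivative on $D$ and $\mathcal E$ the Euler operator in the Schwartz variable $x$; I will call this the \emph{key identity}.

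Granting the key identity, part (i) is essentially immediate. The text already records that $-2i\,\frac{\d}{\d\bar\tau}\ph_{KM}(\tau,x) = \frac{\d}{\d v}\{\ph^0_{KM}(v^{1/2}x)\}\,\qq^{Q(x)}$. Since $\frac{\d}{\d v}\ph^0_{KM}(v^{1/2}x) = \frac{1}{2v}(\mathcal E\ph^0_{KM})(v^{1/2}x) = \frac1v\,(d\psi^0_{KM})(v^{1/2}x)$ by the key identity, multiplying by $v^2$ gives $v\,(d\psi^0_{KM})(v^{1/2}x)\,\qq^{Q(x)}$; as $d$ on $D$ commutes with multiplication by the scalar $v\,\qq^{Q(x)}$, this equals $d\big(v\,\psi^0_{KM}(v^{1/2}x)\,\qq^{Q(x)}\big) = d\psi(\tau,x)$, which is exactly (i).

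For part (ii), differentiating (\ref{def-Psi-0}) under the integral sign (legitimate because $d$ acts only in the $D$-directions while the integral is over $t$) and inserting the key identity yields
$$d\,\Psi^0_{KM}(x) = -\int_1^\infty (d\psi^0_{KM})(t^{1/2}x)\,t^{-1}\,dt = -\int_1^\infty \tfrac12(\mathcal E\ph^0_{KM})(t^{1/2}x)\,t^{-1}\,dt = -\int_1^\infty \frac{d}{dt}\big\{\ph^0_{KM}(t^{1/2}x)\big\}\,dt,$$
which telescopes to $\ph^0_{KM}(x)$ once the boundary term at $t=\infty$ is shown to vanish. This is precisely where $R(x,z_0)>0$ is used: since $R(t^{1/2}x,z_0)=t\,R(x,z_0)$, the Gaussian factor $e^{-2\pi t R(x,z_0)}$ dominates the polynomial form-part of $\ph^0_{KM}(t^{1/2}x)$ and forces it to $0$. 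The relation $d\Psi_{KM}(\tau,x)=\ph_{KM}(\tau,x)$ then follows from $\Psi_{KM}(\tau,x)=\Psi^0_{KM}(v^{1/2}x)\,\qq^{Q(x)}$ by the same ``commute $d$ past the scalar $\qq^{Q(x)}$'' argument.

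The main obstacle is therefore the key identity itself, a statement about $K$-invariant elements of $\mathcal S(V)\tt\bbigwedge^\bullet(\pp_o^*)$. Here I would use $G$-equivariance to reduce to evaluating both sides at the base point $z_0$ for all $x$: both $d\psi^0_{KM}(x)$ and $\mathcal E\ph^0_{KM}(x)$ are equivariant $q$-forms (the Euler operator is $\OO(V)$-invariant), hence determined by their values at $z_0$. To evaluate $d$ of an equivariant form at $z_0$ one translates the $z$-derivatives into the infinitesimal action of $\pp_o$ on the Schwartz argument via $g^*\psi^0_{KM}(x)=\psi^0_{KM}(g^{-1}x)$, so that $d$ becomes an explicit first-order operator built from the $X_{\a\mu}$ acting on $\ph_0$ and on the polynomial coefficients, wedged with the dual forms $\o_{\a\mu}$. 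I would then substitute the explicit expressions (\ref{ph-formula}) and (\ref{psi0-formula}) and check the identity degree by degree in $\l$, using the combinatorics of the alternation $A$ and the relations among the $\o(s)$ and $\O(s,t)$; the factors $x_{p+s}$ carried by $\psi^0_{KM}$ are exactly what the $\pp_o$-action produces upon differentiating the Gaussian in the negative directions. This explicit Lie-algebra computation --- the Kudla--Millson transgression at the base point --- is the technical crux, while everything else is formal.
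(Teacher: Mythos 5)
Your formal scaffolding is correct and is essentially the paper's own argument run in the opposite order: the paper first establishes part (i) and then reads off your ``key identity'' $d\psi^0_{KM}(v^{\frac12}x)=v\,\frac{\d}{\d v}\ph^0_{KM}(v^{\frac12}x)$ as a consequence, whereas you take that identity as the starting point; the two are equivalent via the elementary observation $-2i\,\frac{\d}{\d\bar\tau}\ph_{KM}(\tau,x)=\frac{\d}{\d v}\{\ph^0_{KM}(v^{\frac12}x)\}\,\qq^{Q(x)}$, which both of you use. Your derivation of (ii) — differentiate (\ref{def-Psi-0}) under the integral sign, convert $\frac12 t^{-1}(\mathcal E\ph^0_{KM})(t^{\frac12}x)$ into $\frac{d}{dt}\ph^0_{KM}(t^{\frac12}x)$, and telescope, with the boundary term at $t=\infty$ killed by $e^{-2\pi tR(x,z_0)}$ when $R(x,z_0)>0$ — is exactly the computation in the paper's proof. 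So far, so good.

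The gap is that the key identity itself, which is the entire mathematical content of the lemma, is never actually verified: you state a strategy (evaluate at $z_0$, translate $d$ into the $\pp_o$-action, substitute (\ref{ph-formula}) and (\ref{psi0-formula}), and check the alternation combinatorics degree by degree in $\l$) but do not carry it out, and this direct Schr\"odinger-model check is genuinely delicate — the operator $d$ at the base point produces terms from differentiating the Gaussian in the \emph{positive} directions as well, and the cancellations among the $\o(s)$, $\O(s,t)$ and the coefficients $C(q,\l)$, $C(q-1,\l)$ are exactly where a sign or normalization error would hide. The paper does not do the computation this way: it establishes the identity in the form $\bar\d\,\ph_{KM}=d\,\psi_{KM}$ at the level of the Lie-algebra cohomology complex by passing to the \emph{Fock model} (this is Proposition~3.2 of \cite{FK-I}, following \cite{KM.IHES}), where $\ph_{KM}$ and $\psi_{KM}$ become the simple polynomial forms $\tio(1)\wedge\dots\wedge\tio(q)$ and $\sum_s(-1)^{s-1}z_{p+s}\,\tio(1)\wedge\dots\widehat{\tio(s)}\dots\wedge\tio(q)$, the differential $d$ has an explicit two-term expression, and the identity reduces to a short combinatorial cancellation. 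If you want a self-contained proof you should either reproduce that Fock-model computation or genuinely push through the Schr\"odinger-model check; as written, the crux is asserted rather than proved.
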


Taking homogeneity in $x$ of various terms into account and writing $R=R(x,z_0)$, we have the explicit formulas
\begin{align}\label{Psi-formula-corrected}
\Psi_{KM}(\tau,x) &= 2^{\frac{q}2-1}\sum_{\l=0}^{[(q-1)/2]} \sum_{s=1}^q  {(-1)^{s-1} x_{p+s}}\,C(q-1,\l)\,\AO_\l(q;s)\,\\
\nass
{}&\qquad\qquad\qquad\times (2\pi R)^{-\frac12(q-2\l)}\,\Gamma\big(\frac12(q-2\l), 2\pi R v\big)\,\qq^{Q(x)}.\notag
\end{align}
and
\begin{equation}\label{ph-formula-tau}
\ph_{KM}(\tau,x) = 2^{q/2}\sum_{\l=0}^{[q/2]} C(q,\l)\,\AO_\l(q)\,v^{\frac12(q-2\l)}\,e^{-2\pi v R}\,\qq^{Q(x)}.
\end{equation}

\subsection{Global formulas}

We now explain how the 
formulas of the previous section define global differential forms on $D$.  We will use the notation and conventions explained in \cite{kudla.thetaint}, 
especially the Appendix, which we now briefly recall. 

Let
$$\FD = \{\ \zeta = [\zeta_1,\dots,\zeta_q]\in V^q\mid  (\zeta,\zeta) := ((\zeta_i,\zeta_j))  <0\,\},$$
be the bundle of oriented negative frames, and let
$$\OFD = \{ \ \zeta = [\zeta_1,\dots,\zeta_q]\in V^q\mid (\zeta,\zeta) =-1_q\, \},$$
 be the bundle of oriented orthonormal negative frames. Let $\pi: \FD\rightarrow D$ be the natural projection, taking $\zeta$ to its oriented span.
 Then, for $\zeta\in \OFD$, we have an identification of tangent spaces
 $$V^q \simeq T_\zeta(\FD) \supset T_\zeta(\OFD) = \{\,\eta = [\eta_1,\dots,\eta_q] \in V^q \mid (\eta,\zeta) + (\zeta,\eta)=0\,\}.$$
 For $z\in D$, we let $U(z)=z^\perp$. Then the `horizontal' subspace $U(z)^q \subset T_\zeta(\OFD)$ is identified with $T_z(D)$ under $d\pi_\zeta$. 
 Note that, while the space $U(z)^q$ depends only on $z$, the identification with $T_z(D)$ depends on $\zeta$. The identifications for different choices of $\zeta$
 differ by the action of $\SO(q)$. 
 

A priori, the expressions given in (\ref{ph-formula-tau}) and (\ref{Psi-formula-corrected}) are elements of $S(V)\tt \bbigwedge^{r}(\pp_o^*)$
with $r= q$ and $q-1$ respectively,  where $\pp_o$ is identified with the tangent space to $D$ at the base point 
$$z_0=\sspan\{e_{p+1},\dots,e_{p+q}\}_{\text{p.o.}} \in D$$ 
determined by our chosen orthonormal basis. They yield global formulas as follows.  For any $\zeta\in \OFD$, the function $R(x,z)$ is defined by
$R(x,z) = (x,\zeta)(\zeta,x)$.  
For vectors $\eta = [ \eta_1, \dots, \eta_q]$ and $\mu= [\mu_1,\dots,\mu_q]$ in $U(z)^q$,  
define 
\beq\label{define-forms}
\o(s)(\eta) = (x,\eta_s),\qquad \O(s,t)(\eta,\mu) = (\eta_s,\mu_t)- (\eta_t,\mu_s).
\eeq
Also note that, in the global version of (\ref{Psi-formula-corrected}), 
\beq\label{sign-shift}
x_{p+s} = -(x,\zeta_s).
\eeq

\begin{lem} With these definitions, the $q$-forms $\AO_\l(q)$ and $q-1$-forms $\AO_\l(q;s)$ on $U(z)^q$ are invariant under $\SO(q)$
and hence define forms on $T_{z}(D)$. 
\end{lem}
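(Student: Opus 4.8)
The plan is to deduce everything from the behaviour of the two building blocks $\o(s)$ and $\O(s,t)$ of (\ref{define-forms}) under a change of oriented orthonormal frame $\zeta\mapsto\zeta k$, $k\in\SO(q)$. Since the two trivialisations $d\pi_\zeta$ and $d\pi_{\zeta k}$ of $T_z(D)\cong U(z)^q$ differ by the right action $\eta\mapsto\eta k$ on $U(z)^q$, substituting into the definitions gives $\o(s)\mapsto\sum_a k_{as}\,\o(a)$ and $\O(s,t)\mapsto\sum_{a,b}k_{as}k_{bt}\,\O(a,b)$; thus $\o(\bullet)$ transforms in the standard representation of $\SO(q)$ and $\O(\bullet,\bullet)$ in its second exterior power. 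I would also record that the coefficient $x_{p+s}=-(x,\zeta_s)$ of (\ref{sign-shift}) transforms by the very same matrix, $x_{p+s}\mapsto\sum_a k_{as}\,x_{p+a}$. These three laws are the only input.

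For the $q$-forms $\AO_\l(q)$ the argument is then purely formal. By definition $\AO_\l(q)=A[T]$ is the total alternation over $S_q$ of the template $T=\o(1)\wedge\cdots\wedge\o(q-2\l)\wedge\O(q-2\l+1,q-2\l+2)\wedge\cdots\wedge\O(q-1,q)$, whose $q$ slots exhaust the indices $1,\dots,q$. Inserting the transformation laws and collecting terms, the coefficient of a monomial $T(a_1,\dots,a_q)$ is $\sum_{\sigma\in S_q}\sgn(\sigma)\prod_i k_{a_i,\sigma(i)}=\det(k_{a_i,j})_{i,j}$. This determinant vanishes unless $(a_1,\dots,a_q)$ is a permutation $a$ of $(1,\dots,q)$, where it equals $\sgn(a)\det(k)$; resumming and using $\det(k)=1$ returns $\AO_\l(q)$. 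Hence $\AO_\l(q)$ is $\SO(q)$-invariant and descends to a $q$-form on $T_z(D)$. This $\det(k)=1$ cancellation is the mechanism behind the whole lemma.

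For the $(q-1)$-forms $\AO_\l(q;s)$ the same substitution now runs over only the $q-1$ indices of $\hat s=\{1,\dots,q\}\setminus\{s\}$, while the $\SO(q)$-action feeds in the omitted index $s$; the resulting coefficients are therefore $(q-1)\times(q-1)$ minors of $k$, and one finds $\AO_\l(q;s)\mapsto\sum_t c_{ts}\,\AO_\l(q;t)$, with $c_{ts}$ a signed minor of $k$ on rows $\hat t$ and columns $\hat s$. By the identity $\mathrm{adj}(k)=k^{-1}\det(k)=k^{T}$ for $k\in\SO(q)$ these minors are the entries $(-1)^{t+s}k_{ts}$, so the tuple $(\AO_\l(q;s))_s$ carries the standard representation. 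This is exactly the invariance the lemma needs in order to descend: the tuple defines an $\SO(q)$-equivariant ($\R^q$-valued) $(q-1)$-form on $T_z(D)$, and pairing it with the equally equivariant vector $(x_{p+s})_s$ produces a genuine scalar invariant. Concretely, I would introduce the total $S_q$-alternation $\widetilde{\AO}_\l$ of the $q$-slot template in which one $\o$-slot is replaced by the coefficient vector $x_{p+\bullet}$; Laplace expansion along that slot gives $\widetilde{\AO}_\l=\sum_s(-1)^{s-1}x_{p+s}\,\AO_\l(q;s)$, the combination occurring in (\ref{psi0-formula}) and (\ref{Psi-formula-corrected}), while the argument of the previous paragraph (now with both the $\o$-slots and the $x_{p+\bullet}$-slot transforming by the standard representation) shows at once that $\widetilde{\AO}_\l$ is $\SO(q)$-invariant. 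Thus the $(q-1)$-form that actually enters $\psi_{KM}$ and $\Psi_{KM}$ is well defined on $T_z(D)$.

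The routine part is the $\AO_\l(q)$ computation once the determinant mechanism is in place. I expect the real work to be the sign and indexing bookkeeping in the $(q-1)$-form case: identifying each surviving $(q-1)$-minor with the correct member $\AO_\l(q;t)$, controlling the reordering signs forced by the sorted-index convention in the alternation $A$, and matching the Laplace-expansion signs with those in (\ref{Psi-formula-corrected}). Once $\widetilde{\AO}_\l$ is recognised as a full $q$-slot alternation, its invariance is immediate from $\det(k)=1$.
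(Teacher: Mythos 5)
Your proof is correct, and for the $q$-forms it is the expanded version of the paper's one-line argument: the paper simply observes that $\AO_\l(q)(\eta^1,\dots,\eta^q)$ equals a nonzero constant times the $q\times q$ determinant whose first $q-2\l$ columns have scalar entries $(x,\eta^i_j)$ and whose remaining columns have vector entries $\eta^i_j$ (vector products taken via $(\ ,\ )$), so that a frame change $\zeta\mapsto\zeta k$ transforms the columns by $k$ and multiplies the whole expression by $\det k=1$ --- exactly your determinant mechanism, packaged in closed form rather than extracted from the alternation as you do. Where you genuinely go beyond the paper is the $(q-1)$-form case, which the paper's proof does not address at all, and here your analysis in fact corrects the lemma's literal claim: the individual forms $\AO_\l(q;s)$ are \emph{not} $\SO(q)$-invariant (already for $q=2$, $\AO_0(2;1)=\o(2)$, which a rotation mixes with $\o(1)$). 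As you compute via the cofactor identity $\det(k_{\hat t,\hat s})=(-1)^{t+s}k_{ts}$ for $k\in \SO(q)$, the tuple $(\AO_\l(q;s))_s$ is merely equivariant for the standard representation, matching the equivariance of the coefficients $x_{p+s}=-(x,\zeta_s)$ from (\ref{sign-shift}), so that precisely the contraction $\sum_s(-1)^{s-1}x_{p+s}\,\AO_\l(q;s)$ appearing in (\ref{psi0-formula}) and (\ref{Psi-formula-corrected}) descends to $T_z(D)$ --- and this is all the paper ever uses, since the subsequent pullback and Stokes arguments involve only $\psi_{KM}$ and $\Psi_{KM}$, never a lone $\AO_\l(q;s)$. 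Your auxiliary form $\widetilde{\AO}_\l$, the full $q$-slot alternation with one $\o$-slot replaced by the scalar column $x_{p+\bullet}$, is exactly the paper's determinant with one column of scalars $-(x,\zeta_s)$, so the two treatments converge; yours is the more precise statement and supplies the half of the proof that the paper omits.
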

\begin{proof} We observe that for some non-zero constant $c$, 
$$
\AO_\l(q)(\eta^1,\dots,\eta^q) = c\,\det \begin{pmatrix} (x,\eta_1^1)&\dots&(x,\eta^1_{q-2\l})& \eta^1_{q-2\l+1}&\dots&\eta^1_q\\
\vdots&{}&\vdots&\vdots&{}&\vdots\\
(x,\eta_1^q)&\dots&(x,\eta^q_{q-2\l})& \eta^q_{q-2\l+1}&\dots&\eta^q_q
\end{pmatrix}
$$
where, in expanding the determinant, the product of vectors is taken using $(\ ,\ )$. 
\end{proof} 

Thus (\ref{ph-formula-tau}) (resp.  (\ref{Psi-formula-corrected}))  defines a global $q$ form $\ph_{KM}(\tau,x)$ on $D$ (resp. a global $q-1$-form 
$\Psi_{KM}(\tau,x)$  on $D - D_x$) and these forms satisfy
$$d\Psi_{KM}(\tau,x) = \ph_{KM}(\tau,x)$$ 
on $D-D_x$.

\begin{rem}
The formula for the pullback for these forms to $\OFD$ involves additional terms determined by the requirement that the forms vanish if one of the input tangent 
vectors is vertical, i.e., in the kernel of $d\pi_\zeta$.  We will not need these expressions. 
\end{rem}

%

\section{The pullback to certain sub-symmetric spaces}

Suppose that $y\in V$ is a negative vector, and let
$$\vhy = y^\perp,$$
$$\dhy = \{\,z\in D\mid y\in z\},$$
and
$$D(\vhy) =  \{ z= \text{oriented neg. $(q-1)$-plane in $\vhy$}\}.$$
For the properly oriented orthogonal frame bundle $\OFD(\vhy)\rightarrow D(\vhy)$,
there is an embedding 
\beq\label{defkappaj}
\kappa_y:  \OFD(\vhy)\hookrightarrow \OFD, \qquad \zeta \mapsto [\und{y},\zeta],
\eeq
where $\und{y} = y |(y,y)|^{-\frac12}$, and a resulting embedding
\beq\label{defkappaj-2}
\kappa_y:D(\vhy)\isoarrow \dhy \subset D.
\eeq

A fundamental result is the following pullback formula, which we find rather striking

\begin{prop}
 For $x\in V$, write $x = -(x,\und{y})\,\und{y} + x_{\perp y}$, so that $x_{\perp y}$ is the $\vhy$-component of $x$. Then \hfb
(i)
\begin{align*}
\kappa_y^*\psi_{KM}^0(x) &= 2^{-\frac12}\,(x,\und{y})\, e^{-2\pi (x,\und{y})^2}\,\ph_{KM}^{\vhy,0}(x_{\perp y}).\\
\noalign{\noindent
(ii)}
\kappa_y^*(\psi_{KM}(\tau,x)) &= 2^{-\frac12}\, v^{\frac32}\,(x,\und{y})\,e^{-2\pi v(x,\und{y})^2} \qq^{-\frac12(x,\und{y})^2}\, \ph_{KM}^{\vhy}(\tau,x_{\perp y}).
\end{align*}
Here $ \ph_{KM}^{\vhy,0}(\tau,\cdot)$ is the $\ph_{KM}^0$ Schwartz $(q-1)$-form on $D(\vhy)$.
\end{prop}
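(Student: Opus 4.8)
The plan is to carry out the whole computation upstairs on the orthonormal frame bundles, where the forms $\psi_{KM}$ and $\ph_{KM}^{\vhy}$ are given by the explicit expressions of Section~\ref{local-formulas} via the prescription (\ref{define-forms}). The embedding $\kappa_y: \OFD(\vhy) \hookrightarrow \OFD$, $[\zeta_1,\dots,\zeta_{q-1}] \mapsto [\und y,\zeta_1,\dots,\zeta_{q-1}]$, puts $\und y$ into the first frame slot. The decisive point is that $\kappa_y$ holds $\und y$ fixed, so its differential carries a horizontal tangent vector $[\eta_1,\dots,\eta_{q-1}]$ to $[0,\eta_1,\dots,\eta_{q-1}]$: the velocity in the first slot vanishes identically. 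Reading off (\ref{define-forms}), I therefore expect $\kappa_y^*\o(1)=0$ and $\kappa_y^*\O(1,t)=\kappa_y^*\O(t,1)=0$ for all $t$, while for the higher slots $\kappa_y^*\o(s)=\o^{\vhy}(s-1)$ and $\kappa_y^*\O(s,t)=\O^{\vhy}(s-1,t-1)$ for $2\le s,t\le q$, now with $x$ replaced by $x_{\perp y}$, using that $(x,\eta)=(x_{\perp y},\eta)$ whenever $\eta\perp\und y$.

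I would then feed this into the formula (\ref{psi0-formula}) for $\psi_{KM}^0(x)$. Each alternation $\AO_\l(q;s)$ is a sum of monomials, and every monomial uses each index of $\{1,\dots,\hat s,\dots,q\}$ exactly once. Hence for $s\neq 1$ every monomial carries a factor $\o(1)$, $\O(1,\cdot)$ or $\O(\cdot,1)$ referencing the first slot, so $\kappa_y^*\AO_\l(q;s)=0$; only the term $s=1$ survives. For $s=1$ the alternation runs over the indices $\{2,\dots,q\}$, and the slot reindexing gives $\kappa_y^*\AO_\l(q;1)(x)=\AO_\l^{\vhy}(q-1)(x_{\perp y})$. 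The scalar attached to this term is $(-1)^1 x_{p+1}$, and by (\ref{sign-shift}) with $\zeta_1=\und y$ we have $x_{p+1}=-(x,\und y)$, so the coefficient is exactly $+(x,\und y)$; this is where the stated sign is produced. Finally I would factor the Gaussian: with $R(x,z)=(x,\zeta)(\zeta,x)=\sum_s (x,\zeta_s)^2$ for an orthonormal frame and $\zeta_1=\und y$, together with $(x,\zeta_s)=(x_{\perp y},\zeta_s)$ for $s\ge 2$, one gets $R(x,z)=(x,\und y)^2+R^{\vhy}(x_{\perp y})$, hence $e^{-2\pi R(x,z)}=e^{-2\pi(x,\und y)^2}e^{-2\pi R^{\vhy}(x_{\perp y})}$.

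Collecting the surviving term and using $2^{q/2-1}=2^{-1/2}\cdot 2^{(q-1)/2}$, together with the fact that $\psi_{KM}^0$ and $\ph_{KM}^{\vhy,0}$ share the same coefficients $C(q-1,\l)$ and the same summation range $0\le\l\le[(q-1)/2]$, the sum reassembles, via (\ref{ph-formula}) for $\vhy$, into $2^{-1/2}(x,\und y)\,e^{-2\pi(x,\und y)^2}\,\ph_{KM}^{\vhy,0}(x_{\perp y})$. This gives (i). Part (ii) then follows by reinstating $\tau$: applying (i) with $x$ replaced by $v^{1/2}x$, using $(v^{1/2}x)_{\perp y}=v^{1/2}x_{\perp y}$ and the scaling relations (\ref{psi0-to-psi}) and the $\vhy$-analogue of (\ref{ph-formula-tau}), I would bookkeep the powers of $v$ and collect the remaining exponential into $\qq^{Q(x)-Q^{\vhy}(x_{\perp y})}=\qq^{-\frac12(x,\und y)^2}$, the last identity coming from $(x,x)=(x_{\perp y},x_{\perp y})-(x,\und y)^2$.

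The main obstacle, and the step deserving the most care, is the first paragraph: rigorously justifying that the globally defined forms of Section~\ref{local-formulas} pull back through $\kappa_y$ at the frame-bundle level as claimed, i.e., that the vanishing velocity in the $\und y$-slot annihilates precisely the forms indexed by $1$ and leaves the $\vhy$-forms intact, with all signs and orientation conventions (the order $[\und y,\zeta]$ and the induced orientation on $D(\vhy)$) kept consistent. Once this identification of the pullbacks of $\o(s)$ and $\O(s,t)$ is pinned down, the remainder is the bookkeeping indicated above.
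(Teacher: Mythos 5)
Your proposal is correct and follows essentially the same route as the paper: both arguments rest on the observation that $d\kappa_y$ kills the first frame slot, so only the $s=1$ term of (\ref{psi0-formula}) survives the pullback, after which the sign via (\ref{sign-shift}), the factorization $R(x,z)=(x,\und{y})^2+R^{\vhy}(x_{\perp y})$, and the constant $2^{q/2-1}=2^{-1/2}\cdot 2^{(q-1)/2}$ reassemble the $\vhy$-form; part (ii) then follows from (\ref{psi0-to-psi}) and $Q(x)=-(x,\und{y})^2+Q(x_{\perp y})$ exactly as you indicate. The only difference is that you spell out the slot-by-slot pullbacks of $\o(s)$ and $\O(s,t)$ more explicitly than the paper, which simply asserts that terms involving index $1$ vanish.
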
 
\begin{proof} 
The map on tangent spaces is given by 
$$d\kappa_y:T_{\zeta}(\OFD(\vhy)) \lra T_{\kappa_y(\zeta)}(\OFD), \qquad \eta=[\eta_1,\dots, \eta_{q-1}] \mapsto [0,\eta_1,\dots, \eta_{q-1}],$$
and this map is compatible with the `horizontal' subspaces. 
It follows that any term in $\psi_{KM}^0(x)$ involving an index $s=1$ in the differential form will vanish under pullback.  Thus, by (\ref{psi0-formula}),  we have
\begin{align*}
\kappa_y^*\psi_{KM}^0(x) &= 2^{\frac{q}2-1}\,(x,\und{y})\, e^{-2\pi (x,\und{y})^2}\sum_{\l=0}^{[(q-1)/2]}  C(q-1,\l)\,\AO_\l(q-1)(x_{\perp y})\,e^{-2\pi R(x_{\perp y},\zeta)}\\
\nass
{}&=  2^{-\frac12}\,(x,\und{y})\, e^{-2\pi (x,\und{y})^2}\,\ph_{KM}^{\vhy,0}(x_{\perp y}).
\end{align*}
Passing to $\psi_{KM}(\tau,x)$ via  (\ref{psi0-to-psi}) and noting that 
$$Q(x) = -(x,\und{y})^2 + Q(x_{\perp y}),$$ we obtain the claimed formula. 
\end{proof}

%
%
%

Next consider the $(q-1)$-form $\Psi_{KM}^0(x)$.  Using the expressions just found and Lemma~\ref{lem2.3}, we have the following.

\begin{cor}\label{lem1.5}
On the subset of $D(\vhy)$ for which $\kappa_y(z) \notin D_x$, 
$$\kappa_y^*\Psi_{KM}^0(x) = -2^{\frac12}\,(x,\und{y})\int_1^\infty e^{-2\pi t^2 (x,\und{y})^2}\,\ph_{KM}^{\vhy,0}(t x_{\perp y})\, dt.$$
\end{cor}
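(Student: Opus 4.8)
The plan is to obtain the formula by simply pulling the defining integral (\ref{def-Psi-0}) back under the integral sign and then inserting the pullback formula for $\psi_{KM}^0$ supplied by part (i) of the preceding Proposition. Since $\kappa_y$ is a fixed embedding, independent of the integration variable $t$, the pullback commutes with the $t$-integral, so directly from (\ref{def-Psi-0}) I would write
$$\kappa_y^*\Psi_{KM}^0(x) = -\int_1^\infty \kappa_y^*\big[\psi_{KM}^0(t^{\frac12}x)\big]\,\frac{dt}{t}.$$

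First I would apply the Proposition with $t^{\frac12}x$ in place of $x$. Using $(t^{\frac12}x,\und{y}) = t^{\frac12}(x,\und{y})$ and the linearity of the projection $v\mapsto v_{\perp y}$, which gives $(t^{\frac12}x)_{\perp y} = t^{\frac12}x_{\perp y}$, part (i) yields
$$\kappa_y^*\big[\psi_{KM}^0(t^{\frac12}x)\big] = 2^{-\frac12}\,t^{\frac12}(x,\und{y})\, e^{-2\pi t(x,\und{y})^2}\,\ph_{KM}^{\vhy,0}(t^{\frac12}x_{\perp y}).$$
Substituting and collecting the factor $t^{\frac12}\cdot t^{-1} = t^{-\frac12}$ leaves the one-variable integral
$$\kappa_y^*\Psi_{KM}^0(x) = -2^{-\frac12}(x,\und{y})\int_1^\infty t^{-\frac12}\, e^{-2\pi t(x,\und{y})^2}\,\ph_{KM}^{\vhy,0}(t^{\frac12}x_{\perp y})\,dt.$$
The last step is the substitution $t = u^2$, so that $t^{-\frac12}\,dt = 2\,du$ and $t^{\frac12}=u$; the limits are unchanged, the constants combine to $2^{-\frac12}\cdot 2 = 2^{\frac12}$, and renaming $u$ back to $t$ produces exactly the claimed expression.

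The one point that genuinely needs care — and which also explains the hypothesis of the statement — is the convergence of these integrals, equivalently the legitimacy of interchanging $\kappa_y^*$ with $\int_1^\infty$. Here I would record that, writing $z$ for the image point in $D(\vhy)$ and using $(\und{y},\und{y}) = -1$ together with the orthogonal decomposition $x = -(x,\und{y})\,\und{y} + x_{\perp y}$, one has
$$R(x,\kappa_y(z)) = (x,\und{y})^2 + R(x_{\perp y},z).$$
Since $\ph_{KM}^{\vhy,0}(t^{\frac12}x_{\perp y})$ carries the Gaussian factor $e^{-2\pi t R(x_{\perp y},z)}$, the integrand decays like $e^{-2\pi t R(x,\kappa_y(z))}$ times a polynomial in $t^{\frac12}$. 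Thus the hypothesis $\kappa_y(z)\notin D_x$, i.e.\ $R(x,\kappa_y(z)) > 0$, is precisely what guarantees absolute convergence at $t=\infty$ and so justifies carrying out the pullback coefficientwise inside the integral. I expect this convergence bookkeeping to be the main (and essentially only) obstacle; the algebra is otherwise routine, being just homogeneity, linearity of the projection, and the change of variables $t=u^2$.
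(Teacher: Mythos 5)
Your argument is correct and is exactly the intended derivation: the paper treats this as an immediate corollary obtained by pulling back the defining integral (\ref{def-Psi-0}) under the integral sign, inserting part (i) of the Proposition with $t^{1/2}x$ in place of $x$, and substituting $t=u^2$. Your convergence check via $R(x,\kappa_y(z)) = (x,\und{y})^2 + R(x_{\perp y},z)$ is a welcome extra detail that the paper leaves implicit.
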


In the next section, it will be useful to have the following variant, which involves a shift in the orientations. For an index $j$, $1\le j\le q$, define
\beq\label{defkappaj-jay}
\kappa_y[j]:  \OFD(\vhy)\hookrightarrow \OFD, \qquad \zeta \mapsto [\zeta_1, \dots, \zeta_{j-1},\und{y},\zeta_j, \dots, \zeta_{q-1}],
\eeq
and write $\kappa_y[j]:D(\vhy)\lra D$ for the corresponding embedding of symmetric spaces.  Of course, $\kappa_y = \kappa_y[1]$ and, 
the embeddings of symmetric spaces only depend on the parity of $j$. 

\begin{cor}\label{lem1.6} (i) 
On the subset of $D'_y$ for which $\kappa_j(z) \notin D_x$, 
$$\kappa_y[j]^*\Psi_{KM}^0(x) = (-1)^{j}\,2^{\frac12}\,(x,\und{y})\int_1^\infty e^{-2\pi t^2 (x,\und{y})^2}\,\ph_{KM}^{\vhy,0}(t x_{\perp y})\, dt.$$
(ii) On $D'_y$, 
$$\kappa_y^*(\psi_{KM}(\tau,x)) = (-1)^{j-1} 2^{-\frac12}\, v^{\frac32}\,(x,\und{y})\,e^{-2\pi v(x,\und{y})^2} \qq^{-\frac12(x,\und{y})^2}\, 
\ph_{KM}^{\vhy}(\tau,x_{\perp y}).$$
\end{cor}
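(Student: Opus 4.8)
The plan is to recognize Corollary~\ref{lem1.6} as the reindexed analogue of the preceding Proposition and of Corollary~\ref{lem1.5}: the map $\kappa_y[j]$ inserts $\und{y}$ into position $j$ of the frame rather than into position $1$, and the only net effect of this shift is an overall sign $(-1)^{j-1}$. First I would record the tangent map, which by the same computation as in the Proposition is
$$d\kappa_y[j]:\ \eta=[\eta_1,\dots,\eta_{q-1}]\ \longmapsto\ [\eta_1,\dots,\eta_{j-1},0,\eta_j,\dots,\eta_{q-1}],$$
so that $0$ now occupies the $j$-th slot and the horizontal subspaces still correspond.

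For part (ii), I would substitute this into the formula (\ref{psi0-formula}) for $\psi_{KM}^0(x)$. Since $d\kappa_y[j]$ places $0$ in slot $j$, every summand whose differential-form factor $\AO_\l(q;s)$ involves the index $j$ pulls back to zero; as $\AO_\l(q;s)$ omits precisely the index $s$, only the single term $s=j$ survives. For that term the order-preserving relabeling of the index set $\{1,\dots,\hat{j},\dots,q\}$ onto $\{1,\dots,q-1\}$ identifies $\kappa_y[j]^*\AO_\l(q;j)(x)$ with $\AO_\l(q-1)(x_{\perp y})$, exactly as in the case $j=1$ and with no extra sign, because the alternation (\ref{alternation}) is defined symmetrically over its index set. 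The sole change is the scalar prefactor: the surviving term carries $(-1)^j x_{p+j}$, and since $\und{y}$ now occupies slot $j$ we have $x_{p+j}=-(x,\und{y})$ by (\ref{sign-shift}), whence $(-1)^j x_{p+j}=(-1)^{j-1}(x,\und{y})$. Collecting terms reproduces the Proposition's formula multiplied by $(-1)^{j-1}$, and passing from $\psi_{KM}^0$ to $\psi_{KM}(\tau,\cdot)$ via (\ref{psi0-to-psi}), together with $Q(x)=-(x,\und{y})^2+Q(x_{\perp y})$, yields (ii) on all of $D'_y$.

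For part (i), I would feed the pullback of $\psi_{KM}^0$ just obtained into the definition (\ref{def-Psi-0}) of $\Psi_{KM}^0$. Replacing $x$ by $t^{1/2}x$ rescales $(x,\und{y})\mapsto t^{1/2}(x,\und{y})$ and $x_{\perp y}\mapsto t^{1/2}x_{\perp y}$, and the substitution $u=t^{1/2}$ absorbs the weight $t^{-1}\,dt$ exactly as in the derivation of Corollary~\ref{lem1.5}, producing $(-1)^{j-1}$ times that corollary's integral. Since Corollary~\ref{lem1.5} carries the sign $-1=(-1)^1$, the total sign is $(-1)^{j-1}\cdot(-1)=(-1)^j$, which is (i); the hypothesis $\kappa_y[j](z)\notin D_x$ guarantees $R(x_{\perp y},z)>0$ along the relevant slice, so the $t$-integral converges.

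The computations are routine once the bookkeeping is fixed, so the one genuinely delicate point—and the step I would check most carefully—is the claim that $\kappa_y[j]^*\AO_\l(q;j)=\AO_\l(q-1)$ with \emph{no} sign, i.e. that reindexing by the order-preserving bijection on $\{1,\dots,\hat{j},\dots,q\}$ introduces no orientation factor and that the entire sign $(-1)^{j-1}$ is supplied by the scalar $(-1)^j x_{p+j}$. Conceptually this reflects that $\kappa_y[j]$ differs from $\kappa_y[1]$ by right multiplication by a $j$-cycle in $\OO(q)$, of sign $(-1)^{j-1}$, under which these frame forms transform by the determinant character; I would nonetheless verify the reindexing directly from (\ref{alternation}) to be sure no spurious sign is hidden in the alternation.
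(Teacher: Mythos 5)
Your proposal is correct and follows exactly the route the paper intends: the corollary is stated as a "variant" of the preceding Proposition and Corollary~\ref{lem1.5}, and its justification is precisely your observation that with $\und{y}$ in slot $j$ only the $s=j$ term of (\ref{psi0-formula}) survives the pullback, contributing $(-1)^j x_{p+j}=(-1)^{j-1}(x,\und{y})$ while the order-preserving reindexing of $\AO_\l(q;j)$ onto $\AO_\l(q-1)$ carries no further sign. Your careful flagging of that last point, and the substitution $t=u^2$ recovering part (i) from part (ii), match the computations in the Proposition and Corollary~\ref{lem1.5}, so nothing is missing.
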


\section{Proof of Theorem~\ref{main.theo.1}}\label{proof-Thm-4.1}

For convenience, we remove a factor independent of $z$ and write
$$\ph_{KM}(x) = \ph_{KM}^0(x) \,e^{-\pi (x,x)}.$$
In this section, we compute the cubical integrals
$$I^0(x;\CC) = \int_{S(\CC)} \ph_{KM}^0(x).$$

\subsection{The regular case}

First suppose that $x$ is regular with respect to $\CC$, so that, by Lemma~\ref{old-lemma},  the intersection $D_x\cap S(\CC)$ is either empty or 
consists of a single interior point $\rho_\CC(s(x))$ depending on whether $\P_q(x,\CC)$ vanishes or not. 
%
If $\P_q(x,\CC)\ne0$ and for $\e>0$ sufficiently small, define a collection
$$\CC^\e(x) = \{\{B_1(s(x)_{1}-\e), B_1(s(x)_{1}+\e)\}, \dots,\{B_q(s(x)_{q}-\e), B_q(s(x)_{q}+\e)\}\}.$$
For simplicity, we will abbreviate this as
$$\CC^\e=\CC^\e(x)= \{\{C^\e_1,C^\e_{1'}\},\dots,\{C^\e_1,C^\e_{q'}\}\}.$$
The following result illustrates the convenience of the `good position' formulation. 
\begin{lem}
The collection $\CC^\e(x)$ is in good position. 
\end{lem}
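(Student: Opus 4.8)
The goal is to show that the collection
$$\CC^\e(x) = \{\{C^\e_1,C^\e_{1'}\},\dots,\{C^\e_q,C^\e_{q'}\}\},\qquad C^\e_j = B_j(s(x)_j-\e),\ C^\e_{j'}=B_j(s(x)_j+\e),$$
is in good position, meaning that for every $t=[t_1,\dots,t_q]\in[0,1]^q$ the tuple $[B^\e_1(t_1),\dots,B^\e_q(t_q)]$ spans a negative $q$-plane lying in $D$. Here $B^\e_j(t_j) = (1-t_j)C^\e_j + t_j C^\e_{j'}$. The key observation is that each $B^\e_j(t_j)$ is itself a convex combination of $C_j$ and $C_{j'}$: since $C^\e_j=B_j(s(x)_j-\e)$ and $C^\e_{j'}=B_j(s(x)_j+\e)$ are both points on the segment $B_j([0,1])$, any convex combination of them is again of the form $B_j(\sigma_j)$ for some $\sigma_j\in[0,1]$.

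First I would make this reparametrization explicit. A direct computation gives
$$B^\e_j(t_j) = (1-t_j)B_j(s(x)_j-\e) + t_j B_j(s(x)_j+\e) = B_j\big(s(x)_j + (2t_j-1)\e\big),$$
using the affine-linearity of $s_j\mapsto B_j(s_j)=(1-s_j)C_j+s_jC_{j'}$. Define $\sigma_j(t_j)=s(x)_j+(2t_j-1)\e$. For $\e$ small enough that $s(x)_j\pm\e\in(0,1)$ for all $j$ — which is possible because $s(x)\in(0,1)^q$ by Lemma~\ref{old-lemma}(i) in the regular case — the map $t_j\mapsto\sigma_j(t_j)$ sends $[0,1]$ into $[s(x)_j-\e,\,s(x)_j+\e]\subset[0,1]$.

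It then follows immediately that for any $t\in[0,1]^q$,
$$[B^\e_1(t_1),\dots,B^\e_q(t_q)] = [B_1(\sigma_1(t_1)),\dots,B_q(\sigma_q(t_q))] = B(\sigma(t)),$$
where $\sigma(t)=[\sigma_1(t_1),\dots,\sigma_q(t_q)]\in[0,1]^q$. Since $\CC$ is assumed to be in good position, the tuple $B(\sigma(t))$ spans a negative $q$-plane in $D$ for every point of $[0,1]^q$, and in particular for every value $\sigma(t)$. Hence the spanning condition holds for $\CC^\e(x)$ as well, which is exactly the definition of good position.

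I do not anticipate a genuine obstacle here; the content is essentially that a sub-box of $[0,1]^q$ centered at $s(x)$ reparametrizes to the cube for $\CC^\e(x)$, and good position is inherited from any superset of parameters. The only point requiring care is the choice of $\e$: one must verify that $\e$ is taken small enough that $s(x)_j\pm\e\in[0,1]$ for all $j$, which is precisely the hypothesis ``for $\e>0$ sufficiently small'' already built into the definition of $\CC^\e(x)$, and is guaranteed by $s(x)\in(0,1)^q$.
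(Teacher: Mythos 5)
Your argument is correct and is essentially identical to the paper's own proof: both compute that $(1-t_j)C^\e_j+t_jC^\e_{j'}=B_j(s(x)_j-\e+2t_j\e)$ and conclude that $\rho_{\CC^\e(x)}(t)=\rho_\CC(s(x)-\e+2\e t)\in D$ by the good position of $\CC$. The remark about choosing $\e$ small enough that $s(x)_j\pm\e\in[0,1]$ is a sensible point of care, though not needed for the spanning conclusion itself since $\sigma(t)\in[0,1]^q$ is all that is required.
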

\begin{proof} We note that, for $t\in [0,1]$, 
$$(1-t) C^\e_j + t C^e_{j'} = (1-s(x)_j+\e-2t\e)C_j + (s(x)_j-\e+2t\e)C_{j'}$$
so that, for $t\in [0,1]^q$, 
$$\rho_{\CC^\e(x)}(t) = \rho_{\CC}(s(x)-\e+2\e t)\in D,$$
i.e., $\CC^\e(x)$ is in good position.
\end{proof}

By construction, 
the singular $q$-cube $S(\CC^\e(x))$ contains the point $D_x\cap S(\CC)$. 
For $x$ regular with respect to $\CC$ and $\P_q(x,\CC)=0$, we let $S(\CC^\e(x))$ be the empty set. In general, we let
$$S^\e(x;\CC) = S(\CC) - \text{int}\ S(\CC^\e).$$
Then Stokes' Theorem and the inductive relation of Corollary~\ref{lem1.5},
imply the following inductive formula.  

\begin{prop}\label{inductive.prop} Suppose that $x$ is regular with respect to $\CC$.  Then the set $D_x$ does not meet $\d S(\CC)$,  the integral
$$I^{00}(x;\CC) := \int_{\d S(\CC)} \Psi_{KM}^0(x)$$
is well defined, and
$$I^0(x;\CC) = I^{00}(x;\CC) - \lim_{\e\downarrow 0}\,I^{00}(x;\CC^\e(x)).$$
Moreover,
\begin{align}\label{basic.integral}
I^{00}(x;\CC) &= 2^{\frac12}\,\sum_{j=1}^q (x,\uC_j)\bigg(\ \int_1^\infty e^{-2\pi t^2 (x,\uC_j)^2}\,I^0(t x_{\perp j};\CC[j])\, dt\bigg)\\
\nass
{}& \qquad\qquad - (x,\uC_{j'})\bigg(\ \int_1^\infty e^{-2\pi t^2 (x,\uC_{j'})^2}\,I^0(t x_{\perp j'};\CC[j'])\, dt\ \bigg).\notag
\end{align}
where $\CC[j]$ and $\CC[j']$ are given by (\ref{C[j]}) and (\ref{C[j']}).
\end{prop}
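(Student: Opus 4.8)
The plan is to prove Proposition~\ref{inductive.prop} by applying Stokes' theorem to the primitive $\Psi_{KM}^0(x)$ on a suitably excised version of $S(\CC)$, then evaluating the boundary integral via the pullback formula of Corollary~\ref{lem1.6}. First I would establish the geometric input: since $x$ is regular with respect to $\CC$, Lemma~\ref{old-lemma}(i) tells us that $D_x\cap S(\CC)$ is either empty or a single \emph{interior} point $\rho_\CC(s(x))$ with $s(x)\in(0,1)^q$. In either case $D_x$ does not meet the boundary $\d S(\CC)$, because each face of $S(\CC)$ is parametrized by fixing some $s_j\in\{0,1\}$ and hence lies away from the interior intersection point. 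Consequently $\Psi_{KM}^0(x)$, which by Lemma~\ref{lem2.3}(ii) is a primitive for $\ph_{KM}^0(x)$ on $D-D_x$, is smooth on a neighborhood of $\d S(\CC)$, so that $I^{00}(x;\CC) = \int_{\d S(\CC)} \Psi_{KM}^0(x)$ is well defined.

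Next I would handle the singularity. On the excised chain $S^\e(x;\CC) = S(\CC) - \text{int}\,S(\CC^\e(x))$, the form $\Psi_{KM}^0(x)$ is smooth (the only singularity was at the interior point $\rho_\CC(s(x))$, which lies inside the removed cube $S(\CC^\e)$), so Stokes' theorem applies:
\begin{equation*}
\int_{S^\e(x;\CC)} d\Psi_{KM}^0(x) = \int_{\d S^\e(x;\CC)} \Psi_{KM}^0(x).
\end{equation*}
Using $d\Psi_{KM}^0(x) = \ph_{KM}^0(x)$ and the fact that $\d S^\e(x;\CC) = \d S(\CC) - \d S(\CC^\e(x))$ (with matching orientations), this gives $\int_{S^\e(x;\CC)} \ph_{KM}^0(x) = I^{00}(x;\CC) - I^{00}(x;\CC^\e(x))$. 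Taking $\e\downarrow 0$, the left side converges to $I^0(x;\CC)$ because $\ph_{KM}^0(x)$ is a smooth form with no singularity and $S(\CC^\e)$ shrinks to a point of measure zero; this yields the displayed identity $I^0(x;\CC) = I^{00}(x;\CC) - \lim_{\e\downarrow 0} I^{00}(x;\CC^\e(x))$. The case $\P_q(x,\CC)=0$ is subsumed since then $S(\CC^\e)$ is empty and the limit term vanishes.

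The final step is to compute $I^{00}(x;\CC)$ explicitly. I would expand the boundary using formula (\ref{bd-S}), $\d S(\CC) = \sum_{j=1}^q (-1)^{j}(\d_j^+S(\CC) - \d_j^-S(\CC))$, and then use the wall identities (\ref{wall-front}) and (\ref{wall-back}), namely $\a_j\rho_\CC = \kappa_j\circ\rho_{\CC[j]}$ and $\b_j\rho_\CC = \kappa_{j'}\circ\rho_{\CC[j']}$, to rewrite each face integral as an integral over $S(\CC[j])$ or $S(\CC[j'])$ in the sub-symmetric space. Pulling back $\Psi_{KM}^0(x)$ through these embeddings is exactly what Corollary~\ref{lem1.6}(i) computes: on each face it produces the factor $(-1)^{j}\,2^{1/2}\,(x,\uC_j)\int_1^\infty e^{-2\pi t^2(x,\uC_j)^2}\,\ph_{KM}^{V_j,0}(t x_{\perp j})\,dt$. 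Integrating over $S(\CC[j])$ and recognizing $\int_{S(\CC[j])}\ph_{KM}^{V_j,0}(t x_{\perp j}) = I^0(t x_{\perp j};\CC[j])$, the front and back faces contribute the $(x,\uC_j)$ and $-(x,\uC_{j'})$ terms respectively. Collecting the signs—the $(-1)^j$ from the boundary formula cancels against the $(-1)^j$ in Corollary~\ref{lem1.6}(i)—produces exactly (\ref{basic.integral}).

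The main obstacle I anticipate is the careful bookkeeping of orientations and signs: one must check that the orientation conventions for $\d_j^\pm S(\CC)$ as oriented $(q-1)$-cubes are consistent with the orientation induced by $\kappa_j$ on $D(V_j)$, and that the sign $(-1)^j$ in the pullback formula (which encodes the position of $\und{y}$ in the frame) correctly cancels the boundary sign so that no residual parity factor survives. The regularity hypothesis is what guarantees the intersection point is interior and that every $(x,\uC_j)\ne 0$, so that the integral transforms in Corollary~\ref{lem1.6} are well defined; I would make sure to invoke this at each face rather than assuming it globally.
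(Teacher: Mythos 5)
Your proposal is correct and follows essentially the same route as the paper: the same excision-and-Stokes argument using $d\Psi_{KM}^0(x)=\ph_{KM}^0(x)$ away from $D_x$, followed by the same combination of the boundary decomposition (\ref{bd-S}), the wall identities (\ref{wall-front})--(\ref{wall-back}), and the pullback formula of Corollary~\ref{lem1.6}, with the $(-1)^j$ signs cancelling exactly as you describe. The paper merely states the Stokes/excision step more tersely in the lead-in to the proposition and writes out only the boundary computation in the proof itself.
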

\begin{proof} 
Combining (\ref{bd-S}), (\ref{wall-front}), (\ref{wall-back}), and Corollary~\ref{lem1.6}, 
we obtain 
\begin{align*}
I^{00}(x;\CC) &= \sum_{j=1}^q (-1)^j\bigg(\int_{\d_j^+ S(\CC)} \Psi_{KM}^0(x) - \int_{\d_j^- S(\CC)} \Psi_{KM}^0(x)\ \bigg)\\
\nass
&= \sum_{j=1}^q (-1)^j\bigg(\int_{ S(\CC[j])} \kappa_j^*\Psi_{KM}^0(x) - \int_{S(\CC[j'])}\kappa_{j'}^* \Psi_{KM}^0(x)\ \bigg)\\
\nass
{}&=2^{\frac12}\sum_{j=1}^q (x,\uC_j)\bigg(\ \int_1^\infty e^{-2\pi t^2 (x,\uC_j)^2}\,I^0(t x_{\perp j};\CC[j])\, dt\bigg)\\
\nass
{}& \qquad\ \ \, - (x,\uC_{j'})\bigg(\ \int_1^\infty e^{-2\pi t^2 (x,\uC_{j'})^2}\,I^0(t x_{\perp j'};\CC[j'])\, dt\ \bigg),
\end{align*}
as claimed.
\end{proof}

\subsection{The case $q=1$}
As a basis for the inductive proof of Theorem~\ref{main.theo.1}, we first suppose that $q=1$, so that 
$\sig(V) = (m-1,1)$. This case is discussed in several places, \cite{kudla.zweg}, \cite{FK-I}, \cite{livinskyi}, etc., but we give the calculation for convenient reference. We have  
$$D\simeq \{ \zeta\in V\mid Q(\zeta) = -1\}, \qquad z = \sspan\{\zeta\}_{\text{p.o.}},$$
and the tangent space at $z\in D$ is
$$T_z(D) \simeq U(z) := z^\perp.$$
For any $x\in V$ the $1$-form $\o(1)$ on $D$ is defined by 
$$\o(1)_z(\eta) = (x,\eta),\qquad \eta\in U(z) \simeq T_z(D),$$
and the Schwartz form is given by
$$\ph_{KM}^0(x) = 2^{\frac12}\,\o(1)\,e^{-2\pi R(x,z)},$$
with $R(x,z) = (x,\zeta)^2$.
 Take $C$, $C'\in V$ such that 
$$Q(C)<0, \quad Q(C')<0, \quad (C,C')<0,$$
where the third condition insures that 
$$\{C\}_{\text{p.o.}} \simeq \uC = C \,|(C,C)|^{-\frac12},\qquad \{C'\}_{\text{p.o.}}\simeq \uC'$$
 lie on the same component of $D$. 
For $s\in [0,1]$, we define
$$B(s) = (1-s)C+ s C',$$
and note that 
$$(B(s),B(s)) = (1-s)^2(C,C) + 2 s (1-s) (C, C') + s^2 (C',C')<0,$$
so that the collection $\CC = \{\{C,C'\}\}$ is in good position. 
Writing
$$\zeta=\zeta(s) = B(s)|(B(s),B(s))|^{-\frac12},$$
we obtain a geodesic curve
$$\phi_{\CC}: [0,1] \lra D, \qquad s \mapsto \{B(s)\}_{\text{p.o.}}\simeq \zeta(s)$$
joining $\uC$ and $\uC'$. 
The  tangent vector to this curve will be $\dot \zeta= \frac{d}{ds}\zeta$, and 
\begin{align*}
I^0(x;\CC) &= 2^{\frac12}\,\int_0^1 (x,\dot\zeta(s))\,e^{-2\pi (x,\zeta(s))^2}\,ds\\
\nass
{}&= 2^{\frac12}\,\int_0^1 \frac{\d }{\d s}\left( -\int_{(x,\zeta(s))}^\infty e^{-2\pi t^2}\, dt\right)\,ds\\
\nass
{}&=2^{\frac12}\, \left(\int_{(x,\uC)}^\infty e^{-2\pi t^2}\, dt  - \int_{(x,\uC')}^\infty e^{-2\pi t^2}\, dt  \right).
\end{align*}
Since 
$$\int_u^\infty e^{-2\pi t^2}\,dt = 2^{-\frac32}( 1 - E(u\sqrt{2})),$$
for 
$$E(u) = 2\int_0^u e^{-\pi t^2}\,dt = 2\,\sgn(u)\int_0^{|u|} e^{-\pi t^2}\,dt,$$
as in \cite{zagier.bourb}, we obtain the expression 
\begin{align*}
I^0(x;\CC) &=\frac12\,\big(\,E((x,\uC')\sqrt{2}) - E((x,\uC)\sqrt{2})\,\big)\\
\nass
{}&=\frac12\,\big(\,E_1(C',x\sqrt{2}) - E_1(C,x\sqrt{2})\,\big),
\end{align*}
which is the $q=1$ case of Theorem~\ref{main.theo.1}.
Here we use the fact that, for $C\in V$ with $Q(C)<0$, a simple calculation shows that 
$E_1(C;x) = E((x,\uC))$.
Note that in this calculation we have not used the Stokes' theorem argument.  However, it is instructive to note that  
$$\psi_{KM}^0(x) = 2^{-\frac12}\,(x,\zeta)\,e^{-2\pi (x,\zeta)^2},$$
so that, for $z = \sspan\{C\}_{\text{p.o.}}\in D-D_x$, the primitive is given by
\begin{align*}
\Psi^0_{KM}(x) &=  -2^{-\frac12}\,(x,\uC)\int_1^\infty e^{-2\pi t (x,\uC)^2} \,t^{-\frac12}\,dt\\
\nass
{}&= -2^{\frac12}\,(x,\uC)\int_1^\infty e^{-2\pi t^2 (x,\uC)^2} \,dt\\
\nass
{}&= -\sgn(x,\uC)\int_{\sqrt{2}|(x,\uC)|}^\infty e^{-\pi t^2 } \,dt\\
\nass
{}&=\frac12\, \sgn(x,\uC)\bigg(2\int_0^{\sqrt{2}|(x,\uC)|} e^{-\pi t^2} \,dt -1\bigg)\\
\nass
{}&=\frac12\big(\ E_1(C;x \sqrt{2}) - \sgn(x,C) \,\big).
%
\end{align*}
Thus the Stokes' theorem calculation gives
$$I^{00}(x;\CC)=\int_{\d S(\CC)} \Psi^0_{KM}(x) =  \frac12\big(\  E_1(C_{1'};x \sqrt{2}) -  \sgn(x,C_{1'}) - E_1(C_{1};x \sqrt{2}) + \sgn(x,C_{1})\,\big),$$
so that the basis for Zwegers `completion' construction emerges.

\subsection{Induction} 
Next we consider the inductive step.  Note that we are assuming that $x$ is regular with respect to $\CC$ so that (\ref{basic.integral}) holds, and we suppose that 
the identity (\ref{inductive.conjecture}) holds for all $q'<q$ and all $\CC'$ in good position. 
Let $I[j]$ and $I[j']$ be subsets of $\{1, \dots,\widehat{j},\dots,q\}$ and let $\bC[j]^{I[j]}$ (resp. $\bC[j']^{I[j']}$) be obtained by the recipe defining $\bC^I$ in 
Theorem~\ref{main.theo.1}, starting with the set $\CC[j]$ defined in (\ref{C[j]})  (resp. the set $\CC[j']$ defined in (\ref{C[j']}) \,). 
 Then (\ref{basic.integral}) becomes
\begin{align}\label{basic.integral.inductive}
I^{00}(x/\sqrt{2};\CC) &=\sum_{j=1}^q (x,\uC_j)\bigg(\ \int_1^\infty e^{-\pi t^2 (x,\uC_j)^2}\,I^0(t x_{\perp j}/\sqrt{2};\CC[j])\, dt\bigg)\notag\\
\nass
{}& \qquad\qquad - (x,\uC_{j'})\bigg(\ \int_1^\infty e^{-\pi t^2 (x,\uC_{j'})^2}\,I^0(t x_{\perp j'}/\sqrt{2};\CC[j'])\, dt\ \bigg)\notag \\
\nass
&=(-1)^{q-1}2^{1-q}\sum_{j=1}^q\bigg(\  \sum_{I[j]} (-1)^{|I[j]|}\,\bigg(\ (x,\uC_j) \int_1^\infty e^{-\pi t^2 (x,\uC_j)^2}\,E_{q-1}(\bC[j]^{I[j]}; t x_{\perp j})\, dt\ \bigg)\\
\nass
&{}\qquad -\sum_{I[j']} (-1)^{|I[j']|}\,\ \bigg((x,\uC_{j'}) \int_1^\infty e^{-\pi t^2 (x,\uC_{j'})^2}\,E_{q-1}(\bC[j']^{I[j']};  t x_{\perp j'})\, dt\ \bigg)\ \bigg).\notag
\end{align}

We want to compare this to the expression
$$-2^{-q}\sum_I (-1)^{|I|}\, E_q(\bC^I;x).$$
The key is to relate the individual quantities
$E_q(\bC^I;x)$ in this sum
and the terms  on the right side of (\ref{basic.integral.inductive}) where $I =  I[j]$ or $I = \{j\}\cup I[j']$.  
Note that, if $I=I[j]$ then the collection $\bC[j]^{I[j]}$ spans a negative $q-1$-plane in $V_j$ which maps to $z^I$ under $\kappa_j$.  
Similarly, if $I= \{j\}\cup I[j']$, then the collection $\bC[j']^{I[j']}$ spans a negative $q-1$-plane in $V_{j'}$ which maps to $z^I$ under $\kappa_{j'}$.
Thus, we are collecting all of the terms which `correspond to' a given vertex of the $q$-cube $S(\CC)$.  
The required identities are all consequences of that for $I=\emptyset$, and thus the main identity needed is the following. 
\begin{prop}\label{key.nazar} Suppose that $x$ is regular with respect to $\bC$. Then
\beq\label{nazar-main}
E_q(\bC;x)-\sgn(\bC;x) = -2\sum_{j=1}^q  (x,\uC_j) \int_1^\infty e^{-\pi t^2 (x,\uC_j)^2}\,E_{q-1}(\bC[j]; t x_{\perp j})\, dt   ,
\eeq
where $\bC = \{C_1,\dots,C_q\}$,  $\bC[j] = \{C_{1\perp j},\dots,\widehat{C_j},\dots, C_{q\perp j}\}$, and $\sgn(\bC;x)$ is defined in (\ref{def.sgnCx}).
\end{prop}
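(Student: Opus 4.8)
The plan is to prove the identity (\ref{nazar-main}) by reducing it to a statement about the generalized error function $E_q$ and its defining integral (\ref{def.gen.error}), which I will verify by differentiating with respect to the point $x$. The left side $E_q(\bC;x) - \sgn(\bC;x)$ vanishes as $x$ moves to infinity in any direction transverse to the walls $(x,C_j)=0$, since the Gaussian in (\ref{def.gen.error}) concentrates and $E_q(\bC;x) \to \sgn(\bC;\pr_z(x))$; moreover, the regularity hypothesis guarantees that $\sgn(\bC;x)$ is locally constant near the given $x$. So the essential content is a \emph{gradient} identity: both sides should have the same derivative in $x$, and they should agree in a suitable limit. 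This is exactly the strategy underlying Nazaroglu's treatment, and since the proposition is attributed to \cite{nazar}, my task is to reconstruct that argument.

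First I would differentiate the right-hand side with respect to $x$. Each summand is of the form $(x,\und{C_j})\int_1^\infty e^{-\pi t^2 (x,\und{C_j})^2} E_{q-1}(\bC[j]; t\,x_{\perp j})\,dt$, and the combination $(x,\und{C_j})\,e^{-\pi t^2(x,\und{C_j})^2}$ is, up to the $t$-substitution, the derivative of a one-dimensional Gaussian tail in the $\und{C_j}$-direction. The idea is that integrating the full $q$-dimensional Gaussian against $\sgn(\bC;y)$ over the negative $q$-plane $z$, one can perform the $y$-integration by first integrating over the line $\R\,\und{C_j}$ and then over its orthogonal complement $z\cap V_j$; the boundary contribution from the step function $\sgn(C_j,y)$ along that line produces precisely the lower-dimensional error function $E_{q-1}(\bC[j];\cdot)$ for the $(q-1)$-plane $z\cap V_j$, evaluated at the projected point. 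Carrying out this slicing for each $j$ and summing should reproduce $dE_q(\bC;x)$ as a function of $x$.

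Concretely, the key step is the Gaussian slicing computation: I would write $\pr_z(x)$ in terms of the frame, decompose $z = \R\und{C_j} \oplus (z\cap V_j)$ for each $j$, and identify $E_{q-1}(\bC[j]; t\,x_{\perp j})$ (after the change of variable $t\leftrightarrow$ scaling on the line) with the result of integrating out the $j$-th coordinate against $\sgn(C_j,y)$ in (\ref{def.gen.error}). The factor $-2$ and the sign conventions should drop out of keeping track of the derivative of $\sgn$, which is $2\delta$, and the normalization $\int_z e^{\pi(y,y)}\,dy = 1$. Once the gradients match, I would verify the limiting/boundary value: as the component of $x$ transverse to all the walls grows, $E_q(\bC;x)\to\sgn(\bC;x)$ and each integral on the right tends to $0$ because of Gaussian decay, so the difference of the two sides is constant and equal to $0$; by the regularity assumption this pins down the constant on the whole regular region.

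The main obstacle I anticipate is the bookkeeping in the slicing step, specifically reconciling the orthogonal projection $x_{\perp j}$ (in $V$, as in the Notation section) with the intrinsic projection $\pr_{z\cap V_j}$ appearing in the lower-rank error integral, together with getting the Jacobian and normalization constants exactly right so that the coefficient is $-2$ and not some other multiple. A secondary subtlety is justifying differentiation under the integral sign and the exchange of the $t$-integral with the $x$-derivative, which is routine given the uniform Gaussian decay but must be stated. Since this is precisely the inductive error-function identity established in \cite{nazar}, I would expect the cleanest route is to cite that computation directly and supply only the translation into the present notation, rather than to redo the Gaussian analysis from scratch.
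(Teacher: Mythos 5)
Your proposal is essentially the paper's own proof: the identity is obtained by combining the Gaussian-slicing derivative formula for $E_q$ (Nazaroglu's equation (25), reproduced as (\ref{naza-1}) and proved in Appendix I exactly by the decomposition $y = y_{\perp j} + y'C_j$ and the $2\delta$ coming from $\sgn'$) with the radial limit $\lim_{t\to\infty}E_q(\bC;tx)=\sgn(\bC;x)$ for regular $x$, and the paper does indeed just cite and translate Nazaroglu's computation as you anticipate. The only slight economy in the paper's version is that it never matches full gradients: it uses only the radial (Euler) derivative, writing $-\int_1^\infty \frac{d}{dt}E_q(\bC;tx)\,dt = E_q(\bC;x)-\sgn(\bC;x)$ and substituting (\ref{naza-1}) at $tx$ directly into the integrand, which avoids having to differentiate the right-hand side under the integral sign at all.
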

\begin{rem} This result is just an integrated version of equation (25) in  Proposition~3.6 in \cite{nazar}.  For convenience, we give the proof, taken from \cite{nazar}, 
in our notation. 
\end{rem}
\begin{proof}
Let $Z$ be the negative $q$-plane spanned by $\bC=\{C_1,\dots,C_q\}$, and, for $y$, $y'\in Z$, let 
$(\!(y,y')\!) = -(y,y')$. 
We also suppose that $x = \pr_Z(x)$. 
If $f$ is a smooth function on $Z$, then 
\begin{align}\label{euler-id}
-\int_1^\infty (\!(\nabla f(t\,x), x)\!)\,dt &= -\int_1^\infty \frac{d}{d t}\{f(tx)\}\,dt\\
\nass
{}&= f(x)- \lim_{t\rightarrow \infty} f(tx).\notag
\end{align}
Here $\nabla$ is the gradient operator and we assume that the radial limit of $f$ exists. 
On the other hand, by (25) Proposition~3.6 of \cite{nazar}, 
\beq\label{naza-1}
-(\!(\nabla E_q(\bC;x), x)\!) = 2 \sum_j  (\!(x,\uC_j)\!)\, e^{-\pi\,(\!(x,\uC_j)\!)^2}\, E_{q-1}(\bC[j]; x_{\perp j}).
\eeq
Moreover, for $x$ regular with respect to $\bC$, we have,  \cite{ABMP} and \cite{nazar}, Remark p.7, 
\beq\label{naza-2}
\lim_{t\rightarrow \infty} E_q(\bC;t x) = \sgn(\bC;x).
\eeq
For convenience, we will give the proof of (\ref{naza-1}) in Appendix I. Combining them and noting that the identity (\ref{euler-id}) is valid for the function $f(x)=E_q(\bC;x)$ when $x$ is regular with respect to $\bC$,  we have
$$E_q(\bC;x)-\sgn(\bC; x)  = -2 \sum_j  (x,\uC_j)\,  \int_1^\infty e^{-\pi\,t^2(x,\uC_j)^2}\, E_{q-1}(\bC[j]; tx_{\perp j})\,dt,$$
as required.  
%
\end{proof}

\begin{cor}\label{main-cor}
$$I^{00}(x;\CC) =(-1)^q 2^{-q} \sum_{I}  (-1)^{|I|}\big(\ E_q(\bC^I;x\sqrt{2}) -\sgn(\bC^I; x)\ \big),$$
and
$$ I^0(x;\CC)  =  I^{00}(x;\CC)   + (-1)^q \P_q(x;\CC) = (-1)^q 2^{-q} \sum_{I}  (-1)^{|I|}E_q(\bC^I;x\sqrt{2}).$$
\end{cor}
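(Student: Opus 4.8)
The plan is to derive Corollary~\ref{main-cor} directly from the inductive formula \eqref{basic.integral.inductive} together with the generalized-error-function identity of Proposition~\ref{key.nazar}, by a careful bookkeeping argument that matches each term to a vertex $z^I$ of the cube. First I would observe that the term $E_q(\bC^I;x\sqrt 2)-\sgn(\bC^I;x)$ appearing in the target sum is, by Proposition~\ref{key.nazar} applied to the collection $\bC=\bC^I$, equal to $-2\sum_{k}(x\sqrt2,\uC^I_k)\int_1^\infty e^{-\pi t^2 (x\sqrt2,\uC^I_k)^2}\,E_{q-1}(\bC^I[k];t\,(x\sqrt2)_{\perp k})\,dt$. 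The essential combinatorial point is then to match the index $k$ of this inner sum with the index $j$ of the outer sum in \eqref{basic.integral.inductive}: as noted in the paragraph preceding the Proposition, for a fixed $j$ the collection $\bC[j]^{I[j]}$ corresponds to the vertex $z^I$ with $I=I[j]$ (the ``front'' face, using $C_j$), while $\bC[j']^{I[j']}$ corresponds to $z^I$ with $I=\{j\}\cup I[j']$ (the ``back'' face, using $C_{j'}$). Thus summing Proposition~\ref{key.nazar} over all $I$ reproduces exactly the double sum on the right-hand side of \eqref{basic.integral.inductive}, once the signs $(-1)^{|I|}$ and the prefactor $(-1)^q2^{-q}$ are tracked.

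The key steps, in order, would be: (1) rescale $x\mapsto x/\sqrt2$ throughout so that the Gaussian exponents and the arguments of $E_{q-1}$ in \eqref{basic.integral.inductive} match those produced by Proposition~\ref{key.nazar}; (2) apply Proposition~\ref{key.nazar} to each $\bC^I$ and expand the right-hand side as a sum over the index $j$ ranging over the $q$ slots of $\bC^I$; (3) reorganize the resulting triple sum (over $I$, then over $j$, then over the reduced subsets) by grouping, for each fixed outer index $j$, the contributions where $j\notin I$ (giving the $\CC[j]$ term with $\uC_j$) and where $j\in I$ (giving the $\CC[j']$ term with $\uC_{j'}$ and an extra sign from $\sgn$ of the $j$-th factor); and (4) verify that the signs $(-1)^{|I|}$ split as $(-1)^{|I[j]|}$ or $(-1)^{1+|I[j']|}$ and that the overall constant is $(-1)^q2^{-q}$, matching \eqref{basic.integral.inductive} with its prefactor $(-1)^{q-1}2^{1-q}$. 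This establishes the first displayed identity of the Corollary. For the second identity, I would invoke Proposition~\ref{inductive.prop}, which gives $I^0(x;\CC)=I^{00}(x;\CC)-\lim_{\e\downarrow0}I^{00}(x;\CC^\e(x))$, and evaluate the limit term: as $\e\downarrow0$ the cube $S(\CC^\e(x))$ shrinks onto the single intersection point, so the generalized error functions $E_q$ in the formula for $I^{00}(x;\CC^\e)$ all tend to $\sgn$ values, and the surviving contribution is exactly $-(-1)^q\P_q(x;\CC)$ by Lemma~\ref{old-lemma}(iii).

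I expect the main obstacle to be step (3): the reindexing that identifies the inner summation index of Proposition~\ref{key.nazar} with the outer face index $j$ of \eqref{basic.integral.inductive}, while simultaneously reconciling the two ways a vertex $z^I$ is reached (through a front face of $\CC[j]$ versus a back face of $\CC[j']$). One must check that $\bC^I[k]$, the reduced tuple from Proposition~\ref{key.nazar}, genuinely coincides with $\bC[j]^{I[j]}$ (resp. $\bC[j']^{I[j']}$) as ordered collections of vectors in $V_j$ (resp. $V_{j'}$), including the orthogonal projections $x_{\perp j}$; here the definitions \eqref{C[j]} and \eqref{C[j']} and the compatibility of projection with the inner products $(x,\uC_k)$ must be used. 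A secondary subtlety is that Proposition~\ref{key.nazar} requires $x$ to be regular with respect to each $\bC^I$, which holds precisely under the standing regularity assumption on $x$ with respect to $\CC$; I would note this explicitly so that \eqref{naza-2} and the radial limit in \eqref{euler-id} are justified for every vertex simultaneously.
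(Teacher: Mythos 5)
Your argument for the first displayed identity is essentially the paper's own: starting from (\ref{basic.integral.inductive}), one groups, for each subset $I$, the summand with $j\notin I$ (coming from the face $\CC[j]$, with $I=I[j]$) together with the summand with $j\in I$ (coming from $\CC[j']$, with $I=\{j\}\cup I[j']$), and recognizes the result as Proposition~\ref{key.nazar} applied to $\bC^I$ after the rescaling $x\mapsto x/\sqrt2$. The checks you flag --- that $\bC^I[k]$ agrees with $\bC[j]^{I[j]}$ (resp.\ $\bC[j']^{I[j']}$) as ordered tuples of projected vectors, that $(-1)^{|I|}=(-1)^{|I[j]|}$ or $-(-1)^{|I[j']|}$ so the explicit minus sign in (\ref{basic.integral.inductive}) is absorbed, and that regularity of $x$ with respect to $\CC$ gives regularity with respect to every $\bC^I$ --- are exactly the ones needed, and they go through.

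The gap is in your evaluation of $\lim_{\e\downarrow 0}I^{00}(x;\CC^\e(x))$. It is not true that ``the generalized error functions $E_q$ in the formula for $I^{00}(x;\CC^\e)$ all tend to $\sgn$ values'' as $\e\downarrow 0$: the relation $E_q(\bC;tx)\to\sgn(\bC;x)$ is a radial (large argument) limit, cf.\ (\ref{naza-2}), and has nothing to do with shrinking the cube. Worse, if one literally replaced each $E_q((\bC^\e)^I;x\sqrt2)$ by $\sgn((\bC^\e)^I;x)$ in the first identity applied to $\CC^\e(x)$, every bracket would vanish and the limit would come out to be $0$, not the $\P_q$-term. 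The mechanism is the opposite of what you describe: as $\e\downarrow 0$ all $2^q$ tuples $(\bC^\e)^I$ converge to the \emph{single} tuple $B(s(x))$, which spans a negative $q$-plane by good position, so by continuity of $E_q$ in the collection the $E_q$-terms contribute a common value times $\sum_I(-1)^{|I|}=0$ and drop out; what survives is the alternating sum of the $\sgn$-terms, which factors as $\prod_{j}\big(\sgn(x,B_j(s(x)_j-\e))-\sgn(x,B_j(s(x)_j+\e))\big)$. Since $(x,B_j(t))$ is affine in $t$ and vanishes exactly at $t=s(x)_j$, this product is independent of small $\e>0$ and an elementary sign computation identifies the resulting expression with the $\P_q(x;\CC)$-term of the Corollary; the appeal to Lemma~\ref{old-lemma}(iii) is not what does the work here. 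With the limit evaluated this way, the second identity follows from Proposition~\ref{inductive.prop} exactly as you intend.
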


Note that the second identity in Corollary~\ref{main-cor} follows from Proposition~\ref{inductive.prop}, since the first identity implies that 
$$\lim_{\e\downarrow 0}\,I^{00}(x;\CC^\e(x)) = -(-1)^q \P_q(x;\CC).$$
The identity of Theorem~\ref{main.theo.1} follows immediately from this 
and the continuity of $E(\bC^I;x)$ with respect to $\bC^I$. 


\begin{cor}\label{cor7.6}
\begin{align}\label{basic-formula-1}
I(\tau,x,\CC) &:= \int_{[0,1]^q} \phi_{\CC}^*(\ph_{KM}(\tau,x))\\
\nass
{}&= \qq^{Q(x)}\,(-1)^q 2^{-q} \sum_{I}  (-1)^{|I|}E_q(\bC^I;x\sqrt{2v})\notag
\end{align}
\end{cor}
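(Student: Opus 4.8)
The plan is to reduce Corollary~\ref{cor7.6} to the already-established closed formula of Corollary~\ref{main-cor} (equivalently Theorem~\ref{main.theo.1}) by stripping off the $\tau$-dependence of the Schwartz form. The only ingredients needed are the definition of the integral over the singular cube and the scaling relation between $\ph_{KM}(\tau,x)$ and the semiclassical form $\ph_{KM}^0$ recorded in Section~\ref{section5}; no new geometric input is required.

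First I would unwind the definitions. Since $\phi_\CC$ is the parametrization $\rho_\CC$ of the singular $q$-cube $S(\CC)$ from (\ref{S-param}), the pullback integral over $[0,1]^q$ is by definition the integral of the form over $S(\CC)$, so that
$$I(\tau,x,\CC) = \int_{S(\CC)} \ph_{KM}(\tau,x).$$
Next I would invoke the relation $\ph_{KM}(\tau,x) = \ph_{KM}^0(v^{\frac12}x)\,\qq^{Q(x)}$ from Section~\ref{section5}. The factor $\qq^{Q(x)}$ is a scalar ($0$-form on $D$, independent of the point $z$), so it pulls out of the integral and leaves
$$I(\tau,x,\CC) = \qq^{Q(x)} \int_{S(\CC)} \ph_{KM}^0(v^{\frac12}x) = \qq^{Q(x)}\, I^0(v^{\frac12}x;\CC),$$
where $I^0(w;\CC) := \int_{S(\CC)}\ph_{KM}^0(w)$ is exactly the quantity computed in Corollary~\ref{main-cor}, now evaluated at the rescaled Schwartz argument $w = v^{\frac12}x$.

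Finally I would substitute into the identity $I^0(w;\CC) = (-1)^q 2^{-q}\sum_I (-1)^{|I|}E_q(\bC^I;w\sqrt 2)$ of Corollary~\ref{main-cor}. Because $v^{\frac12}\sqrt 2 = \sqrt{2v}$, the arguments of the generalized error functions become $x\sqrt{2v}$, and collecting the scalar $\qq^{Q(x)}$ yields precisely (\ref{basic-formula-1}). One small point I would flag for completeness: the formula of Corollary~\ref{main-cor} holds for \emph{all} $x$ (it was proved for regular $x$ and extended by continuity), so the substitution $w=v^{\frac12}x$ with $v>0$ is legitimate; regularity is in any case preserved under the positive scaling $x\mapsto v^{\frac12}x$, since $(v^{\frac12}x,C)=v^{\frac12}(x,C)$.

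I do not expect a genuine obstacle here, as the corollary is a formal consequence of Corollary~\ref{main-cor} together with the scaling relation for $\ph_{KM}(\tau,x)$. The only thing worth checking carefully is that replacing the Schwartz argument $x$ by $v^{\frac12}x$ leaves the differential-form part of $\ph_{KM}^0$ on $D$ structurally unchanged, so that integrating $\ph_{KM}^0(v^{\frac12}x)$ over $S(\CC)$ is literally $I^0$ evaluated at the rescaled argument. This is immediate from the global expression (\ref{ph-formula-tau}) for $\ph_{KM}^0$, in which the dependence on the vector argument enters only through the coefficient forms $\o(s)$, the scalar Gaussian factor, and the powers of $v$ that have already been absorbed.
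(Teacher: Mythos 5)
Your argument is correct and is essentially the paper's own (implicit) justification: Corollary~\ref{cor7.6} is stated there as an immediate consequence of Corollary~\ref{main-cor} together with the scaling relation $\ph_{KM}(\tau,x)=\ph_{KM}^0(v^{\frac12}x)\,\qq^{Q(x)}$, exactly as you argue. Your extra checks (continuity in $x$, and that the form part of $\ph_{KM}^0$ depends on the vector argument only through evaluation) are sound and match the paper's conventions.
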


\section{Shadows of indefinite theta series}\label{section8}

In this section, we compute the shadow of $I_{\mu}(\tau,\CC)$, i.e., the complex conjugate of its image under the lowering operator
$-2i v^2 \frac{\d}{\d \bar \tau}$.  The crucial facts are the relation (i) of Lemma~\ref{lem2.3}, 
$$-2i v^2 \frac{\d}{\d \bar\tau}\ph_{KM}(\tau,x) = d \psi(\tau,x),$$
and the pullback identity (ii) of Lemma~\ref{lem1.6}. Then, as in the proof of Proposition~\ref{inductive.prop}, we have
\beq\label{shadow-1}
-2i v^2 \frac{\d}{\d \bar \tau}\{ I_\mu(\tau,\CC)\}  = \sum_{x\in \mu+L} \int_{\d S(\CC)} \psi(\tau,x),
\eeq
and 
\begin{align} \label{shadow-2}
\int_{\d S(\CC)} \psi(\tau,x)&= 2^{-\frac12}\,v^{\frac32} \sum_{j=1}^q \bigg(\ (x,\uC_{j'})\,e^{-2\pi v (x, \uC_{j'})^2}\,\qq^{-\frac12(x,\uC_{j'})^2}\,\,I(\tau,x_{\perp j'}, \CC[j'])\\
\nass
{}&\qquad \qquad \qquad
- (x,\uC_{j})\,e^{-2\pi v (x, \uC_{j})^2}\,\qq^{-\frac12(x,\uC_j)^2}\,I(\tau, x_{\perp j},\CC[j])\ \bigg).\notag
\end{align}
where we use the notation introduced in Corollary~\ref{cor7.6}.
The combination of (\ref{shadow-1}), (\ref{shadow-2}) and (\ref{basic-formula-1})
yields an explicit formula for the shadow of $I_\mu(\tau,\CC)$, a (typically non-holomorphic) modular form of 
weight $2 - \frac{p+q}2$.

Now suppose that the collection $\CC$ is rational. For each $j$, write
$$L_j^0 = L\cap \Q C_j, \quad L_j^1 = L\cap V_j$$
so that, for suitable coset representatives $\mu^0_{j,r}\in (L^0_j)^\vee$ and $\mu_{j,r}^1\in (L_j^1)^\vee$, 
$$\mu+L = \bigsqcup_r \bigg( (\mu_{j,r}^0+L^0_j) \oplus (\mu_{j,r}^1+L_j^1)\bigg).$$

Then, writing $I_\mu(\tau,\CC,L)$ to make explicit the dependence on the lattice $L$,
\begin{align*}
-2i v^2 \frac{\d}{\d \bar \tau}\{I_\mu(\tau,\CC,L)\}& 
= 2^{-\frac12}\,\sum_{j=1}^q
\sum_r v^{\frac32} \,\overline{\theta_{\mu^0_{j,r}}(\tau,L^0_j)}\,I_{\mu^1_{j,r}}(\tau,\CC[j],L^1_j)\\
\nass
{}&\qquad \qquad \qquad\qquad
- \sum_{r'}v^{\frac32} \,\overline{\theta_{\mu^0_{j',r'}}(\tau,L^0_{j'})}\,I_{\mu^1_{j',r'}}(\tau,\CC[j'],L^1_{j'}),
\end{align*}
where 
$$\theta_{\mu^0_{j,r}}(\tau,L^0_j)  = \sum_{x^0\in \mu^0_{j,r}+L_j^0} (x^0,\uC_j)\,\qq^{\frac12 (x^0,\uC_j)^2}$$ 
is a unary theta series of weight $\frac32$. 
Thus,  the image of $I_\mu(\tau, \CC)$ under the $\xi$-operator is a linear combination of products of unary theta series of weight $\frac32$ 
and the conjugates of indefinite theta series for spaces of signature $(p,q-1)$, as asserted in Corollary~\ref{cor1.3}.

\begin{rem}  While we have only discussed the shadows of `cubical' integrals, the case of simplical integrals can the 
treated in the same way. 
\end{rem}


\section{The case of a simplex}

In this section, we work out the theta integral over a simplex. The general inductive procedure is the 
same as in the cubical case, but some interesting differences arise. 

\subsection{Some geometry}

For $V$ of signature $(p,q)$, we consider a collection of vectors
$$\CC = [C_0,\dots,C_q]$$
$C_i\in V$ with $(C_i,C_i)<0$. We suppose that, for all $j$, 
$$P_j=\sspan\{C_0, \dots, \widehat{C_j}, \dots, C_q\}$$
is a negative $q$-plane. 
We assume that the collection $\CC$ is linearly independent and let $U = \sspan(\CC)$. 
Note that $\sig(U)=(1,q)$, and let $D(U)$ be the space of oriented negative $q$-planes in $U$.

Let 
$$\CC^\vee = [C_0^\vee, \dots, C_q^\vee] = \CC\,(\CC,\CC)^{-1}$$
be the dual basis of $U$ with respect to $(\ ,\ )$.  
Since $C_j^\vee$ then spans $P_j^\perp$, we have $(C_j^\vee,C_j^\vee)>0$.
Let 
$$\Delta_q = \{s=[s_0,\dots,s_q]\in \R\mid \text{ $s_j\ge 0$, for all $j$, $ \sum_j s_j=1$}\},$$
and, for $s\in \Delta_q$, let 
$$C^\vee(s) = \sum_{j=0}^q s_j C_j^\vee = \CC^\vee\, {}^ts.$$
Note that $s_j = (C^\vee(s), C_j)$. 
We say that $\CC$ is in {good position} if 
$$0<(C^\vee(s),C^\vee(s)) = s (\CC^\vee,\CC^\vee){}^ts = s (\CC,\CC)^{-1}{}^ts$$ 
for all $s\in \Delta_q$. For example, if all entries of $(\CC^\vee,\CC^\vee)=(\CC,\CC)^{-1}$ 
are non-negative, then $\CC$ is in good position.

Given $\CC$ in good position, we define 
$$z(s) = C^\vee(s)^{\perp}\in D,$$
with orientation $\nu_{z(s)}\in \bbigwedge^q z(s)$ defined by 
\beq\label{orient-1}
C^\vee(s)\wedge \nu_{z(s)} = \nu_U,
\eeq
where we have fixed an orientation 
$$\nu_U = C_0\wedge C_1\wedge\dots\wedge C_q$$ 
in $\bbigwedge^{q+1}U$. 
For example, 
$$z_j = z(0,\dots, 1, \dots,0) = (C_j^\vee)^\perp=\sspan\{C_0,\dots, \widehat{C_j},\dots, C_q\}$$
with orientation given as follows.
Let $R_j$ be the $j$th column of the matrix $(C,C)^{-1}$, so that 
\beq\label{dual-rel}
C_j^\vee = \CC\,R_j = \sum_{i=0}^q R_{ij}C_i.
\eeq
Then 
$$C_j^\vee\wedge C_0\wedge \dots\wedge \widehat{C_j}\wedge\dots \wedge C_q =(-1)^j R_{jj}\,C_0\wedge C_1\wedge\dots\wedge C_q.$$
Since $R_{jj} = (C_j^\vee, C_j^\vee)>0$, 
\beq\label{face-orient}
z_j =\sspan\{C_0,\dots, \widehat{C_j},\dots, C_q\}[j]
\eeq
where the `twist' $[j]$ indicates that the given basis gives $(-1)^j \nu_{z(s)}$. 

For example, for $q=1$ we have
\beq\label{q1-orients}
z_0 = \sspan\{C_1\}_{\text{p.o.}}, \qquad z_1 = \sspan\{-C_0\}_{\text{p.o.}}.
\eeq
In particular, good position requires $(C_0,C_1)>0$ in this case!
For $q=2$, we have
\beq\label{q2-orients}
z_0 = \sspan\{C_1,C_2\}_{\text{p.o.}}, \quad z_1=\sspan\{-C_0,C_2\}_{\text{p.o.}}, \qquad z_2 = \sspan\{C_0,C_1\}_{\text{p.o.}}.
\eeq

By construction, all the $z_j$'s lie in the same component of $D$ and, by linear independence, the map
$$\phi_\CC:\Delta_q \lra D, \qquad s\mapsto z(s)$$
is an embedding. Let $S(\CC)= \phi_{\CC}(\Delta_q)$ be its image. 
The $j$th face of this tetrahedron is given by restricting to the subset of $s$ with $s_j=0$, so that it is given as 
$$\{ z\in S(\CC) \mid (C^\vee(s),C_j)=0\} = \{z\in S\mid C_j\in z\}.$$
Moreover, in the image $U_j$ of $U$ under the projection to $V_j =C_j^\perp$, we have
$$[C_0^\vee,\dots, \widehat{C_j^\vee},\dots, C_q^\vee]$$
is the dual basis to 
$$\CC_{\perp j}:=[C_{0\perp j}, \dots, C_{q\perp j}].$$
Thus, up to orientation, to be discussed in a moment, the restriction of $\phi_{\CC}$ to a face of $\Delta_q$ is 
again a simplex $\phi_{\CC_{\perp j}}$ in $D(V_j)$!  Note that, in particular, $\CC$ in good position implies that $\CC_{\perp j}$ is in good position for all $j$. 

Next consider $S(\CC)\cap D_x$.  This set depends only on $\pr_U(x)$ and is given by 
$$S(\CC)\cap D_x 
=\begin{cases} 
S(\CC)\cap D(U)_{\pr_U(x)}& \text{ if $Q(\pr_U(x))>0$,}\\
\nass
\emptyset&\text{if $\pr_U(x)\ne0$ and $Q(\pr_U(x))\le0$,}\\
\nass
S(\CC)&\text{if $\pr_U(x)=0$.}
\end{cases}
$$
Here, when $Q(\pr_U(x))>0$ so that $\pr_U(x)$ is a positive vector in $U$,  $D(U)_{\pr_U(x)}$ is a pair of oriented negative $q$-planes in $U$ given by the orthogonal complement 
to $\pr_U(x)$ with its two orientations.  
One of these has orientation determined by $\pr_{U}(x)$ by the analogue of the recipe (\ref{orient-1}). 
Then $S(\CC)\cap D_x =\phi_\CC(s(x))$ is the same $q$-plane with orientation shifted by
$$\sgn(\pr_U(x),C^\vee(s(x)) = \sgn(x,C^\vee(s(x))).$$
To determine $s(x)$, we solve
$$\pr_U(x) = \l \,C^\vee(s), \qquad s\in \Delta_q,\quad \l \in \R^\times$$
i.e.,
$$(x,C_j) = \l\,s_j, \qquad 0\le j\le q.$$
The existence of a solution implies that $\sgn(x,C_j)$, if non-zero,  is  independent of $j$ and that 
\beq\label{def-lambda}
\sum_{j=0}^q (x,C_j) = \l.
\eeq
Thus we have the following simple description. 
\begin{lem} Suppose that $Q(\pr_U(x))>0$. If $\sgn(x,C_j)$ is independent of $j$ when it is non-zero, then 
$$S(\CC)\cap D_x =\phi_\CC(s(x)),$$  
where
$$s(x)_j = (x,C_j)\l(x;\CC)^{-1}$$
with
$$\l(x,\CC)=\sum_j (x,C_j).$$
Otherwise
$S(\CC)\cap D_x =\phi_\CC(s(x))$ is empty. 
\end{lem}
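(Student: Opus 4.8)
The plan is to reduce the entire computation to linear algebra inside the nondegenerate subspace $U=\sspan(\CC)$ of signature $(1,q)$, exploiting that every plane $z(s)=C^\vee(s)^\perp$ lies in $U$. First I would observe that, since $z(s)\subset U$ and $x-\pr_U(x)$ is orthogonal to $U$, one has $\phi_\CC(s)\in D_x$ if and only if $\pr_U(x)$ is orthogonal to $z(s)$ within $U$. Because $C^\vee(s)$ is positive --- this is exactly the good-position hypothesis, and it guarantees that $C^\vee(s)$ is non-isotropic --- its orthogonal complement $z(s)$ in $U$ is a negative $q$-plane and the double complement satisfies $z(s)^\perp\cap U=\R\,C^\vee(s)$. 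Hence $\phi_\CC(s)\in D_x$ is equivalent to the one-dimensional condition $\pr_U(x)\in\R\,C^\vee(s)$, i.e. to the existence of $\l\in\R$ with $\pr_U(x)=\l\,C^\vee(s)$. When $Q(\pr_U(x))>0$ the vector $\pr_U(x)$ is positive, so any such $\l$ is automatically nonzero.

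Next I would solve this equation for $s\in\Delta_q$. Pairing $\pr_U(x)=\l\,C^\vee(s)$ with $C_j$ and using the dual-basis relation $(C^\vee(s),C_j)=s_j$ together with $(\pr_U(x),C_j)=(x,C_j)$ yields $(x,C_j)=\l\,s_j$ for each $j$. Summing over $j$ and invoking $\sum_j s_j=1$ forces $\l=\sum_j(x,C_j)=\l(x;\CC)$ and therefore $s_j=(x,C_j)\,\l(x;\CC)^{-1}$, which is the asserted formula for $s(x)$. This pins down $s(x)$ uniquely, and since $\phi_\CC$ is an embedding the corresponding point $\phi_\CC(s(x))$ is the only candidate for a point of $S(\CC)\cap D_x$.

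It then remains to decide when this $s(x)$ actually lies in $\Delta_q$, i.e. when $s(x)_j\ge 0$ for all $j$, and to confirm that it solves the original equation. For sufficiency I would use the expansion $v=\sum_j(v,C_j)C_j^\vee$, valid for every $v\in U$; applied to $v=\pr_U(x)$ it gives $\pr_U(x)=\sum_j(x,C_j)C_j^\vee=\l\,C^\vee(s(x))$, so the equation does hold. For the sign condition, $s(x)_j=(x,C_j)/\l$ is non-negative for all $j$ exactly when every nonzero $(x,C_j)$ shares the sign of $\l=\sum_j(x,C_j)$, which is precisely the requirement that $\sgn(x,C_j)$ be independent of $j$ when nonzero. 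In that case $\l\ne0$ --- otherwise all $(x,C_j)$ vanish, forcing $\pr_U(x)=0$ against $Q(\pr_U(x))>0$ --- so $s(x)\in\Delta_q$ and $S(\CC)\cap D_x=\phi_\CC(s(x))$; if instead two of the $(x,C_j)$ have opposite signs, no single scalar $\l$ can render all coordinates non-negative, the equation has no solution in $\Delta_q$, and $S(\CC)\cap D_x=\emptyset$.

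The genuinely delicate point is not the linear algebra but keeping the case analysis honest: the good-position hypothesis enters solely to make $C^\vee(s)$ positive, so that the reduction to a one-dimensional span is valid, whereas membership $s(x)\in\Delta_q$ is governed entirely by the sign pattern of the numbers $(x,C_j)$, a condition that must be carefully disentangled from the non-vanishing of $\l(x;\CC)$.
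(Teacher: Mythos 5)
Your proof is correct and follows essentially the same route as the paper: work inside $U$, use that good position makes $C^\vee(s)$ anisotropic so that $\phi_\CC(s)\in D_x$ reduces to $\pr_U(x)=\l\,C^\vee(s)$, then pair with the $C_j$ to force $s_j=(x,C_j)/\l$ and $\l=\sum_j(x,C_j)$, with the sign pattern of the $(x,C_j)$ governing membership in $\Delta_q$. You are in fact slightly more explicit than the paper about the double-orthocomplement step and the sufficiency check via $\pr_U(x)=\sum_j(x,C_j)C_j^\vee$, which the paper leaves implicit.
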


When  $S(\CC)\cap D_x$ is non-empty, we determine the intersection number of the oriented $q$-simplex $S(\CC)$ with the oriented codimension $q$
cycle $D_x$.  The claim is that this is determined by the sign of the inner product of $\pr_U(x)$ with $C^\vee(s(x))$. 
\begin{prop}\label{int-num-prop}  Let 
$\P_q^{\triangle}(x,\CC)$ be as in (\ref{def-phi-tri}). 
Then, if $x$ is regular with respect to $\CC$,
\beq\label{int-num-form}
I(S(\CC),D_x) = \P_q^{\triangle}(x,\CC).
\eeq
\end{prop}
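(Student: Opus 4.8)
The plan is to reduce the identity \eqref{int-num-form} to a single local orientation computation at the unique intersection point, using regularity to discard the degenerate cases and the symmetry $x\mapsto -x$ to collapse the two surviving cases into one.

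First I would dispose of the case where the signs $\sgn(x,C_j)$ are not all equal. Since $x$ is regular, $(x,C_j)\neq 0$ for every $j$, so $\pr_U(x)=\sum_j(x,C_j)C_j^\vee\neq 0$. If the $\sgn(x,C_j)$ are not all equal, then by the Lemma immediately preceding the Proposition (together with the case analysis of $S(\CC)\cap D_x$ given just before it, which settles $Q(\pr_U(x))\le 0$ using $\pr_U(x)\neq 0$), the set $S(\CC)\cap D_x$ is empty, so $I(S(\CC),D_x)=0$; and inspection of \eqref{def-phi-tri} shows $\Ptri_q(x,\CC)=0$, since each of the two products then contains a vanishing factor. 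Conversely, if all the $\sgn(x,C_j)$ agree, say all equal to $\e=\pm1$, then $\lambda=\lambda(x,\CC)=\sum_j(x,C_j)$ has sign $\e$, the point $s(x)$ lies in the interior of $\Delta_q$, and $Q(\pr_U(x))=\tfrac12\lambda^2\,(C^\vee(s(x)),C^\vee(s(x)))>0$ by good position; so the intersection is the single interior point $z^*:=\phi_\CC(s(x))$, while $\Ptri_q(x,\CC)$ equals $1$ if $\e=-1$ and $(-1)^q$ if $\e=+1$.

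Next I would use the symmetry $x\mapsto -x$ to reduce the two surviving cases to one. Replacing $x$ by $-x$ leaves $s(x)$ unchanged (the ratios $(x,C_j)/\lambda$ are invariant), hence fixes $z^*$, and $D_{-x}=D_x$ as a set; but the Kudla--Millson co-orientation of this codimension-$q$ cycle is governed by $x$, so passing from $x$ to $-x$ reverses it on a rank-$q$ normal bundle and multiplies the local intersection number by $(-1)^q$. Since one checks directly from \eqref{def-phi-tri} that $\Ptri_q(-x,\CC)=(-1)^q\Ptri_q(x,\CC)$, it suffices to treat a single case; I take $\e=-1$, i.e. $\lambda<0$, where $\Ptri_q(x,\CC)=1$, so the goal becomes to show $I(S(\CC),D_x)=+1$. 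For this I use the definition \eqref{def-inter}. Everything depends only on $\pr_U(x)$, so I may assume $x\in U$, whence $x=\lambda\,\xi$ with $\xi:=C^\vee(s(x))$ a positive vector spanning $(z^*)^{\perp_U}$. At $z^*$ one has $T_{z^*}D\cong\operatorname{Hom}(z^*,(z^*)^{\perp_V})$, and a short computation gives $T_{z^*}D_x=\operatorname{Hom}(z^*,U^{\perp})$ while $T_{z^*}S(\CC)=\operatorname{Hom}(z^*,\R\xi)$; since $(z^*)^{\perp_V}=\R\xi\oplus U^{\perp}$, the simplex is transverse to $D_x$ and its tangent space is exactly the normal bundle of $D_x$. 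The intersection number is therefore the sign comparing the simplex orientation on $\operatorname{Hom}(z^*,\R\xi)$ with the co-orientation of $D_x$: via \eqref{orient-1} the simplex orientation of $z^*$ is the $\nu_{z^*}$ with $\xi\wedge\nu_{z^*}=\nu_U$, while the co-orientation of $D_x$ is the one attached to $\pr_U(x)=\lambda\xi$ with $\lambda<0$, and tracking these through the identification $\operatorname{Hom}(z^*,\R\xi)\cong(z^*)^{*}$ and the pushforward of the standard orientation of $\Delta_q$ yields $I(S(\CC),D_x)=+1$.

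The main obstacle is precisely this last sign: reconciling the four orientation conventions in play --- the recipe \eqref{orient-1} for $\nu_{z(s)}$, the standard orientation of $\Delta_q$ and its pushforward under $d\phi_\CC$, the Kudla--Millson co-orientation of $D_x$, and the definition \eqref{def-inter} of the local intersection number --- so as to be certain the reference case gives $+1$ rather than $-1$. I would pin this down against the explicit low-dimensional data: for $q=1$ the orientations \eqref{q1-orients} make $\phi_\CC$ a geodesic from $\sspan\{C_1\}_{\text{p.o.}}$ to $\sspan\{-C_0\}_{\text{p.o.}}$ meeting $D_x$ once, and for $q=2$ one uses \eqref{q2-orients}; these fix the normalization, and the $x\mapsto -x$ symmetry then completes the proof.
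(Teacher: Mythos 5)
Your overall strategy coincides with the paper's: show both sides vanish when the $\sgn(x,C_j)$ are not all equal, and reduce the remaining case to a transversality-plus-orientation computation at the unique interior intersection point $\phi_\CC(s(x))$. Your preliminary reductions are correct, and your observation that $\sgn(x,C^\vee(s(x)))=\sgn(\lambda)$ follows at once from $\pr_U(x)=\lambda\, C^\vee(s(x))$ with $C^\vee(s(x))$ positive is actually cleaner than the paper's double sum over the entries of $(\CC,\CC)^{-1}$. The $x\mapsto -x$ symmetry is also sound: both the local intersection number and $\Ptri_q(x,\CC)$ are multiplied by $(-1)^q$.

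There is, however, a genuine gap at exactly the point you flag: the value of the intersection number in your reference case is asserted (``tracking these \dots yields $+1$'') rather than computed, and the proposed normalization by checking $q=1$ and $q=2$ against (\ref{q1-orients}) and (\ref{q2-orients}) does not determine the sign for general $q$. After your reductions what remains is a constant $c_q=\pm1$ for each $q$, and nothing in your argument forces $c_q$ to be independent of $q$; the very form of the answer, $(-1)^q\sgn(\lambda)^q$, shows that the orientation comparison genuinely depends on $q$, so low-dimensional checks cannot substitute for the general computation. The paper closes this with one explicit identity: under $T_z(D)\simeq U(z)^q$, the normal space to $D_x$ is spanned by the frames $\tau_i(x)$ ($x$ placed in the $i$th slot), the tangent space to $S(\CC)$ at $\phi_\CC(s(x))$ by the $\tau_j(w)$ with $w=C^\vee(s(x))$, and the local intersection number equals
$$\sgn (\!(\, \tau_1(x)\wedge\cdots\wedge\tau_q(x),\ \tau_1(w)\wedge\cdots\wedge\tau_q(w)\,)\!) \ =\ (-1)^q\det\big((x,w)\,1_q\big)\big/|(x,w)|^q\ =\ (-1)^q\,\sgn(x,C^\vee(s(x)))^q,$$
the factor $(-1)^q$ coming from the minus sign in the metric $(\!(\eta,\eta')\!)=-\tr((\eta_i,\eta'_j))$ on $U(z)^q$. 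This single formula handles all $q$ and both sign cases simultaneously, with no need for a reference-case normalization. To complete your proof you would need to supply an equivalent general-$q$ computation of this determinant of pairings, which is precisely the substance of the proposition.
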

Suppose that $\pr_U(x)\ne 0$. Then $\P_q^{\triangle}(x,\CC)$ is non-zero precisely when all of the non-zero $\sgn(x,C_i)$'s coincide.  
Suppose further that $s(x)$ lies on $r$ `walls', i.e., that $r$ of the inner products $(x,C_j)$ vanish. Then
$$\P_q^{\triangle}(x,\CC) = 2^{-r} \,(-1)^q \sgn(\l(x,\CC))^q.$$
When $\pr_U(x)=0$, then $\P_q^{\triangle}(x,\CC)=2^{-q}$ for $q$ even and vanishes for $q$ odd.  
Note that, if $x$ is not regular with respect to $\CC$, then the intersection number is not defined. 

\begin{proof}
Recall that, if $\zeta\in \OFD$ is a properly oriented $q$-frame projecting to $z\in D$, then 
$T_z(D)\simeq U(z)^q$, where $U(z)=z^\perp$ in $V$.  Also note that, under this isomorphism, the natural metric on 
$T_z(D)$ is given by $(\!(\eta,\eta')\!) = -\tr((\eta_i,\eta'_j))$ where $\eta = [\eta_1,\dots,\eta_q]$ and 
$\eta'=[\eta'_1,\dots,\eta'_q]$.
For our fixed collection $\CC$ with $U=\sspan\{\CC\}$, 
we have an embedding $D(U) \lra D$, where $D(U)$ is the space of oriented negative $q$-planes in $U$. Recall that 
$\sig(U)=(1,q)$. For $z\in D(U)$, write $W(z)$ for its orthogonal complement in $U$. 
Again supposing that $\zeta\in\OFD$ with projection $z$ is given, we have 
$$T_z(D(U)) \simeq W(z)^q.$$
Note that $\dim W(z)=1$, and suppose that $w=w(z)$ is a properly oriented basis vector. 
Then $T_z(D(U))$ is spanned by the vectors $\tau_1(w)=[w,0,\dots,0]$, $\tau_2(w) = [0,w,0,\dots,0]$, etc. 
Similarly, if $z\in D_x$, then the normal subspace to $T_z(D_x)$ is spanned by the vectors $\tau_i(x)$, $1\le i\le q$. 
For $z = \phi_\CC(s(x))$, we have $w= C^\vee(s(x))$, and the intersection 
number 
of these two cycles is then given by 
\begin{align*}
\sgn(\!(\, \tau_1(x)\wedge\dots\wedge \tau_q(x), \tau_1(w)\wedge\dots\wedge\tau_q(w)\,)\!) &= (-1)^q\det((\tau_i(x),\tau_j(w)))\\
\nass
{}&=(-1)^q \sgn(x,C^\vee(s(x)))^q.
\end{align*}
But now 
$$C^\vee(s(x)) = \lambda(x,\CC)^{-1}\sum_j (x,C_j)\, C_j^\vee,$$
and, recalling (\ref{dual-rel}), 
$$(x,C^\vee(s(x))) = \lambda(x,\CC)^{-1}\sum_j (x,C_j) (x,C_j^\vee) = \lambda(x,\CC)^{-1}\sum_{i,j}(x,C_j) R_{i,j}(x,C_i).$$
If we assume that all of the non-zero $(x,C_i)$'s have the same sign, and recalling that $R_{i,j}\ge 0$, we see that 
$$\sgn(x,C^\vee(s(x))) = \sgn(\l(x,\CC)).$$
\end{proof}


For $q=1$, and $x$ regular with respect to $\CC$, 
$$I(S(\CC),D_x)=-\frac12(\sgn(x,C_0)+\sgn(x,C_1)).$$
Note that, due to the `twist' occurring in (\ref{face-orient}), our negative lines are $z_0 = \sspan\{C_1\}_{\text{p.o.}}$ and $z_1 = \sspan\{-C_0\}_{\text{p.o.}}$ 
Thus the `cubical' data is $\CC^{\square} = \{C_1,-C_0\}$, and $I(S(\CC),D_x)$ coincides with 
$$\Psq_1(x,\CC^{\square}) = \frac12(\sgn(x,-C_0) - \sgn(x,C_1)).$$


\subsection{The integral of the theta form}

We would like to compute
$$I^0(x;\CC) = \int_{S(\CC)}\ph^0_{KM}(x).$$

The case $q=1$, coincides with the Zwegers case for $\CC^{\square} = \{C_1,-C_0\}$, and we have 
\beq\label{tetra:q=1}
I^0(x/\sqrt{2};\CC) =- \frac12(\, E_1(C_0;x)+E_1(C_1;x)).
\eeq
As a check on signs, note that, since 
$$\lim_{t\rightarrow \infty} E_1(C;tx) = \sgn(x,C),$$
this is consistent with the value of $I(S(\CC),D_x)$ for $q=1$ above.

For the general case, we suppose that $x$ is regular with respect to $\CC$ and proceed by induction. 
Due to regularity, $S(\CC)\cap D_x$ is either empty or is a single point $\phi_\CC(s(x))$ on the interior of $S(\CC)$. 
Recall that, by (\ref{face-orient}), 
$$\d S(\CC) = \sum_{j=0}^q (-1)^j S(\CC_{\perp j}).$$
 Then by Remark~3.4 of \cite{FK-I}, we have
\beq\label{pre-inductive}
I^0(x;\CC)=\int_{S(\CC)} \ph_{KM}^0(x) = I(S(\CC),D_x) + \int_{\d S(\CC)} \Psi^0_{KM}(x).
\eeq
Since   $\lim_{t\rightarrow \infty} \Psi^0_{KM}(tx)=0$, this identity gives the limiting value
$$\lim_{t\rightarrow \infty} I^0(tx;\CC)=\lim_{t\rightarrow \infty}\int_{S(\CC)} \ph_{KM}^0(tx) = I(S(\CC),D_x).$$ 

Now using Corollary~\ref{lem1.5}, we have the inductive formula
\begin{align}\label{MIF}
\int_{\d S(\CC)} \Psi^0_{KM}(x) &= \sum_{j=0}^q  (-1)^j \int_{S(\CC_{\perp j})} \kappa_j^*\Psi^0_{KM}(x)\\
\nass
{}&= \sum_{j=0}^q2^{\frac12}\,(x,\uC_j)\int_1^\infty e^{-2\pi t^2 (x,\uC_j)^2}\, I^0(t x_{\perp j};\CC_{\perp j})\, dt.\notag
\end{align}
 
Using this, we obtain the following explicit formula. 
\begin{theo} \label{tetra-theo} For a subset $I\subset \{0, 1, \dots, q\}$, let
$\CC^{(I)}$ be the collection of $q+1-|I|$ elements where the $C_i$ with $i\in I$ have been omitted. 
$$I^0(x/\sqrt{2};\CC) = (-1)^q 2^{-q}\sum_{r=0}^{[q/2]}\sum_{\substack{I \\ |I| = 2r+1}} E_{q-2r}(\CC^{(I)}; x).$$
Here $E_0(\dots)=1$. 
\end{theo}

\begin{rem}
(i) Note that if this formula is proved for $x$ regular, then it holds for all $x$ by continuity.\hfb 
(ii) Substituting $tx$ for $x$ and letting $t$ go to infinity, we obtain the `holomorphic' part:
\beq\label{hol-part}
(-1)^q 2^{-q}\sum_{r=0}^{[q/2]}\sum_{\substack{I \\ |I| = 2r+1}} \sgn(\CC^{I}, x),
\eeq
where $\sgn(\emptyset ,x)=1$.  In the case of $x$ regular,  (\ref{pre-inductive}) implies that this must coincide with $I(S(\CC),D_x)$.
In fact, it is easily checked that (\ref{hol-part}) is equal to $\P_q^{\triangle}(x,\CC)$ for all $x$. 
Thus our theta integral is the non-holomorphic completion of the series 
$$\sum_{x\in \mu+L} \P_q^{\triangle}(x,\CC) \, \qq^{Q(x)}.$$
\end{rem}

\begin{proof}  The case $q=1$ is (\ref{tetra:q=1}). In the induction, we use the notation
$$\CC[j] = [C_{0\perp j}, \dots, C_{j-1 \perp j}, C_{j+1\perp j}, \dots, C_{q\perp j}].$$
Let $A=\{0,1,\dots,q\}$ and for a subset $I\subset A$, let 
$\CC^I$ be the collection of $q+1-|I|$ vectors obtained by omitting the $C_i$ with $i\in I$. 
Also denote by $I[j]$ a subset of $A[j]:=\{0,1, \dots, \hat{j}, \dots,q\}$. 

We have
\begin{align*}
I^0(x/\sqrt{2};\CC) - I(S(\CC),D_x){} &= \int_{\d S(\CC)} \Psi^0_{KM}(x/\sqrt{2})\\
\nass
{}&=\sum_{j=0}^q(x,\uC_j)\int_1^\infty e^{-\pi t^2 (x,\uC_j)^2}\, I^0( t x_{\perp j}/\sqrt{2};\CC[j])\, dt
\end{align*}
\begin{align*}
{}&=(-1)^q 2^{-q} \sum_{j=0}^q-2(x,\uC_j)\int_1^\infty e^{-\pi t^2 (x,\uC_j)^2}\, 
\sum_{r=0}^{[(q-1)/2]}\sum_{\substack{I[j] \subset A[j]\\ |I[j]| = 2r+1}} E_{q-1-2r}(\CC[j]^{I[j]}, t x_{\perp j})\,dt\\
\nass
{}&=(-1)^q 2^{-q} \sum_{r=0}^{[(q-1)/2]}\sum_{j\in A} \sum_{\substack{I \subset A\\ |I| = 2r+1\\ j\notin I}} -2(x,\uC_j)\int_1^\infty e^{-\pi t^2 (x,\uC_j)^2}\, 
 E_{q-1-2r}(\CC[j]^{I}; t x_{\perp j})\,dt
 \end{align*}
 \begin{align*}
{}&=(-1)^q 2^{-q} \sum_{r=0}^{[(q-1)/2]}\sum_{\substack{I \subset A\\ |I| = 2r+1}}  \sum_{\substack{j\in A\\ j\notin I}} -2(x,\uC_j)\int_1^\infty e^{-\pi t^2 (x,\uC_j)^2}\, 
 E_{q-1-2r}((\CC^{I})[j]; t x_{\perp j})\,dt\\
\nass
{}&=(-1)^q 2^{-q} \sum_{r=0}^{[(q-1)/2]}\sum_{\substack{I \subset A\\ |I| = 2r+1}}\big(\    E_{q-2r}(\CC^{I}; x)  - \sgn(x,\CC^I)\ \big)\\
\nass
{}&=(-1)^q 2^{-q} \sum_{r=0}^{[q/2]}\sum_{\substack{I \subset A\\ |I| = 2r+1}}E_{q-2r}(\CC^{I}; x)\\
\nass
{}&\qquad\qquad\qquad- (-1)^q 2^{-q}\sum_{r=0}^{[(q-1)/2]}\sum_{\substack{I \subset A\\ |I| = 2r+1}} \sgn(x,\CC^I)  - (-1)^q 2^{-q}\,\delta_{q,\text{even}}.
\end{align*}
Thus, to finish the proof, we note that 
\beq\label{int-num-conj}
I(S(\CC),D_x)= (-1)^q 2^{-q}\sum_{r=0}^{[q/2]}\sum_{\substack{I \subset A\\ |I| = 2r+1}} \sgn(x,\CC^I),
\eeq
where we use the convention that $\sgn(x,\emptyset)=1$. 
Here recall that we are assuming that $x$ is regular with respect to $\CC$. 
To check this, observe that 
\begin{align*}
(-1)^q2^{-q}\sum_{r=0}^{[q/2]}\sum_{\substack{I \subset A\\ |I| = 2r+1}} \sgn(x,\CC^I)&= (-1)^q2^{-q}\sum_{\substack{J\subset A\\ |J|\equiv q (2)}} \prod_{j\in J}\s_j\\
\nass
{}&= (-1)^q 2^{-q-1}\bigg(\prod_{j\in A}(1+\s_j) + (-1)^q\prod_{j\in A}(1-\s_j)\ \bigg).
\end{align*}
\end{proof}


\section{An example}
In this section, we write out a very simple example, which illustrates the relation between the (degenerate) cubical formula and the 
simplicial formula in the case $q=2$.

Let $\mathcal A = \{A_0,A_1,A_2\}$ be the data for a $2$-simplex. The vertices are:
$$z_0 = \sspan\{A_1,A_2\}_{\text{p.o.}}, \quad z_1=\sspan\{-A_0,A_2\}_{\text{p.o.}}, \qquad z_2 = \sspan\{A_0,A_1\}_{\text{p.o.}},$$ 
and the theta integral is
$$\frac14\big(\ E_2(A_1,A_2)+E_2(A_0,A_2)+E_2(A_0,A_1)+1\ \big).$$

We can consider the related cubical data $\CC = \{\{C_1,C_{1'}\},\{C_2,C_{2'}\}\}$, where
$$C_1=A_0, \ 
C_2=A_1,\ 
C_{2'}=-A_2, \ 
C_{1'}=C_{2'}-C_2 = -A_1-A_2,
$$
so that the associated (degenerate) $2$-cube has vertices
$$z_2=\{C_1,C_2\}, \ z_1=\{C_1,C_{2'}\},\ z_0=\{C_{1'},C_{2'}\}=\{C_{1'},C_2\},$$
and theta integral
%
\begin{align*}&\frac14\big(\ E_2(C_1,C_2)-E_2(C_1,C_{2'})-E_2(C_{1'},C_2)+E_2(C_{1'},C_{2'})\ \big)\\
\nass
{}=&\frac14\big(\ E_2(A_0,A_1)+E_2(A_0,A_2)+E_2(A_1+A_2,A_1)+E_2(A_1+A_2,A_2)\ \big).\\
\end{align*}
Coincidence of the two theta integrals is the equivalent to the identity
$$E_2(A_1+A_2,A_1)+E_2(A_1+A_2,A_2) = E_2(A_1,A_2)+1,$$
where all terms are given by integrals over the negative $2$-plane $z_0$. 
Writing $y\in z_0$ as $y= a A_1^\vee+bA_2^\vee$, with respect to the dual basis, 
and noting that
$$\sgn(a+b)(\sgn(a)+\sgn(b)) = \sgn(a)\sgn(b) +1,$$
for $a$ and $b$ not both $0$, the identity follows.

\section{Appendix I: Some proofs and details}\label{append-2}

\subsection{Proof of part (iii) of Lemma~\ref{old-lemma}}
Suppose that $\CC$ is in good position and that $x\in V$ with 
$\P_q(x;\CC)\ne 0$. Let $s_0 = s(x)$ be the unique point of $[0,1]^q$ such that $\rho_\CC(s_0) = D_x\cap S(\CC)$. 
Note that the map $\rho_\CC$ extends to an open neighborhood of $[0,1]^q$ so that, even if $s_0$ lies on the boundary, we can define $\rho_\CC$ 
on an open set $\mathcal U$ around $s_0$.  We lift $\rho_\CC$ to a map $\tilde\rho_\CC:\mathcal U \rightarrow \OFD$, 
defined by 
$$\tilde \rho_\CC: s\mapsto \zeta(s) = B(s) P^{-1}, \qquad P\in \Sym_q(\R)_{>0}, \quad P^2 = -(B(s),B(s)).$$
For convenience, we write $B=[B_1,\dots, B_q]=B(s)$. 
Then
$$(\tilde\rho_\CC)_*(\frac{\d}{\d s_j}) = \dot B_j P^{-1} - \zeta \dot P_j P^{-1},\qquad \dot B_j := \frac{\d}{\d s_j} B= [0,\dots, -C_j+C_{j'},\dots, 0], \quad \dot P_j := \frac{\d}{\d s_j} P.$$
The components in the connection subspace $U(z)^q$ of $T_\zeta(\OFD)$ are then
$$(\rho_\CC)_*(\frac{\d}{\d s_j})= \tau_j\,P^{-1}, \qquad  \tau_j =[0,\dots, \pr_{U(z)}(-C_j+C_{j'}),\dots, 0] $$
and these are linearly independent provided $\pr_{U(z)}(-C_j+C_{j'})\ne0$ for all $j$. 
But at the point $z_0=\rho_\CC(s_0)$, we have $x\in U(z_0)$, and the $q$ vectors 
$$\eta(x,j)=[0,\dots,0,,x,0,\dots,0],$$
with $x$ in the $j$th component, span the normal to $T_{z_0}(D_x)$. Note that the metric $g$ on $T_z(D)\simeq U(z)^q$ 
is given by 
$$g(\eta,\eta') = \tr( \,(\eta_i,\eta'_j)\,).$$
Then we have  
$$
g\big(\eta(x,i),\tau_j\big) = [\ (x,C_{j'}) - (x,C_j)\ ]\,\delta_{ij}.
$$

This shows that $\tau_j\ne0$ for all $j$ and hence $\rho_\CC$ is immersive at $s(x)$. We can choose the open neighborhood $\mathcal U$ of $s(x)$ in $\R^q$ 
so that the restriction of $\rho_\CC$ to $\mathcal U$ is an embedding. 
The orientation of the codimension $q$ cycle $D_x$ is defined by an element of $\nu_{z,x}\in \bbigwedge^{(p-1)q}T_z(D_x)$
such that 
$$\nu_x\wedge \nu_{z,x}\in \sideset{}{^{pq}}\bbigwedge(T_z(D))$$
is properly oriented, where 
$$\nu_x = \eta(x,1)\wedge \dots \wedge\eta(x,q).$$
Here we have fixed an orientation of $D$. 
Thus the intersection number at $z_0$ of $D_x$ with $\rho_\CC(\mathcal U)$ is 
$$I(D_x,\rho_\CC(\mathcal U)) = \sgn \det( g(\eta(x,i),\tau_j)) = \prod_j \sgn\big(\ (x,C_{j'}) - (x,C_j)\ \big).$$
If $x$ is regular with respect to $\CC$, then this quantity is 
$$2^{-q}\prod_{j=1}^q \big(\ \sgn(x, C_{j'}) - \sgn(x,C_j)\ \big) = (-1)^q \P_q(x;\CC).$$
In general, we have 
\beq\label{def-inter}
(-1)^q\P_q(x;\CC)  = 2^{-r}\,I(D_x,\rho_\CC(\mathcal U)),
\eeq
where $r$, $0\le r\le q$, is the number of walls passing through $s(x)$. Thus, $\P_q(x;\CC)$ is a `weighted' intersection number.

\subsection{ Proof of (\ref{naza-1})}


\newcommand{\lgs}{(\!(}
\newcommand{\rgs}{)\!)}

For $y$, $y'\in Z= \sspan\{C\}$, we write $\lgs y,y'\rgs = -(y,y')$,  and we assume that $x\in Z$.
We let 
$$C^\vee = [C_1^\vee, \dots, C_q^\vee] = C \lgs C,C\rgs^{-1}$$
be the dual basis. We write 
$$x= \sum_i x_i\,C_i^\vee, \qquad x_i = \lgs x,C_i\rgs .$$
For a fixed index $j$, we write
$$x = x_{\perp j} + x'\,C_j,\qquad  x_{\perp j} = \sum_{i\ne j} x_i C_i^\vee,\qquad x_j=\lgs x, C_j\rgs  = x' \lgs C_j, C_j\rgs,$$
and similarly for our variable of integration $y\in Z$. Note that, in particular, 
$$\sgn\lgs y,C_j\rgs = \sgn(y').$$
We can write 
$$dy = dy_{\perp j} \, dy'$$ 
where
$$1=\int_{Z} e^{-\pi \lgs y,y\rgs}\,dy = \int_{Z_{\perp j}}\int_{\R} e^{-\pi \lgs y_{\perp j},y_{\perp j}\rgs} \,e^{-\pi (y')^2\lgs C_j,C_j\rgs}\,dy_{\perp j}\,dy' ,$$
where 
$dy'$ is $\lgs C_j,C_j\rgs^{\frac12} $ times Lebesque measure, so that 
$$\int_\R e^{-\pi (y')^2\lgs C_j,C_j\rgs}\,dy'  = 1.$$
We write\footnote{Note that the extra factor of $(-1)^q$ etc. is due to our temporary change in the sign of the inner product on $Z$, 
so that our $E_q$ differs from that in \cite{nazar} by this sign.}
\begin{align*}
(-1)^q E_q(C;x) &= \int_Z e^{-\pi\lgs y-x,y-x\rgs}\,\prod_i\sgn\lgs y,C_i\rgs\,dy\\
\nass
{}&= \int_{Z_{\perp j}}\int_{\R} e^{-\pi \lgs y_{\perp j}-x_{\perp j},y_{\perp j}-x_{\perp j}\rgs } \,e^{-\pi (y'-x')^2\lgs C_j,C_j\rgs}\,\prod_{i\ne j}\sgn\lgs y,C_i\rgs\,\sgn(y')\,dy_{\perp j}\,dy' \\
\nass
{}&= (-1)^{q-1}E_{q-1}(C[j];x_{\perp j}) \,\int_{\R} e^{-\pi (y'-x')^2\lgs C_j,C_j\rgs}\,\sgn(y')\,dy'\\
\nass
{}&= (-1)^{q-1}E_{q-1}(C[j];x_{\perp j}) \,\int_{\R} e^{-\pi (y')^2\lgs C_j,C_j\rgs}\,\sgn(y_j+x_j)\,dy' .
\end{align*}
But then, taking into account that $dy' = \lgs C_j,C_j\rgs^{-\frac12}\,d_{\text{Leb}}y_j$, we have
\begin{align*}
x_j\,\frac{\d }{\d x_j}\big\{\,(-1)^q E_q(C;x)\,\big\} &= \lgs x,C_j\rgs\,(-1)^{q-1}E_{q-1}(C[j];x_{\perp j})\\
\nass
&\qquad\qquad\times \,\int_{\R} e^{-\pi y_j^2\lgs C_j,C_j\rgs^{-1}}\,2\delta(y_j+x_j)\,\lgs C_j,C_j\rgs^{-\frac12}\,d_{\text{Leb}}y_j\\
\nass
{}&=2\lgs x,\uC_j\rgs (-1)^{q-1} \,E_{q-1}(C[j];x_{\perp j})\,e^{-\pi \lgs x,\uC_j\rgs^2}.
\end{align*}
Here recall that $\uC_j = C_j \lgs C_j,C_j\rgs^{-\frac12}$.  Summing on $j$, we obtain (\ref{naza-1}).

\end{document}